\documentclass[10pt]{amsart}

\usepackage{amsmath, amssymb, euscript,  enumerate}

\newcommand{\R}{\mathbb R}

\newcommand{\B}{\mathcal B}

\newcommand{\Pc}{\mathcal P}
\newcommand{\T}{\mathcal T}
\newcommand{\e}{\varepsilon}
\newcommand{\te}{\theta}

\newcommand{\al}{\alpha}

\newcommand{\la}{\lambda}
\newcommand{\vp}{\varphi}

\newcommand{\D}{\nabla}
\newcommand{\ra}{\rightarrow}
\newcommand{\La}{\triangle}

\newtheorem{thm}{Theorem}[section]
\newtheorem{lem}[thm]{Lemma}
\newtheorem{prop}[thm]{Proposition}
\newtheorem{corollary}[thm]{Corollary}
\newtheorem{definition}[thm]{Definition}

\theoremstyle{remark}

\numberwithin{equation}{section}

\begin{document}

\title{Fully Degenerate Monge Amp\'ere Equations}

\author{Panagiota Daskalopoulos}

\address{
Department of Mathematics, Columbia University, New  York,
USA
}
\author{Ki-ahm Lee}

\address{
Department of Mathematics, Seoul National University, Seoul, Korea}
\begin{abstract}
In this paper, we consider the following nonlinear eigenvalue
problem  for the Monge-Amp\'ere equation:  find a non-negative weakly convex
classical solution $f$  satisfying
\begin{equation*}
\begin{cases}
\det D^2 f=f^p  \quad &\text{in $\Omega$}\\
f=\vp  \quad &\text{on $\partial\Omega$}
\end{cases}
\end{equation*}
for a strictly convex smooth domain $\Omega\subset\R^2$ and $0<p<2$.
When $\{ f=0\}$ contains a convex domain, we find a classical solution
which is smooth on $\overline{\{f>0\}}$ and whose free
boundary $\partial\{f=0\}$ is also smooth.
\end{abstract}
\thanks{Keywords: Monge Amp\'ere Equations, Degenerate Equations, Fully Nonlinear
Equations, Nonlinear Eigen Value Problems} 
\maketitle




\section{Introduction}
\label{intro}
We consider in this  work the following nonlinear eigenvalue
problem  for the Monge-Amp\'ere equation:  find a non-negative weakly convex
classical solution $f$  satisfying
\begin{equation}\label{eqn-f}
\begin{cases}
\det D^2 f=f^p  \quad &\text{in $\Omega$}\\
f=\vp  \quad &\text{on $\partial\Omega$}
\end{cases}
\tag{MA}
\end{equation}
for a strictly convex bounded smooth domain $\Omega\subset\R^2$,
with $\vp>0$ on $\partial\Omega$ and smooth, and $0<p<2$.

The study of problem (MA) is motivated by the general Minkowski problem
in differential geometry,  asking to find the manifold whose Gauss
curvature has been prescribed. More generally,  the Gauss curvature
itself may depend on the graph $z=f(P)$ of the manifold, namely
$$\det D^2 f(P)=h(P,f(P),\D f(P)).$$
For a  positive bounded $h$, this problem  has been discussed by many
authors  and the   $C^{1,1}$-regularity
of $f$ has been  established (c.f. in  \cite{GT}).   When $h$ is allowed to be zero, $f$ 
is not always a  $C^{1,1}$ function, as it will be discussed in the sequel. The regularity of $f$
is an  open problem (c.f. Aubin \cite{Au}). 

One  of the interesting 
cases is when $h=0$ on the vanishing set of $f$, especially when  $h=f^p$, as  in problem \eqref{eqn-f}. 
For   $p=1$, this problem  corresponds  to an eigenvalue problem
describing the asymptotic
behavior, as $t \to T$,  of the parabolic Monge-Amp\'ere equation
\begin{equation*}
\begin{cases}
f_t=\det D^2 f\quad &\text{in $\Omega \times (0,T)$}\\
f=\eta(t)\, \vp\quad &\text{on $\partial\Omega$}
\end{cases}
\end{equation*}
where $\eta(t)=1/(T -  t)$. 

For  $f<0$, $f=0$ on $\partial\Omega$ and  $h=(-f)^{-(n+2)}$ in $\Omega$,  problem \eqref{eqn-f} 
was  considered by Cheng  and Yau in  \cite{CY}. When,  $h=(-\lambda f)^{n}$ problem 
\eqref{eqn-f} corresponds to the eigenvalue problem for the
 concave operator $(\det D^{2}f)^{\frac{1}{n}}$  and has been studied  in 
\cite{Li} . The exponential nonlinearity, $h=e^{-2f}$ has been studied by Cheng and Yau in  \cite {CY2}. 
Equation $\det D^2 f =h$  with a degenerate source term $h$  has been studied at \cite{G}.
The limiting case $p\ra 0^+$, $f(x)^p\ra \chi_{\{f>0\}}$  
was  considered by
O. Savin, \cite{S} as  the obstacle problem for Monge-Amp\'ere equation, 
where the obstacle stays below the graph of $f$. The second author also
considered  the case
where  the obstacle stays above the graph of $f$. 

Since $f(x)^p\ra \chi_{\{f>0\}}$ as $p\ra 0^+$,
(\ref{eqn-f}) corresponds to  a perturbation problem for the obstacle problem
$$\det D^2 f=\chi_{\{ f >0 \}}$$ and $f\geq 0$ in $\Omega$.

\medskip

Depending on the boundary values $\vp$ one of the three
possibilities  may occur in (MA):

\begin{enumerate}[i.]
\item {\em $f>0$ in $\Omega$}: the equation (MA)
      is then strictly elliptic and by the  regularity theory
      of fully-nonlinear equations, $f$ is $C^\infty$ smooth
      in $\Omega$ (cf. \cite{CC}).
\item {\em $f \equiv 0$ on a convex sub-domain $\Lambda(f) \subset \Omega$}:
       equation (MA) becomes degenerate
       on $\Lambda(f)$ and  $\Gamma(f)=\partial \Lambda(f)$ is the free-boundary
       associated to this problem. The function $f$ is $C^\infty$ smooth
      on $\Omega (f)=\Omega\setminus \Lambda(f)$ (cf. \cite{CC}). 
      The optimal regularity of $f$ up to the interface will be discussed in
      this work. 
\item {\em $f(P_0)=0$ at a single point $P_0 \in \Omega$ and $f>0$,
      on $\Omega\setminus \{P_0\}$}: equation becomes
      degenerate at the point  $P_0$.  The function $f$ is $C^\infty$
      smooth on $\Omega\setminus \{ P_0\}$ (cf. \cite{CC}).
     However, the regularity of $f$ at $P_0$ is an open question. \end{enumerate}

\medskip

We will restrict our attention from now on to the case (ii) above, where the solution $f$
of \eqref{eqn-f}  vanishes on a domain $\Lambda (f)$. 
 
 By looking at
radial solutions $z=f(r)$ on $\Omega = B_2(0)$  which vanish on $B_1(0)$,
we find that the expected behavior of $f$ near the
interface $r=1$ is $f(r) \sim (r-1)_+^q$, with $q$ given in
terms of $p$ by $q = \frac{3}{2-p}.$
This motivates the introduction of the {\em pressure} function
\begin{equation}\label{eqn-prii}
g= q^{\frac 23} \, f^{\frac 1q} , \qquad q = \frac{3}{2-p}.
\end{equation}
A direct calculation shows that $g$ satisfies the problem 
\begin{equation}\label{eqn-g}
\begin{cases}
g\, \det D^2 g + \theta \,  (g_y^2 g_{xx} - 2 g_x g_y 
g_{xy} + g_x^2 g_{yy}) =\chi_{\{ \, g >0 \, \}} \quad &\text{in} \, \Omega\\
g=\bar\vp  \quad &\text{on} \, \partial \Omega
\end{cases} \tag{MAP}
\end{equation}
with
$$\theta  = \frac{1+p}{2-p}$$ and $\bar\vp(x)=q^{\frac 23}
\, \vp^{\frac 1q}$. One observes that $ \theta >0$ iff $p <2$ which explains
our assumption on $p$. 

A similar concept  of pressure plays an important role in obtaining the 
optimal regularity of solutions to another degenerate equation, this time parabolic,  the {\it porous medium equation}, namely the flow of a density function $f$ of a gas through a porous medium given  by 
\begin{equation}\label{eqn-pme}
f_t=\La f^m  \quad \mbox{on $\R^n$}.
\tag{PME}
\end{equation}
The corresponding pressure $g=f^{m-1}$ of the gas satisfies 
\begin{equation}
g_t=(m-1)g\, \La g+|\D g|^2, \qquad \mbox{on $\R^n$}.
\end{equation}
The pressure $g$ is more natural in terms of the regularity. For a classical solution, the expanding speed of the free boundary $\partial\Omega (g)=\partial\{g>0\}$ is $|\D g|$. 
If we observe that the free boundary expands with finite non-degenerate speed, $g$ grows linearly away from the free boundary 
       $\partial\Omega (g)$,  while the density $f$ grows like a H\"older function whose H\"older coeffcient depends on $m$, \cite{CVW}. The pressure $g$ is a kind of normalization of $f$. Then,  $g$ is Lipschitz on  $\R^n$ \cite{CVW}, \cite{CW}  and smooth on $\overline{\Omega (g)}$ 
\cite{DH1}, \cite{Ko}.

A pressure-like function $g=\sqrt{2f}$ for the  parabolic Monge Ampr\'e equation
\begin{equation}
f_t=\det D^2 f
\end{equation}
has also been shown to be Lipschitz globally and smooth on $\overline{\Omega (f)}$ in \cite{DH2}, \cite{DL1} and \cite{DL3}.

\medskip

Let us now turn our attention back to equation   \eqref{eqn-f}. 
Our  objective in this work is to establish  the existence of a classical solution $f$ of 
the problem    \eqref{eqn-f},  when the  boundary data $\varphi$ is such that the solution $f$ vanishes on a region $\Lambda(f) \subset \Omega$ and   therefore  the equation becomes degenerate near the interface $\Gamma(f)  = \partial \Lambda(f)$. The concept of a classical solution  will be discussed  in section \ref{sec-classic1}.  To guarantee that such vanishing region exists, we  
assume that there is a classical super-solution $\psi$ of \eqref{eqn-f} vanishing on  a non-empty domain $\Lambda(\psi) \subset \Omega$. In section \ref{sec-classic2} we will actually present an example which shows that this
is indeed possible.

\begin{thm}\label{theorem-main} 
 Assume  that $\Omega\subset\R^2$ is a strictly convex bounded smooth domain and let  $\vp \in C^2(\bar \Omega)$, $\vp>0$ on $\partial\Omega$  and $0<p<2$. Assume that  there is a classical super-solution $\psi$ of \eqref{eqn-f} vanishing on  a non-empty domain $\Lambda(\psi) \subset \Omega$. Then, there is a classical solution of \eqref{eqn-f} and its pressure $g$, given in terms of $f$ by  \eqref{eqn-prii},  is $C^\infty$ smooth on $\Omega(g)$ up to the interface  $\Gamma$. Consequently, $f$ enjoys the optimal regularity $f \in C^{k,\alpha}$ with 
$k=[ \frac 3{2-p} ]$, $\alpha = \frac 3{2-p} -k$,  
( $C^{k-1,1}$, if $k:=\frac 3{2-p}$ is an integer)  and the interface $\Gamma(g)$  is $C^\infty$ smooth.
\end{thm}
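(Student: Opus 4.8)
The plan is to construct the solution by a penalization/approximation scheme and then pass to the limit, using the super-solution $\psi$ as a barrier to force the vanishing region. First I would regularize the degenerate right-hand side: for $\eps>0$ replace $f^p$ by $(f+\eps)^p - \eps^p$ or, working at the level of the pressure $g$, replace the $\chi_{\{g>0\}}$ in \eqref{eqn-g} by a smooth increasing $\beta_\eps(g)$ with $\beta_\eps \equiv 0$ for $g\le 0$ and $\beta_\eps \equiv 1$ for $g\ge \eps$. By the classical existence theory for uniformly elliptic Monge--Amp\'ere equations (the solvability of the Dirichlet problem $\det D^2 f_\eps = F_\eps(f_\eps) > 0$ on a strictly convex smooth domain with smooth positive boundary data, cf.\ \cite{CC}, \cite{GT}), each regularized problem has a smooth strictly convex solution $f_\eps$. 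The super-solution $\psi$ and the maximum principle for Monge--Amp\'ere give $0 \le f_\eps \le \psi$ off $\Lambda(\psi)$ wherever comparison applies, and together with a sub-solution (e.g.\ the convex solution of $\det D^2 u = \sup F_\eps$ with the same boundary data, or simply a suitable paraboloid) one gets uniform $C^0$ and, by convexity plus boundary data, uniform Lipschitz bounds on $f_\eps$, hence on $g_\eps$.

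Next I would establish \emph{uniform} regularity of the pressures $g_\eps$ up to the (approximate) free boundary. The key structural point is that \eqref{eqn-g} is, in the pressure variable, a non-divergence equation of the form $g\,\det D^2 g + \theta\,\mathcal{M}(g,D^2g) = \beta_\eps(g)$ whose principal part degenerates only linearly at $\{g=0\}$; this is exactly the Monge--Amp\'ere analogue of the porous-medium pressure equation $g_t = (m-1)g\Delta g + |\nabla g|^2$ whose solutions are known to be smooth up to the free boundary \cite{DH1}, \cite{Ko}. I would run the standard program: (a) a non-degeneracy estimate $|\nabla g_\eps| \ge c > 0$ near the free boundary, obtained by comparison with explicit one-dimensional or radial profiles $\sim (\text{dist})_+$, using the super-solution $\psi$ to keep the free boundaries of the $f_\eps$ in a fixed compact annular region away from $\partial\Omega$ and away from the interior of $\Lambda(\psi)$; (b) uniform $C^{1,\alpha}$ and then (bootstrapping) $C^\infty$ interior estimates in $\Omega(g_\eps) = \{g_\eps > \eps\}$ from the uniform ellipticity there, quantitatively degenerating toward the interface; (c) a flatness-implies-smoothness argument at the free boundary: once $g_\eps$ is close to a linear function in a small ball straddling $\{g_\eps = 0\}$, the free boundary is $C^{1,\alpha}$, and then one hodograph-transforms (partial Legendre/von Mises transform) to straighten the free boundary and convert \eqref{eqn-g} into a uniformly oblique nonlinear boundary-value problem for the transformed pressure on a fixed half-ball, to which Schauder theory applies to give $C^\infty$ up to the boundary with $\eps$-independent bounds.

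With these uniform estimates in hand, I would pass to the limit $\eps \to 0$: by Arzel\`a--Ascoli, $f_\eps \to f$ and $g_\eps \to g$ in $C^{k,\alpha}_{loc}$ on $\overline{\Omega(g)}$ and in $C^\infty$ away from the interface, $f$ solves \eqref{eqn-f} classically, $g$ solves \eqref{eqn-g}, the free boundaries converge, and the limiting interface $\Gamma(g)$ inherits the $C^\infty$ regularity from the uniform estimates in step (c). Finally, the precise H\"older exponent for $f$ follows by unwinding the pressure substitution \eqref{eqn-prii}: $f = (q^{-2/3} g)^q$ with $q = 3/(2-p)$, and since $g$ is smooth and vanishes exactly to first order on $\Gamma$ with non-vanishing gradient, $f$ behaves like $(\mathrm{dist}(\cdot,\Gamma))_+^{\,q}$, giving $f \in C^{k,\alpha}$ with $k = [q]$, $\alpha = q - k$ (resp.\ $C^{k-1,1}$ when $q$ is an integer), and this is optimal because of that exact vanishing order.

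I expect the main obstacle to be step (b)--(c): obtaining the non-degeneracy lower bound $|\nabla g_\eps| \ge c$ and the \emph{$\eps$-uniform} smoothness up to the free boundary. The Monge--Amp\'ere structure couples the degeneracy of the leading term to the full Hessian (including the extra second-order term with coefficient $\theta$, which is precisely why $p<2$ is needed to keep $\theta>0$ and the equation well-signed), so the hodograph transform and the resulting oblique boundary condition are more delicate than in the porous-medium case; controlling the geometry of the approximate free boundaries uniformly — keeping them trapped in a fixed annulus by the barrier $\psi$ and showing they are uniformly convex or at least uniformly flat at small scales — is where the real work lies, and is likely the reason the two-dimensional hypothesis $\Omega \subset \R^2$ is imposed.
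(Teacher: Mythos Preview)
Your approach is genuinely different from the paper's and has a real gap at the first step. The paper does \emph{not} approximate by uniformly elliptic problems; it uses the method of continuity. Setting $\overline h = \det D^2\psi/\psi^p \le 1$, it considers the one-parameter family $\det D^2 f = ((1-t)\overline h + t)\, f^p$, so that at $t=0$ the given super-solution $\psi$ is an exact solution, and then shows the set of $t\in[0,1]$ admitting a classical solution with $B_\rho\subset\Lambda(f)$ is open (implicit function theorem after a global hodograph/straightening change of variables; the linearized operator is of the degenerate type \eqref{eqn-linear} for which Schauder theory is available) and closed (uniform $C^{2,\alpha}_s$ estimates). The closedness estimates are proved by maximum-principle arguments applied directly to a \emph{classical} solution $g\in C^4(\overline{\Omega(g)})$ of the degenerate pressure equation: one bounds $r^2|Dg|^2$ from above, $r^{-2q}g_r$ from below, then $g_\nu^2 g_{\tau\tau}$ and $\max_\gamma(g\,g_{\gamma\gamma}+\theta g_\gamma^2)$ from above, each via an explicit test quantity and the equation; these arguments use crucially that $B_\rho\subset\Lambda(g)$ (so $r\ge\rho$ on $\Omega(g)$), a hypothesis that is propagated along the continuity path by comparison with $\psi$.

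The gap in your scheme is that your proposed regularizations do not make the problem uniformly elliptic: both $(f+\eps)^p-\eps^p$ and $\beta_\eps(g)$ vanish at $f=0$ (resp.\ $g=0$), so the approximate equation is still degenerate on the zero set and you cannot invoke ``classical existence theory for uniformly elliptic Monge--Amp\`ere'' to produce $f_\eps$. If instead you regularize so that the right-hand side is bounded below by a positive constant, then $f_\eps$ is strictly convex and strictly positive, there is no free boundary at level $\eps$, and your ``approximate free boundary'' is undefined; you would have to track a moving level set $\{f_\eps=\delta(\eps)\}$ and prove uniform pressure estimates there, which is not what you wrote and is at least as hard as the paper's program. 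Moreover, the comparison $f_\eps\le\psi$ fails on $\Lambda(\psi)$ in that case, so you lose the mechanism that traps the vanishing region and guarantees $r\ge\rho$ on the support---and the paper's maximum-principle bounds (e.g.\ the lower bound on $|Dg|$ via $r^{-2q}g_r$) genuinely need that. Your identification of the main difficulty (uniform non-degeneracy $|\nabla g|\ge c$ and smoothness up to the interface via a hodograph transform) is correct and matches the paper's Section~4, but the existence mechanism you propose does not get you to a setting where those estimates can be applied.
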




\smallskip

A brief outline of the paper is as follows: in section \ref{sec-classical}   the concept of   
classical solutions     of \eqref{eqn-f}  is introduced  and  the proof of its existence via the method
of continuity is outlined.   Section \ref{sec-estimates}  will be devoted to the derivation  of sharp a'priori derivative
estimates for classical solutions of equation \eqref{eqn-g}. These estimates play  crucial 
role in establishing the $C^{2,\alpha}_s$ regularity of classical solutions of
\eqref{eqn-g}  which will
be shown in section \ref{sec-c2a}  (see in section \ref{subsec-linearized} for the definition of this space). Based on the estimates in section \ref{sec-c2a}, we will conclude,  in
section \ref{sec-stability},   the proof of the existence of a $C^{2,\alpha}_s$  up to the interface 
solution $g$  of \eqref{eqn-g} , via the method of continuity. We will also show that  the pressure $g$ is $C^\infty$ smooth up to the interface.

{\bf Notation: }
\begin{itemize}
\item $\Omega \subset \R^2$ denotes a {\em strictly convex}  bounded smooth domain in $\R^2$.
\item $\vp$ denotes a smooth {\em strictly positive}  function defined on $\partial\Omega$.
\item For any $g \geq 0$ on $\Omega$, we denote 
$$\Omega (g)=\{x \in \Omega \, |\, g(x)>0\}, \quad  \Lambda(g) = \{x \in \Omega \, |\, g(x)=0\}$$
 and   $\Gamma (g)=\partial \Lambda (g)$.
\item $ds^2$ denotes the singular metric defined in section \ref{subsec-linearized}.
\item $\|g\|_{C^2_{\partial \Omega}} = \max_{\partial \Omega} (|D_{ij} g|  + |D_i
g| + g)$.
\item $C^{2,\alpha}_{s}(\Omega)$ will be defined in section  \ref{subsec-linearized}
and $C^{k,2+\alpha}_{s}(\Omega)$ will be defined in section \ref{sec-stability}.
\item $\nu$, $\tau$ denote the outward normal and tangential directions to the
level sets of a function $g$. 
\item $g_{\nu}, g_{\tau}, g_{\nu\nu}, g_{\nu\tau}, g_{\tau\tau}$ denote the derivatives of $g$ with respect
to $\nu$, $\tau$. 
\end{itemize}

\bigskip


\section{Classical  solutions and the method of continuity}\label{sec-classical}

In this section we will  define the   concept of  a classical
solution of equation (\ref{eqn-f}) [ resp. of \eqref{eqn-g}]  and  sketch the proof of  its existence   via a method of
continuity.

\subsection{The concept of classical solutions and the comparison principle}\label{sec-classic1}

We consider the following generalization of  equation (\ref{eqn-g}),
namely  
\begin{equation}\label{eqn-g2}
g\, \det D^2 g + \theta  \, (g_y^2\, g_{xx} - 2\, g_x g_y \,
g_{xy} + g_x^2\, g_{yy}) =h\, \chi_{\{ \, g >0 \, \}}\tag{MAPh}
\end{equation}
where  $h \in C^2(\Omega)$ and satisfies the bounds
\begin{equation}\label{eqn-lambda}
 0< \lambda < h < \lambda^{-1} < \infty
 \end{equation}
for some constant $\lambda >0$. 

We recall the notation $\Omega (g)=\{x \in \Omega \, |\, g(x)>0\}$ and $\Lambda(g) = \Omega \setminus
\Omega (g)$. 
On the free-boundary $\Gamma(g):= \partial \Lambda (g)$, where $g=0$,   we
then have, from \eqref{eqn-g2},
$$\theta\, (g_{x}^2\,  g_{yy}-2g_x g_y g_{xy}+g_{y}^2\, g_{yy})=\theta\, g_{\nu}^2 g_{\tau\tau}
=\theta \, g_{\nu}^3  \kappa = h$$ where $\nu$ and $\tau$ are inward
normal and tangential unit directions to $\Gamma(g)$
respectively and where $\kappa=g_{\tau\tau}/g_\nu$ denotes the   curvature of $\Gamma(g)$. 

More generally, denote by $\nu$, $\tau$ the outward normal and tangential directions to the
level sets of the function $g$. 

\begin{definition}
We say that  $g \in C^2_s(\overline{ \Omega(g)})$ iff 
$$g, \, g_\nu, \, g_{\tau}, \, g\, g_{\nu\nu}, \, \sqrt{g}\, g_{\nu\tau}, \, g_{\tau\tau}$$
extend  continuously up to $\overline {\Omega(g)}$ and are bounded on $\Omega(g)$. 
\end{definition}

Define the non-linear operator
\begin{equation}\label{eqn-op}
 \mathcal{P} [g] :=g\det D^2 g+\theta(g_{x}^2
g_{yy}-2g_xg_yg_{xy}+g_{y}^2g_{yy}).
\end{equation}

\begin{definition} Assume that $g\in C^{0,1}(\Omega)\cap
C^{2}_{s}(\overline{\Omega (g)}  )$ and that  $f=\left(
q^{-\frac 23} \, g \right)^q$, $q=\frac{3}{2-p}$  is convex in $\Omega$. 
The function $g$ is called a classical super-solution (sub-solution) of  equation \eqref{eqn-g2} if
\begin{equation}
\begin{cases}
\mathcal{P}[g]  &\leq h \, ( \, \ge h  ) \quad
\text{in $\Omega(g)$}\cr
\Pc [g]=\theta g_{\nu}^3 \, \kappa &\leq h\, (  \ge h )  \quad
\text{on $\Gamma(g)$}.
\end{cases}
\label{eqn-c-g}
\end{equation}
The function $g$ is called a  classical solution if it is both a classical super-solution  and sub-solution.
\end{definition}

If $g$ satisfies \eqref{eqn-g2},  then the  corresponding convex function $f=\left(
q^{-\frac 23} \, g \right)^q$, with $q=\frac{3}{2-p}$, satisfies the equation
\begin{equation}\label{eqn-f2}
\det D^2 f= h  f^p.
\tag{MAh}
\end{equation}

\begin{definition}
 A convex function $f$ is called a classical super-solution, sub-solution,  or solution of \eqref{eqn-f2} if
      the correponding pressure $g$ belongs to  $C^{0,1}(\Omega)\cap
C^{2}_{s}(\overline{\Omega (g)})$ and is a  classical  super-solution, sub-solution, or solution of \eqref{eqn-g2} respectively.
\end{definition}

\begin{lem}  Let $g_1$ be a classical  super-solution and $g_2$ be a classical sub-solution of
\eqref{eqn-g2} such that $g_2 < g_1$ on $\partial\Omega$.
Assuming  that $\Omega(g_2)\subset\Omega (g_1)$, we have $g_2\leq g_1$
in $\Omega$.
\end{lem}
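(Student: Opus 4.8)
My plan is to pass to the convex functions $f_i := (q^{-\frac 23}\,g_i)^q$, $q=\frac{3}{2-p}$, for which (MAPh) becomes $\det D^2 f = h\,f^p$. Since $t\mapsto t^q$ is strictly increasing on $[0,\infty)$ and $q^{-\frac 23}>0$: the conclusion $g_2\le g_1$ in $\Omega$ is equivalent to $f_2\le f_1$ in $\Omega$; the hypothesis $g_2<g_1$ on $\partial\Omega$ becomes $f_2<f_1$ on $\partial\Omega$; and, because $\Omega(g_i)=\{f_i>0\}$, the hypothesis $\Omega(g_2)\subset\Omega(g_1)$ becomes $\{f_2>0\}\subset\{f_1>0\}$. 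By the definition of classical super/sub-solution, each $f_i$ is convex and lies in $C(\bar\Omega)\cap C^2(\{f_i>0\})$; moreover, on $\{g_i>0\}$ the quantities $\det D^2 f_i - h\,f_i^{\,p}$ and $\mathcal P[g_i]-h$ have the same sign (they differ by a strictly positive multiplicative factor, by the computation behind the equivalence of (MAPh) and (MAh)), so $\det D^2 f_1\le h\,f_1^{\,p}$ on $\{f_1>0\}$ and $\det D^2 f_2\ge h\,f_2^{\,p}$ on $\{f_2>0\}$.

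Next I run an interior maximum principle. Assume, for contradiction, that $f_2>f_1$ somewhere in $\Omega$. Since $f_1,f_2\in C(\bar\Omega)$ and $f_2<f_1$ on $\partial\Omega$, the difference $f_2-f_1$ attains a strictly positive maximum at some interior point $x_0\in\Omega$. At $x_0$ we have $f_2(x_0)>f_1(x_0)\ge 0$, hence $f_2(x_0)>0$, so $x_0\in\{f_2>0\}\subset\{f_1>0\}$ and therefore $f_1(x_0)>0$ as well. Thus $f_1$ and $f_2$ are positive and $C^2$ in a neighbourhood of $x_0$, and the first- and second-order conditions at an interior maximum give $\nabla f_2(x_0)=\nabla f_1(x_0)$ and $D^2 f_2(x_0)\le D^2 f_1(x_0)$.

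I then combine two facts at $x_0$. First, convexity of $f_1$ and $f_2$ gives $0\le D^2 f_2(x_0)\le D^2 f_1(x_0)$, and since the determinant is monotone on the cone of symmetric positive semidefinite $2\times 2$ matrices, $\det D^2 f_2(x_0)\le \det D^2 f_1(x_0)$. Second, from $0<\lambda<h$, $p>0$ and $f_2(x_0)>f_1(x_0)>0$,
\[
\det D^2 f_2(x_0)\;\ge\;h(x_0)\,f_2(x_0)^{\,p}\;>\;h(x_0)\,f_1(x_0)^{\,p}\;\ge\;\det D^2 f_1(x_0),
\]
which contradicts the previous inequality. Hence $f_2\le f_1$, i.e. $g_2\le g_1$, in $\Omega$. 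Observe that the free-boundary conditions in the definition of classical super/sub-solution are never invoked: the assumption $\Omega(g_2)\subset\Omega(g_1)$ is precisely what confines the offending maximum to the common positivity set $\{f_1>0\}\cap\{f_2>0\}$, on which (MAPh) is a genuine (degenerate) elliptic equation. I expect no real obstacle here; the one spot deserving a careful line is the translation between the $g$- and $f$-variables — in particular, that $C^2_s$ regularity of $g_i$ up to $\Gamma(g_i)$ yields honest $C^2$ regularity of $f_i$ on $\{f_i>0\}$, which holds because on compact subsets of $\{g_i>0\}$ the weights $g_i$ and $\sqrt{g_i}$ are bounded below.
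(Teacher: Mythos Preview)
Your argument is correct and is genuinely different from the paper's. The paper works directly with the pressure variables $g_i$ and the operator $\mathcal P$: it perturbs to $g_2^\varepsilon=(1+\varepsilon)g_2$ to manufacture a \emph{strict} sub-solution, and then treats the touching point $P_0$ in two cases --- $P_0\in\Omega(g_2^\varepsilon)$, where the ellipticity of $\mathcal P$ gives the contradiction, and $P_0\in\Gamma(g_2^\varepsilon)$, where the paper invokes the free-boundary condition $\theta g_\nu^3\kappa\lessgtr h$ and compares curvatures. Your route, by contrast, passes to the convex functions $f_i$ and exploits two features that become transparent there: (i) at an interior maximum of $f_2-f_1$ one has $0\le D^2f_2\le D^2f_1$, so monotonicity of the determinant on the PSD cone replaces any ellipticity discussion; and (ii) the strict monotonicity of $t\mapsto h\,t^p$ (since $h>0$, $p>0$) supplies the strict inequality without an $\varepsilon$-perturbation. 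You also correctly observe that the hypothesis $\Omega(g_2)\subset\Omega(g_1)$ forces the offending maximum into $\{f_1>0\}\cap\{f_2>0\}$, so the free-boundary part of the definition is never used. The trade-off: the paper's argument stays in the $g$-variables and shows how the boundary condition enters (which is conceptually important for the subsequent Theorem \ref{theorem-c-comparison}, where the $\varepsilon$-sliding does hit the free boundary), while your argument is shorter and more elementary for this particular lemma. The only point worth writing out explicitly is that $g_i\in C^2_s(\overline{\Omega(g_i)})$ indeed gives $g_i\in C^2$ on compact subsets of $\Omega(g_i)$ (the weights $g_i$, $\sqrt{g_i}$ are bounded away from zero there), hence $f_i\in C^2$ near $x_0$; you flagged this yourself, and it is fine.
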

\begin{proof}
Choose $\e>0$ sufficiently small so that  $g^{\e}_2:=(1+\e)g_2<g_1$ on
$\partial\Omega$ and $g^{\e}_2$ is   a strict sub-solution of
(\ref{eqn-g2}). We claim that $g^{\e}_2:=(1+\e)\, g_2 \leq g_1$ in $\Omega$. 

Indeed, let us assume that $g^{\e}_2$ touches $g_1$ at a
point $P_0$.  If  $P_0\in\Omega(g^{\e}_2)$, then 
$$h(P_0)\geq
\mathcal{P} [g_1](P_0)\geq \mathcal{P} [g^{\e}_2](P_0)>h(P_0)$$  which is a
contradiction.

Hence, we may assume that 
$P_0\in\partial\Omega(g^{\e}_2)$. Clearly  $\partial\Omega
(g^{\e}_2)$ will also touch $\partial\Omega (g_1)$ at $P_0$. Then, at $P_0$, we  have $(g^{\e}_2)_{\nu}\leq (g_1)_{\nu}$ and
$\kappa_2\leq\kappa_1$,  where $\kappa_1, \kappa_2$ denote the  curvatures of
$\partial\Omega (g_1)$, $\partial\Omega
(g^{\e}_2)$ respectively. Thus at $P_0$ 
$$(g_1)_{\tau\tau}=(g_1)_{\nu} \, \kappa_1\geq (g^{\e}_2)_{\nu}\, \kappa_2
=(g^{\e}_2)_{\tau\tau}$$
and then
$$\frac{h}{\theta}=(g_1)_{\nu}^2(g_1)_{\tau\tau}\geq (g^{\e}_2)_{\nu}^2
(g^{\e}_2)_{\tau\tau} =\frac{(1+\e)h}{\theta}$$ which is a
contradiction. This finishes the proof of our claim. 

Since  $(1+\e)\, g_2 = g^{\e}_2\leq g_1$ for
any small $\e>0$, letting  $\e\ra 0$ we conclude that  $g_2(P)\leq g_1(P)$.
\end{proof}

\begin{thm}[Comparison Principle for Classical Solutions] 
\label{theorem-c-comparison} Let $g_1$ be a classical super-solution and $g_2$ be a classical sub-solution of
\eqref{eqn-g2} such that $g_2\leq g_1$ on $\partial\Omega$. Assuming  that $\Omega(g_2)\subset\Omega (g_1)$, we have $g_2\leq g_1$
in $\Omega$.
\end{thm}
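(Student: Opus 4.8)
The plan is to upgrade the strict inequality version just proved (the preceding Lemma) to the non-strict boundary hypothesis $g_2 \le g_1$ on $\partial\Omega$ by an approximation argument. First I would introduce, for $\delta > 0$ small, the perturbed function $g_1^\delta := g_1 + \delta w$, where $w$ is a fixed smooth strictly positive function on $\overline\Omega$ chosen so that $g_1^\delta$ remains a classical super-solution of \eqref{eqn-g2} (or at least a super-solution of a slightly perturbed equation with right-hand side $h_\delta$ still satisfying \eqref{eqn-lambda}). The simplest choice is $w \equiv 1$, or $w$ a suitable quadratic; the point is that adding a positive constant to $g_1$ strictly increases it on $\partial\Omega$, so that $g_2 < g_1 \le g_1^\delta$ on $\partial\Omega$, and one must check that the operator $\mathcal{P}$ applied to $g_1 + \delta$ still satisfies $\mathcal{P}[g_1^\delta] \le h$ in $\Omega(g_1^\delta)$ and the free-boundary condition $\theta (g_1^\delta)_\nu^3 \kappa \le h$ on $\Gamma(g_1^\delta)$ — for $w \equiv 1$ the first term $g\det D^2 g$ only increases, so one needs instead to perturb $h$ upward or multiply $g_1$ by $(1+\delta)$ as in the Lemma's proof; I expect the cleanest route is $g_1^\delta := (1+\delta)g_1$, for which the computations in the previous Lemma already show the relevant inequalities become strict.

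Next I would verify that the inclusion hypothesis $\Omega(g_2) \subset \Omega(g_1)$ is preserved: since $\Omega(g_1^\delta) = \Omega(g_1) \supset \Omega(g_2)$ when $g_1^\delta$ is a positive multiple of $g_1$, this is immediate. With $g_2 < g_1^\delta$ strictly on $\partial\Omega$ and $\Omega(g_2) \subset \Omega(g_1^\delta)$, the previous Lemma (the strict-boundary comparison) applies and gives $g_2 \le g_1^\delta$ in $\Omega$. Finally, letting $\delta \to 0^+$ yields $g_2 \le g_1$ in $\Omega$, which is the assertion.

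The main obstacle I anticipate is not the limiting argument but ensuring that the perturbation $g_1^\delta$ genuinely remains a classical super-solution in the precise sense of the definition — in particular that the free-boundary inequality $\mathcal{P}[g_1^\delta] = \theta (g_1^\delta)_\nu^3 \kappa \le h$ on $\Gamma(g_1^\delta)$ continues to hold. Scaling $g_1 \mapsto (1+\delta)g_1$ changes $g_\nu$ by a factor $(1+\delta)$ and leaves the free boundary $\Gamma$ and its curvature $\kappa$ unchanged, so the left side of that condition scales by $(1+\delta)^3 > 1$; thus one cannot simply use $(1+\delta)g_1$ on the super-solution side. The correct device is to instead shrink $g_2$: apply the previous Lemma with the sub-solution $(1-\e)g_2$ for small $\e > 0$, or equivalently note the Lemma as stated already handles a sub-solution strictly below $g_1$ on the boundary after the rescaling $g_2 \mapsto (1+\e)g_2$ used there, and simply observe that the hypothesis $g_2 \le g_1$ on $\partial\Omega$ together with $\e > 0$ gives $(1+\e)g_2 < (1+\e)g_1$; one then needs a positive gap. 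So in fact the honest argument is: pick a point where $g_1 > 0$ on $\partial\Omega$ (true since $\vp > 0$), conclude $g_1 \ge c > 0$ on $\partial\Omega$ by compactness, and for $\e$ small enough $(1+\e)g_2 \le g_2 + \e \max g_2 < g_1$ fails in general — the real fix is that the Lemma already proved the case $g_2 < g_1$ on $\partial\Omega$ via the rescaling, and for the non-strict case one applies that Lemma to the pair $(g_1, (1-\e)g_2)$, noting $(1-\e)g_2 \le (1-\e)g_1 < g_1$ on $\partial\Omega$ provided $g_1 > 0$ there (ensured since $g_1 = \bar\vp > 0$ on $\partial\Omega$), obtaining $(1-\e)g_2 \le g_1$ and letting $\e \to 0$. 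I would present this last version, as it sidesteps having to re-verify super-solution status of any perturbation of $g_1$.
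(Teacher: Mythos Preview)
Your final approach has a genuine gap. You correctly observe that the operator $\mathcal P$ is homogeneous of degree $3$, so that scaling $g_1\mapsto(1+\delta)g_1$ destroys the super-solution property because the free-boundary quantity $\theta g_\nu^3\kappa$ scales by $(1+\delta)^3>1$. But the very same homogeneity kills your proposed fix: for $(1-\e)g_2$ one has
\[
\mathcal P[(1-\e)g_2]=(1-\e)^3\,\mathcal P[g_2]\ge(1-\e)^3 h,
\]
which is \emph{not} $\ge h$. Thus $(1-\e)g_2$ is \emph{not} a classical sub-solution, and the preceding Lemma cannot be invoked with the pair $(g_1,(1-\e)g_2)$. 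The symmetry you exploited to rule out perturbing $g_1$ applies equally to the down-scaling of $g_2$.

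The paper's device is different: it sets $g_2^\e:=(1+\delta_\e)(g_2-\e)_+$. Subtracting the constant $\e$ leaves $Dg_2$ and $D^2g_2$ unchanged while shrinking the positivity set to $\{g_2>\e\}$; the subsequent up-scaling by $(1+\delta_\e)$ then makes $g_2^\e$ a \emph{strict} sub-solution. Because $g_2\le g_1$ on $\partial\Omega$ and both are bounded below there, a suitably small $\delta_\e$ (depending on $\e$) ensures $g_2^\e<g_1$ on $\partial\Omega$. One then runs a sliding argument: for large $\e$ the inclusion $\Omega(g_2^\e)\subset\Omega(g_1)$ is automatic, and decreasing $\e$ continuously, a first touching point would contradict the strict sub-solution property exactly as in the Lemma's proof. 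The key idea you are missing is this additive shift, which creates the boundary gap without the bad multiplicative scaling on the sub-solution side.
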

\begin{proof}
The function $g^{\e}_2=(1+\delta_{\e})\, (g_2-\e)_+$ is also a
strict sub-solution of (\ref{eqn-c-g}) such that $g^{\e}_2< g_1$
on $\partial\Omega$, for a small $\delta_{\e}$ depending $\e$. For
large $\e>0$, $\Omega(g^{\e}_2)\subset\Omega (g_1)$ and let $\e$
decrease to zero. If $g_2\nleq g_1$ in $\Omega$, there is a
positive $\e>0$ such that $g^{\e}_2$
 touches $g_1$ at a point $P_0\in\overline{\Omega(g^{\e}_2)}$.
The same argument  as in the lemma above shows that   $P_0$ can not be a point
 in $\Omega (g^{\e}_2)$, since $g^{\e}_2$ is a strict sub-solution.
Also  $P_0 \notin\partial\Omega(g^{\e}_2)$; 
otherwise $\partial\Omega(g^{\e}_2)$ would  touch $\partial\Omega (g_1)$, 
  for $\e>0$, which leads to  contradiction similarly as  in the proof of the  previous lemma.
\end{proof}

\subsection{The linearized operator near the free-boundary and sharp a' priori estimates}\label{subsec-linearized}
In sub-section \ref{sec-classic2}  we will outline the proof of the  existence of a classical solution of problem \eqref{eqn-g}
via the method of continuity. 
Our approach  relies on the observation that one can  obtain sharp
a priori estimates for classical solutions $g$ of the degenerate equation \eqref{eqn-g2}
if one scales the estimates according to the natural   singular metric corresponding to problem .

To illustrate this better, assume that $g$ is a classical solution of equation  \eqref{eqn-g2} and 
that $P_0 \in \Gamma(g)$ is a free-boundary point.
We will show in section 3, that $g$ satisfies the a'priori bounds \eqref{eqn-grad0} and
\eqref{eqn-matrix30}  near the free-boundary, which in particular imply the bound 
$$ c < |Dg| \leq c^{-1}$$ for some $c >0$. We may assume,  without loss of generality,  that 
$g_x >0$, $g_y=0$ at $P_0$
so that it is possible  to solve  the equation
$z=g(x,y)$ near $P_0$ with respect to $x$ yielding  to a map
$x=q(z,y)$ defined for all $(z,y)$ sufficiently close to $Q_0=(0,y_0)$.
The function $q$ satisfies the equation  
\begin{equation}\label{eqn-q10}
\frac { - z\, {\det} D^2  q  +  \theta \, q_z \, q_{yy} }
{q_z^4} = - H (z,y) 
\end{equation}
where $H(z,y) := h(x,y)$, $x=q(x,y)$. 
Based on the a'priori estimates,   we will show in section 4 that the linearized operator 
of equation \eqref{eqn-q10}  near a  function  $q$ satisfying  
the  bounds \eqref{eqn-matrix5} and \eqref{eqn-coeffb}   is of the form
\begin{equation}\label{eqn-linear}
L(\tilde q) = z\, \alpha_{11} \, \tilde q_{zz} + 2 \sqrt z \al_{12} \, \tilde q_{zy} + \alpha_{22}\, \tilde q_{yy} + b \, \tilde q_z  + c\, \tilde q 
\end{equation}
with $(\alpha_{ij})$ strictly positive and  $b \geq \nu >0$.

To apply the method of continuity  one needs to establish  sharp a' priori estimates for linear degenerate equations of the  form \eqref{eqn-linear}. These estimates become optimal when scaled
according to the singular metric 
\begin{equation}\label{eqn-metric}
ds^2 = \frac{dz^2}{z} + dy^2 \tag{SM}
\end{equation}
which is the natural metric  corresponding   this problem. 

Denote   by  $\mathcal B_\eta$ the box $
\mathcal B_\eta =
\{ \, 0\leq z \leq \eta^2
, \,\, |y-y_0| \leq \eta \,\, \} $ and for any two points
$Q_1=(z_1,y_1)$ and $Q_2=(z_2,y_2)$ in ${\mathcal B}_\eta$,  by $s$ the distance
function 
\begin{equation}\label{dist}
s(Q_1,Q_2) = |\sqrt{z_1}-\sqrt{z_2}  | + |y_1-y_2| \tag{DF}
\end{equation}
 with respect  to the singular
metric $d s^2$.  Let     $C^\alpha_s({\mathcal B}_\eta)$ be  the space of 
all H\"older continuous functions 
on ${\mathcal B}_\eta$ with respect to the distance function $s$. Suppose that the
function $q$ belongs to the class $C^\alpha_s({\mathcal B}_\eta)$
and has continuous derivatives $q_t, q_z, q_y, q_{zz}, q_{zy}, q_{yy}$
in the interior of ${\mathcal B}_\eta$,  and that 
\begin{equation}\label{space}
q,\, q_z, \,  q_y, \,  z\, q_{zz}, \, \sqrt z\, q_{zy}, \, q_{yy} \in
C^\alpha_s({\mathcal B}_\eta)
\end{equation}
extend continuously up to the boundary, and the extensions are H\"older continuous on ${\mathcal B}_\eta$ of class $
C^\alpha_s({\mathcal B}_\eta)$ as before. 
We denote by  $C^{2+\alpha}_s({\mathcal B}_\eta)$  the
Banach space of all such functions with norm
$$\aligned
\|q\|_{C^{2+\alpha}_s({\mathcal B}_\eta)} &= \|f\|_{C^\alpha_s({\mathcal B}_\eta)} + \|Dq\|_{C^\alpha_s({\mathcal B}_\eta)} 
+ \|q_t\|_{C^\alpha_s({\mathcal B}_\eta)}\\
& + \|z \, q_{zz}\|_{ C^\alpha_s({\mathcal B}_\eta)} +
\|\sqrt z\, q_{zy}\|_{ C^\alpha_s({\mathcal B}_\eta)} +
\|q_{yy}\|_{ C^\alpha_s({\mathcal B}_\eta)}\ .\endaligned
$$

\begin{definition}\label{def-c2a}
We say that $g \in C^{2,\alpha}_s(\overline{\Omega(g)})$ if $g$ is of class $C^{2,\alpha}$ in the
interior of $\Omega(g)$ and its transformation $q \in C^{2,\alpha}_s({\mathcal B}_\eta)$ near any free-boundary point $P_0$. We denote by $\|g\|_{C^{2,\alpha}_s}$ the corresponding norm. 
\end{definition}

The following  result follows as an easy modification of Theorem 5.1 in \cite{DH2}.

\smallskip

\noindent{\bf Theorem [DH] } (Schauder estimate). {\em Assume that the coefficients
of the operator $L$ given by \eqref{eqn-linear} belong to the class $C^\alpha_s({\mathcal B}_\eta)$, for some $\eta >0$,  and $(a_{ij})$ is strictly positive. Then,
for any $r < \eta$}
$$\|\tilde q\|_{C^{2,\alpha}_s({\mathcal B}_{r})} \leq C \, \left (  \, \|\tilde q\|_{C^{\circ}({\mathcal B}_\eta)} +
\| h \|_{C^\alpha_s({\mathcal B}_\eta)}\,  \right )$$
{\em for all smooth functions $\tilde q$ on ${\mathcal B}_\eta$ for which $L\tilde q=h$. }

\smallskip

The following result was shown in \cite{DL2}. 

\smallskip

\noindent{\bf Theorem [DL] } (H\"older regularity). 
{\em Assume that the coefficients of the operator 
$L$ given by \eqref{eqn-linear}  are bounded measurable  on ${\mathcal B}_\eta$, $\eta >0$,  with
$(a_{ij})$ strictly positive and $b \geq \nu >0$. 
Set $ d\mu =  x^{\frac \nu2-1}\, dx\, dy$. Then, 
there exist a number $0< \alpha <1$ so that, for any $r<\eta/2$ }
$$\|\tilde q\|_{C^{\alpha}_s({\mathcal B}_{r})} \le C \,  \left ( \|\tilde q\|_{ C^{\circ}
({\mathcal B}_\eta) } + ( \int_{ {\mathcal B}_\eta}  h^2 \, d\mu )^{1/2} \right )$$
{\em for all smooth functions $\tilde q$ on ${\mathcal B}_\eta$ for which $L\tilde q=h$. }

Based on Theorem [DL] and the sharp a priori bounds Theorem \ref{theorem-c-2},  the following a priori estimate will be shown in  section 4. 

\smallskip

\begin{thm} [$C^{2,\alpha}_s$-estimate]  \label{theorem-c2a2} Assume that $g \in C^4(\overline {\Omega (g)})$ is a 
classical solution of problem  \eqref{eqn-g},  with  $0 < p < 2$,  and that $B_\rho(0) \subset \Lambda(g)$. Then, there exists a constant $C=C(\|\bar \vp \|_{C^2_{\partial \Omega}}, \theta,  \rho\, )>0$ such that $\|g \|_{C^{2,\alpha}_s(\overline {\Omega(g)}) } \leq C$.
\end{thm}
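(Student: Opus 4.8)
\textbf{Plan of proof for Theorem \ref{theorem-c2a2}.}
The strategy is to reduce the desired estimate to an application of the linear H\"older regularity result Theorem [DL] at each free-boundary point, combined with interior Schauder theory away from the interface, after first establishing the sharp a priori bounds of Theorem \ref{theorem-c-2}. I would proceed in the following steps.

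\emph{Step 1: away from the interface.} Since $g$ is a classical solution, in the region $\{g > \delta\}$ the equation \eqref{eqn-g2} is uniformly elliptic (the linearization has ellipticity constants controlled once $g \ge \delta$ and $|Dg| \le c^{-1}$), so standard Evans--Krylov and Schauder estimates (cf. \cite{CC}) give interior $C^{2,\alpha}$ bounds on $g$ depending only on the stated data and on $\delta$. Thus the only issue is a neighborhood of $\Gamma(g)$, and it suffices to bound the $C^{2,\alpha}_s$ norm of the transformed function $q$ on a box ${\mathcal B}_\eta$ around each free-boundary point $Q_0$, uniformly.

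\emph{Step 2: the nondegeneracy of the gradient and the change of variables.} Here I invoke the sharp a priori estimates of section 3 (Theorem \ref{theorem-c-2}): the bounds \eqref{eqn-grad0} and \eqref{eqn-matrix30} give $c < |Dg| \le c^{-1}$ near $\Gamma(g)$ with $c$ depending only on $\|\bar\vp\|_{C^2_{\partial\Omega}}$, $\theta$, $\rho$. The crucial role of the hypothesis $B_\rho(0) \subset \Lambda(g)$ is exactly to feed these estimates: it prevents the vanishing set from being degenerate and, via the comparison principle (Theorem \ref{theorem-c-comparison}) against suitable radial barriers, forces $g_\nu$ to stay bounded away from $0$ and the curvature of $\Gamma(g)$ to stay bounded. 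With $|Dg|$ nondegenerate I rotate so that $g_x > 0$, $g_y = 0$ at $P_0$ and solve $z = g(x,y)$ for $x = q(z,y)$; then $q$ solves \eqref{eqn-q10}, and the a priori bounds on $g$ translate (by a direct but routine computation) into the bounds \eqref{eqn-matrix5} and \eqref{eqn-coeffb} on $q$, i.e. $q, q_z, q_y, z\,q_{zz}, \sqrt z\, q_{zy}, q_{yy}$ are all bounded on ${\mathcal B}_\eta$.

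\emph{Step 3: linearize and apply Theorem [DL], then bootstrap.} Differentiating \eqref{eqn-q10} in $y$ (the tangential direction along the interface), $\tilde q := q_y$ satisfies a linear equation $L\tilde q = h$ with $L$ of the form \eqref{eqn-linear}; by Step 2 the coefficients $(\alpha_{ij})$ are bounded measurable and strictly positive and $b \ge \nu > 0$, and the right-hand side is controlled in $L^2(d\mu)$ since $h \in C^2$ and the bounds of Step 2 hold. Theorem [DL] then yields $q_y \in C^\alpha_s({\mathcal B}_r)$ for some $\alpha \in (0,1)$, with norm controlled by the data. A parallel argument applied to the equation itself (using \eqref{eqn-q10} to solve algebraically for the ``good'' second derivative combination and the $C^\alpha_s$ control just gained on the lower-order and tangential terms) gives the remaining components $q_z$, $z\,q_{zz}$, $\sqrt z\, q_{zy}$, $q_{yy}$ in $C^\alpha_s({\mathcal B}_r)$; patching over a finite cover of $\Gamma(g)$ and combining with Step 1 produces the claimed global bound $\|g\|_{C^{2,\alpha}_s(\overline{\Omega(g)})} \le C$.

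\emph{Main obstacle.} The delicate point is Step 2: extracting the two-sided nondegeneracy $c < |Dg| \le c^{-1}$ (and the curvature bound for $\Gamma(g)$) near the free boundary from the a priori estimates of section 3, and checking that the transformed coefficients genuinely satisfy $b \ge \nu > 0$ with the correct $z$- and $\sqrt z$-weights, so that the degenerate Schauder/H\"older machinery of \cite{DH2}, \cite{DL2} applies verbatim. Everything downstream (Steps 1 and 3) is then a fairly mechanical assembly; the geometric input that makes it run is the assumption $B_\rho(0) \subset \Lambda(g)$, which both guarantees a nonempty vanishing region and supplies the scale $\rho$ on which all constants depend.
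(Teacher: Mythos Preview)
Your overall architecture (Steps 1--3) matches the paper's, but Step 3 glosses over exactly the point where the real work lies, and your identification of the ``main obstacle'' is misplaced. Differentiating \eqref{eqn-q10} in $y$ and applying Theorem [DL] to $\tilde q = q_y$ is correct and is what the paper does (Lemma \ref{lemma-qzqy}). However, the ``parallel argument'' you invoke for $q_z$ fails: differentiating \eqref{eqn-q10} in $z$ produces, in addition to the expected linear operator acting on $q_z$, the extra zeroth-order term $q_{zy}^2/q_z^5$ arising from the explicit factor of $z$ in front of $\det D^2 q$. The a priori bounds of Theorem \ref{theorem-c-2} only control $\sqrt{z}\,q_{zy}$, not $q_{zy}$ itself, so this term is \emph{not} bounded and Theorem [DL] cannot be applied directly. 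The paper circumvents this by observing that the bad term has a sign, so $q_z$ is a \emph{super}-solution of a good linear equation; it then builds matching sub-solutions of the form $(q_z - m)^\beta$ and appeals to the one-sided Harnack inequality and local-maximum principle of \cite{DL2} rather than to Theorem [DL].

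Likewise, $z\,q_{zz}$ and $\sqrt{z}\,q_{zy}$ cannot be obtained ``algebraically'' from the equation and the first-derivative H\"older control: \eqref{eqn-q10} is a single scalar relation among three second derivatives. The paper instead uses a scaling argument (Lemma \ref{lemma-infty}): rescale $q$ about a point at height $z = r^2$ so that the equation becomes uniformly elliptic on the unit disk, harvest interior $C^\infty$ bounds there, and combine these with the $C^\alpha_s$ control on $q_z, q_y$ to push regularity down to $z=0$ (Lemma \ref{lemma-qzz}). Only $q_{yy}$ is then read off from the equation (Lemma \ref{lemma-qyy}). Finally, Step 2 is not the delicate part: once Theorem \ref{theorem-c-2} and Proposition \ref{proposition-detg} (which you omit, but which is needed for the lower bound $b \ge \nu > 0$) are in hand, the bounds \eqref{eqn-matrix5}--\eqref{eqn-coeffb} follow by direct computation (Lemma \ref{lemma-boundsA}).
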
 

\smallskip

Based on  Theorem  [DH]  and Theorem  \ref{theorem-c2a2},   a $C^{2,\alpha}_s$ solution of
the problem \eqref{eqn-g}  will be  constructed  via the method of continuity. It follows from Theorem [DH] and an inductive argument  that  the pressure $g$ is $C^\infty$ smooth up to the interface
$\Gamma(g)$,  which readily implies  that the interface is smooth (c.f. section  4).

\subsection{Existence of solutions via the method of continuity}\label{sec-classic2}
We will now outline the basic steps of the proof  of the existence of a 
classical solution  of \eqref{eqn-g} via the method of continuity. The proofs of these steps will be given in the
following sections. 

According to our assumption in Theorem \ref{theorem-main},  there exists  a super-solution $\psi$ of \eqref{eqn-f}, i.e., $\psi$ satisfies 
$$\det (D^2 \psi) \leq \psi^p, \qquad \mbox{in} \,\, \Omega$$
which vanishes 
on a non-empty domain $\Lambda(\psi) \subset \Omega$.
We define   $$\overline{h} : =\frac{\det (D^2 \psi)}{ \psi^p}\leq 1.$$
Before we proceed with the outline of the method of continuity, let us give an example
which shows that there exist boundary values  $\phi$ for which such a super-solution
can be found. 

\smallskip

\noindent{\bf Example.} Set   
$$\psi_1(P)= c_1(|P|^2-\rho^2)_+^q, \qquad q= \frac 3{2-p}$$
and pick a $c_1>0$ so that $$ 2\lambda<\frac{\det(D^2 \psi_1(P))}{\psi_1(P)^p}
 <\frac12$$ for some $\lambda \in (0,1)$ in $\Omega(\psi_1)$.  When the boundary data $\vp$  in  \eqref{eqn-f}  is such that
 $$\vp \geq \psi_1, \qquad \mbox{on} \,\, \partial\Omega$$ we can modify $\psi_1$
 to a convex function $\psi(P)$, keeping the decay rate to zero
 on $\partial\Omega (\psi)$, so that $\psi(P)=\vp(P)$ on $\partial\Omega$ and 
\begin{equation}\label{eqn-oh}
\lambda<\frac{\det(D^2\psi(P))}{\psi(P)^p}: =\overline{h}(P)
 <1.
 \end{equation}
Hence, $\psi$ is the desired super-solution. 
\smallskip

Going back to the method of continuity, we consider the following boundary value problems depending on  a parameter $t\in[0,1]$:
\begin{equation}
\begin{cases}
\det (D^2 f(P))=((1-t)\overline{h}+t)f^p\quad &\text{in $\Omega$}\\
f=\vp\quad &\text{on $\partial\Omega$.}
\end{cases}
\tag{MAt}
\label{eqn-mat}
\end{equation}
Set $h_t :=(1-t)\overline{h}+t$ and observe that 
$$\lambda  < h_t \leq  1$$
since $\overline{h}$ satisfies \eqref{eqn-oh}. Hence, $h_t$ satisfies condition \eqref{eqn-lambda}. Also, 
since $h_t \geq \bar h$,  a classical solution $f(P ;t)$ of \eqref{eqn-mat} is a sub-solution
of 
\begin{equation}\label{eqn-hhh}
\begin{cases}
\det (D^2 f(P)) =  \overline{h}\,  f^p\quad &\text{in $\Omega$}\\
f=\vp\quad &\text{on $\partial\Omega$}
\end{cases}
\end{equation}
while the given  $\psi(x)$ is a super-solution of  \eqref{eqn-hhh}.
Hence,  by the comparison  lemma \ref{theorem-c-comparison},  if $\{\psi(P) =0\} \subset \{f(P;t)=0\}$, then
$f(P ;t)  \leq \psi(P)$ in $\Omega$.
We are going to carry out the method of continuity starting with $f_0=\psi(x)$ at $t=0$, keeping 
$$ \{\psi(P) =0\} \subset \{f(P ;t) =0\}, \qquad   \mbox{for}   \,\,\,    0\leq t\leq 1 $$ so  that $f(P ;t)$ 
has  a non-empty  vanishing region $\Lambda(f(P ;t))$, for every $t \in [0,1]$. This justifies
our assumption (H-2) below. 

\medskip
Assume that $f$ is a classical solution of \eqref{eqn-mat} (we will drop the index $t$ on $f$ for the rest of the section). Then, the corresponding  pressure  function
$g$, defined in terms of $f$ by  (\ref{eqn-prii}), satisfies
\begin{equation}
\begin{cases}
g\det D^2
g+ \theta \, (g_{x}^2g_{yy}-2g_xg_yg_{xy}+g_{y}^2g_{xx})=h_t \quad
&\text{in $\Omega$}\\
g=\overline{\vp} \quad &\text{on $\partial\Omega$}
\end{cases}
\tag{MAPt}
\label{eqn-mapt}
\end{equation}
for $\overline{\vp}=q^{\frac23}\vp^{\frac1q}$.

\smallskip
We  make the  following assumptions:

\begin{enumerate}[(H-1)]

\item $\Omega \subset B_1(0)$.
\item $f$ and $g$ vanish on a non-empty sub-domain 
$\Lambda (f)=\Lambda (g) \subset
\subset \Omega$ and $$B_\rho(0)=\{ x \in \R^2: \,\, |x| < \rho \}
\subset \Lambda (f), \quad \mbox{for some} \,\,   \rho >0.$$

\item $f$  is strictly positive and strictly convex
 on $\Omega (f)=\{x\in \Omega|\, f>0\}.$

\item The pressure  $g$ satisfies  $g \in C^4(\overline{ \Omega(g)})$, i.e.,
in particular it is $C^4$-smooth up to $\partial \Omega (g)$.
\end{enumerate}

To simplify the notation,  we will set from now on
$$\|g\|_{C^2_{\partial \Omega}} = \max_{\partial \Omega} (|D_{ij} g|  + |D_i g| + g).$$
In the next section  we will establish sharp  a-priori bounds on the first and second
derivatives of the pressure  $g$ up to the interface $\partial
\Omega$, as stated in the sequel. 



\begin{thm}[$C^{2}_s$-estimate]  \label{theorem-c-2} Assume that $g$ is a classical
solution of equation  \eqref{eqn-g2} in $\Omega$ 
with  $0 < p < 2$  and   $h \in C^2(\Omega)$ satisfying \eqref{eqn-lambda}. 
Assume in addition that $g$ satisfies the assumptions (H-1)--(H-4).
Define the matrix
\begin{equation}\label{eqn-matrix20}
\mathcal M= (\mu_{ij})  = \left (
\begin{split}
g\, g_{\nu\nu}+ \theta \, g_\nu^2 &  \quad \sqrt{g} \, g_{\nu\tau}\\
\sqrt{g} \, g_{\nu\tau}  \,\,\,\, & \quad \,\,\, g_{\tau\tau}
\end{split}\right )
\end{equation}
with $\nu$, $\tau$ denoting the outer normal and tangent
direction to the level sets of $g$ respectively.
 Then,  there exists  $c=c(\|g\|_{C^2_{\partial \Omega}}, \|h\|_{C^2}, \theta,  \lambda, \rho\, )>0$,   for which  the  bounds
\begin{equation}\label{eqn-grad0}  c \leq |Dg| \leq c^{-1}
\end{equation}
and
\begin{equation}\label{eqn-matrix30}
 c\, |\xi|^2  \leq  \mu_{ij} \xi_i \, \xi_j  \leq c^{-1}\, |\xi |^2, \qquad \forall \xi \neq 0
 \end{equation}
hold on $\overline{\Omega (g)}$.
\end{thm}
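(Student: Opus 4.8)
The plan is to prove the two bounds \eqref{eqn-grad0} and \eqref{eqn-matrix30} by a combination of barrier/comparison arguments (for the gradient bounds and for the lower bound on the matrix $\mathcal M$ at the free boundary) and a maximum-principle argument applied to a carefully chosen quantity built from second derivatives of $g$ (for the interior and upper bounds on $\mathcal M$). The overall structure I would follow is: (1) establish the two-sided gradient bound \eqref{eqn-grad0}; (2) establish the bounds on $\mathcal M$ at the free boundary $\Gamma(g)$; (3) establish the bounds on $\mathcal M$ in the interior of $\Omega(g)$; (4) establish the bounds on $\mathcal M$ on the outer boundary $\partial\Omega$; (5) patch (2)--(4) together via the maximum principle.

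\emph{Step 1 (gradient bounds).} The upper bound $|Dg|\le c^{-1}$ I would get by sandwiching $g$ between barriers: on one side, since $f\le\psi$-type comparison gives $g\le \bar\psi$ for an explicit super-solution with bounded gradient near $\partial\Omega$ and near $\Gamma(g)$; on the other side, the concavity-type structure of $\Pc$ together with the bound $h\le\lambda^{-1}$. The more delicate lower bound $|Dg|\ge c>0$ is a nondegeneracy statement for the free boundary: near $\Gamma(g)$ one uses that $g$ vanishes on a set containing $B_\rho(0)$, so by convexity of $f$ (hence of a power of $g$) and the equation $\theta g_\nu^3\kappa=h\ge\lambda$ on $\Gamma(g)$, together with a bound on the curvature $\kappa$ of $\Gamma(g)$ from above (which itself follows from $\Lambda(g)\supset B_\rho$ and $\Lambda(g)$ convex, giving $\kappa \le C(\rho)$ via an enclosing disc), one gets $g_\nu \ge (\lambda/(\theta\kappa))^{1/3}\ge c>0$ on $\Gamma(g)$; propagating this inward is where one needs the metric-adapted estimates.

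\emph{Steps 2--4 (bounds on $\mathcal M$).} For the lower bound of $\mathcal M$ on $\Gamma(g)$ I would diagonalize: along $\Gamma(g)$ the off-diagonal entry degenerates ($\sqrt g\,g_{\nu\tau}\to 0$ as $g\to0$) and the equation gives $g_{\tau\tau}=g_\nu^2\,g_{\tau\tau}/g_\nu^2$... more precisely $\theta g_\nu^2 g_{\tau\tau} = h$ on $\Gamma(g)$, so $g_{\tau\tau}=h/(\theta g_\nu^2)$, which is bounded above and below using Step 1; and $g\,g_{\nu\nu}+\theta g_\nu^2 \to \theta g_\nu^2$, again two-sided bounded. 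On $\partial\Omega$ the bounds follow from $g=\bar\varphi\in C^2$ and the equation solved for the normal second derivative. For the interior upper bound on $\mathcal M$ — this is the heart of the matter — I would differentiate equation \eqref{eqn-g2} twice in a direction, or rather apply the maximum principle to a test function of the form $w=\mu_{ij}\xi_i\xi_j$ maximized over unit $\xi$ (equivalently the largest eigenvalue of $\mathcal M$, or $\log$ of it plus an auxiliary term), using that the linearization of $\Pc$ is elliptic with the degeneracy matched exactly to the weights $g$, $\sqrt g$, $1$ in the definition of $\mathcal M$; a maximum of $w$ cannot occur in the interior without contradicting the differentiated equation, so it is attained on $\Gamma(g)\cup\partial\Omega$, where Steps 2 and 4 apply. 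For the lower bound in the interior one uses that $\det\mathcal M$ is controlled: a computation should show $\det\mathcal M = (g\det D^2 g + \theta(\ldots))\cdot(\text{something})+ \ldots$ tying $\det\mathcal M$ to $h$ and $g_\nu^2$, so once the largest eigenvalue is bounded above the smallest is bounded below.

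\emph{Main obstacle.} I expect the genuinely hard step to be the interior upper bound on $\mathcal M$, i.e., running the maximum principle on the second-derivative quantity: the equation is fully degenerate at $\Gamma(g)$, so the test function must be chosen to be exactly scale-invariant under the singular metric \eqref{eqn-metric} (this is presumably why the weights $g$, $\sqrt g$, $1$ appear), and one must check that all the error terms produced by differentiating the coefficients and by the first-order term $b\,\tilde q_z\ge\nu>0$ have a favorable sign or can be absorbed — in particular controlling terms involving $Dh$, $D^2h$ (hence the dependence on $\|h\|_{C^2}$) and third derivatives of $g$ that appear transiently. A secondary difficulty is making the barrier construction in Step 1 compatible with assumption (H-2) uniformly in the continuity parameter, but that is bookkeeping once the enclosing-disc curvature bound $\kappa\le C(\rho)$ is in hand.
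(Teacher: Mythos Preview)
Your overall architecture for the matrix bounds is close to the paper's, and one point you were tentative about is in fact exact: a short computation in the $(\nu,\tau)$ frame gives
\[
\det\mathcal M=(g\,g_{\nu\nu}+\theta g_\nu^2)\,g_{\tau\tau}-g\,g_{\nu\tau}^2
= g\det D^2g+\theta\,g_\nu^2 g_{\tau\tau}=h,
\]
so once the diagonal entries are bounded above the lower eigenvalue bound is automatic from $h\ge\lambda$. The paper also obtains the interior upper bounds on $\mathcal M$ by the maximum principle, though it splits the work into two pieces --- first bounding $G=g_\nu^2\,g_{\tau\tau}$ via the test function $G+|Dg|^2$, then bounding $Q=\max_\gamma(g\,g_{\gamma\gamma}+\theta g_\gamma^2)$ --- rather than attacking the largest eigenvalue directly.

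However, your Step~1 contains a genuine error. You claim that $B_\rho\subset\Lambda(g)$ and convexity of $\Lambda(g)$ yield $\kappa\le C(\rho)$ on $\Gamma(g)$ ``via an enclosing disc'', and then deduce $g_\nu\ge(\lambda/(\theta\kappa))^{1/3}$ there. But a convex domain containing a fixed ball can have arbitrarily large boundary curvature: the convex hull of $B_\rho(0)$ and a tiny disc $B_\epsilon(p)$ with $p$ near $\partial\Omega$ has curvature $\sim 1/\epsilon$ near $p$. So no a~priori curvature bound is available, the free-boundary estimate $g_\nu\ge c$ collapses, and there is nothing to ``propagate inward''. (Your barrier sketch for the upper gradient bound is also incomplete: $g\le\bar\psi$ does not by itself control $|Dg|$, since the free boundaries of $g$ and $\bar\psi$ need not coincide.)

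The paper avoids both issues by abandoning barriers and running the maximum principle directly on first-order quantities. For the upper bound it takes $M=r^2|Dg|^2$ and shows, by differentiating \eqref{eqn-g2}, that $a_{ij}M_{ij}>0$ at any interior or free-boundary maximum unless $M$ is already controlled. For the lower bound it takes $M=r^{-2q}(xg_x+yg_y)=r^{1-2q}g_r$ with $q$ chosen large depending on $\|h\|_{C^1}$, and argues analogously at a minimum. The radial weights, together with (H-2) ensuring $r\ge\rho$ on $\Omega(g)$, are what substitute for the unavailable curvature bound.
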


Combining Theorem \ref{theorem-c-2} with the H\"older regularity Theorem [DL], we
will show in section  \ref{sec-c2a} the following a priori estimate. 

\begin{thm}[$C^{2,\alpha}_s$-estimate] \label{theorem-c-2a} Under the same conditions as in  Theorem \ref{theorem-c-2}, 
there is a uniform $0<\alpha<1$ and $C=C(\|g\|_{C^2_{\partial \Omega}}, \|h\|_{C^2},\theta,  \lambda, \rho\, )< \infty$, such that       $$
       \| g\|_{C^{2,\alpha}_s(  \overline{\Omega (g)})} \leq C.$$
       In addition the curvature $\kappa(g)$ of the free-boundary $\Gamma(g)$ is of class $C^{\alpha}$.
\end{thm}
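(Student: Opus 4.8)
The plan is to establish Theorem~\ref{theorem-c-2a} by upgrading the $C^2_s$-bounds of Theorem~\ref{theorem-c-2} to H\"older bounds using the linear theory of Theorem~[DL], applied to an equation satisfied by the derivatives of $g$. The starting point is the reduction, described in \S\ref{subsec-linearized}, to the graph function $q(z,y)$ near a free-boundary point $P_0$; by Theorem~\ref{theorem-c-2} we have $c \le |Dg| \le c^{-1}$ and the matrix $\mathcal M$ is uniformly positive, which is exactly the hypothesis needed to make the change of variables $z = g(x,y)$, $x = q(z,y)$ legitimate and to guarantee that $q$ satisfies \eqref{eqn-q10} with coefficients enjoying the bounds \eqref{eqn-matrix5}, \eqref{eqn-coeffb}. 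In particular the linearized operator at $q$ has the degenerate-elliptic form \eqref{eqn-linear} with $(\alpha_{ij})$ strictly positive and $b \ge \nu > 0$.

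First I would differentiate \eqref{eqn-q10} in the tangential variable $y$: since the equation has smooth ($C^2$, by the hypothesis $h \in C^2$ and $g \in C^4$) coefficients in all its arguments, the derivative $\tilde q = q_y$ solves a linear equation $L \tilde q = F$, where $L$ is of the form \eqref{eqn-linear} and the right-hand side $F$ is built from $h$, $Dh$, and the already-controlled quantities $q, q_z, q_y, z\,q_{zz}, \sqrt z\, q_{zy}, q_{yy}$. Crucially, by Theorem~\ref{theorem-c-2} all of these are bounded, so $F$ is bounded and hence lies in $L^2(d\mu)$ with $d\mu = z^{\nu/2 - 1}\, dz\, dy$ on $\mathcal B_\eta$. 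Theorem~[DL] then yields a uniform $0 < \alpha < 1$ and a bound $\|q_y\|_{C^\alpha_s(\mathcal B_r)} \le C(\|q\|_{C^0} + \|F\|_{L^2(d\mu)})$, with $C$ depending only on the ellipticity/lower-bound constants, hence ultimately only on $\|g\|_{C^2_{\partial\Omega}}$, $\|h\|_{C^2}$, $\theta$, $\lambda$, $\rho$. The remaining second-order quantities are recovered from the equation: once $q_y \in C^\alpha_s$, differentiating \eqref{eqn-q10} again in $y$ and solving algebraically for the top-order terms expresses $z\, q_{zz}$, $\sqrt z\, q_{zy}$, $q_{yy}$ (equivalently $g\,g_{\nu\nu}$, $\sqrt g\, g_{\nu\tau}$, $g_{\tau\tau}$) in terms of $q_y$, $q_z$, $q$ and $H$, all now in $C^\alpha_s$; this is precisely the content of the space \eqref{space}, giving $q \in C^{2,\alpha}_s(\mathcal B_r)$ and hence $g \in C^{2,\alpha}_s(\overline{\Omega(g)})$ by Definition~\ref{def-c2a}. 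The statement about the curvature $\kappa(g)$ then follows because, on $\Gamma(g)$, $\theta\, g_\nu^3\, \kappa = h$ with $g_\nu = q_z^{-1}$, so $\kappa = h\, q_z^3/\theta$ is $C^\alpha_s$ along the free boundary, i.e.\ the free boundary curve has $C^\alpha$ curvature; combined with interior $C^{2,\alpha}$ regularity of $g$ on $\Omega(g)$ (from Evans--Krylov / Schauder for the now-uniformly-elliptic equation away from $\Gamma$) one gets the full $C^{2,\alpha}_s$ norm bound.

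The step I expect to be the main obstacle is verifying that the differentiated equation genuinely has the structural form \eqref{eqn-linear}, namely that the lower-order coefficient of $\tilde q_z$ stays bounded below by a positive $\nu$ and that the right-hand side $F$ does not secretly contain uncontrolled terms. The danger is that differentiating \eqref{eqn-q10} in $y$ produces, through the $z\,\det D^2 q$ term, expressions involving $q_{zz}$ and $q_{zy}$ undifferentiated (which are only $O(1/z)$ and $O(1/\sqrt z)$ respectively) multiplied by derivatives of the coefficients or by $q_{yy}$, and one must check these combine into the allowed $z\,\alpha_{11}\tilde q_{zz} + 2\sqrt z\,\alpha_{12}\tilde q_{zy} + \alpha_{22}\tilde q_{yy}$ pattern plus a genuinely \emph{bounded} remainder rather than something that blows up like $z^{-1/2}$ near $z = 0$. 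This is exactly where the weighted $L^2(d\mu)$ framework of Theorem~[DL] is tailored: the weight $z^{\nu/2-1}$ is integrable, so one only needs the remainder in $L^2(d\mu)$, not in $L^\infty$, which relaxes the requirement considerably. Making the bookkeeping of these weights precise — and confirming that the constant $\nu$ from the $b \ge \nu > 0$ bound in \eqref{eqn-linear} is the same $\nu$ that appears in $d\mu$ — is the technical heart of the argument; everything else is a routine, if lengthy, application of the cited linear estimates.
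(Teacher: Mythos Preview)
Your outline correctly identifies the change of variables to $q(z,y)$ and the use of Theorem~[DL] on the $y$-differentiated equation to get $q_y\in C^\alpha_s$; this is exactly what the paper does for $q_y$. However, there are two genuine gaps after that.

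First, you never establish $q_z\in C^\alpha_s$; you simply list it among the quantities ``all now in $C^\alpha_s$'' without proof. In the paper this is the hard half of Lemma~\ref{lemma-qzqy}: if you differentiate \eqref{eqn-q10} in $z$ you pick up an extra zeroth-order term $q_{zy}^2/q_z^5$ (coming from $\partial_z(z\det D^2q)$), and Theorem~\ref{theorem-c-2} only controls $\sqrt{z}\,q_{zy}$, so this term is \emph{not} bounded and the equation for $q_z$ is \emph{not} of the form \eqref{eqn-linear} with bounded right-hand side. The paper circumvents this by observing that the bad term has a sign, so $q_z$ is only a \emph{super}-solution of the clean linear equation; it then builds companion sub-solutions $(q_z-m)^\beta$, $q_z^\beta$ and applies the Harnack inequality and local maximum principle from \cite{DL2} rather than Theorem~[DL] directly. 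Your ``main obstacle'' paragraph worries about the $y$-differentiated equation, where in fact no unbounded terms appear; the real obstruction is in the $z$-direction, and it is not resolved by the $L^2(d\mu)$ slack you mention.

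Second, the step ``differentiating \eqref{eqn-q10} again in $y$ and solving algebraically for the top-order terms expresses $z\,q_{zz},\ \sqrt z\,q_{zy},\ q_{yy}$ in terms of $q_y,\ q_z,\ q,\ H$'' is not valid: \eqref{eqn-q10} is a single scalar relation among three second-order quantities, and differentiating it produces third-order terms rather than an algebraic system. The paper instead proves a scaling lemma (Lemma~\ref{lemma-infty}): the rescaled functions $q^r(z,y)=r^{-2}q(r^2+r^2z,\,y_r+ry)$ solve a uniformly elliptic version of \eqref{eqn-q10} on a fixed disk, so interior $C^\infty$ estimates give uniform control of $D^2q^r$. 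Combining this interior control with the boundary H\"older bounds on $q_z,q_y$ yields $z\,q_{zz},\ \sqrt z\,q_{zy}\in C^\alpha_s$ (Lemma~\ref{lemma-qzz}); only then does the equation give $q_{yy}\in C^\alpha_s$ (Lemma~\ref{lemma-qyy}). Your argument is missing both the $q_z$ sub/super-solution trick and this scaling step, and without them the passage from $q_y\in C^\alpha_s$ to the full $C^{2,\alpha}_s$ bound does not close.
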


The above result shows  that  the coefficients of the matrix  \eqref{eqn-matrix20} are  
uniformly H\"older. This will  be combined  in section  \ref{sec-c2a} with the Schauder estimate, Theorem [DH],  to 
obtain  the following regularity of $g$.
\begin{thm} [Higher regularity] \label{theorem-c-smooth}
 Under the same conditions as  in Theorem \ref{theorem-c-2} and the additional
 assumption that $h \in C^\infty(\Omega)$, the solution 
      $g$ of \eqref{eqn-g2} is smooth  on ${\Omega(g)}$ up to the interface $\Gamma(g)$ which means that for every positive integer, there exists $C_k=C(\|g\|_{C^2_{\partial \Omega}}, \|h\|_{C^{k+2}}, \theta,  \lambda, \rho, k\, )< \infty$ for which 
      $$\|g\|_{C^{k+2,\alpha}_{s}( \overline{\Omega (g)})} \leq C_k$$
       and   the curvature $\kappa(g)$ of $\Gamma  (g)$
      is $C^{k,\alpha}$. It follows that $g$ is $C^\infty$-smooth
      up to the interface $\Gamma(g)$ and that the interface is smooth. 
\end{thm}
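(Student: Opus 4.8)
The plan is to run a bootstrap argument. Having already obtained, from Theorem \ref{theorem-c-2a}, that $g \in C^{2,\alpha}_s(\overline{\Omega(g)})$ and that the curvature $\kappa(g)$ of $\Gamma(g)$ is $C^\alpha$, I would first fix a free-boundary point $P_0$, normalize so that $g_x>0$, $g_y=0$ at $P_0$, and pass to the inverse function $x=q(z,y)$ solving $z=g(x,y)$, which satisfies the degenerate equation \eqref{eqn-q10}. The estimates of Theorem \ref{theorem-c-2} (in particular \eqref{eqn-grad0} and \eqref{eqn-matrix30}) guarantee that near $P_0$ the coefficients and the solution $q$ lie in the relevant classes, so that the linearization of \eqref{eqn-q10} around $q$ has the form \eqref{eqn-linear} with $(\alpha_{ij})$ strictly positive, $b\geq\nu>0$, and — crucially — coefficients in $C^\alpha_s(\mathcal B_\eta)$, since those coefficients are built algebraically out of $z$, $q$, $Dq$, $z\,q_{zz}$, $\sqrt z\,q_{zy}$, $q_{yy}$, $H$, all of which are $C^\alpha_s$ once $g\in C^{2,\alpha}_s$ and $h\in C^\infty$.

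Next I would apply the Schauder estimate, Theorem [DH], to the equation $L(\partial_y q) = \partial_y(\text{r.h.s.})$ obtained by differentiating \eqref{eqn-q10} in the good (tangential) direction $y$: since $L$ has $C^\alpha_s$ coefficients and the right side is $C^\alpha_s$, we get $\partial_y q \in C^{2,\alpha}_s(\mathcal B_r)$, hence $q \in C^{3,\alpha}_s$ after also recovering the $z$-derivatives from the equation itself (the equation expresses $z\,q_{zz}$, and after one $z$-differentiation $z\,q_{zzz}$, in terms of lower order $C^{1,\alpha}_s$ quantities, and one gains a derivative because of the factor $z$, which is exactly the weight in the singular metric). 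This is the heart of the scheme: differentiating \eqref{eqn-q10} $k$ times in $y$ and using Theorem [DH] at each stage yields, by induction on $k$, that $q\in C^{k+2,\alpha}_s(\mathcal B_r)$ with bounds depending only on $\|g\|_{C^2_{\partial\Omega}}$, $\|h\|_{C^{k+2}}$, $\theta$, $\lambda$, $\rho$, $k$. Since $\kappa(g)$, the outer normal $\nu$, and the tangent $\tau$ along each level set of $g$ are smooth functions of $q$ and its derivatives, the $C^{k+2,\alpha}_s$ bound on $q$ translates into the asserted bound $\|g\|_{C^{k+2,\alpha}_s(\overline{\Omega(g)})}\leq C_k$ and into $\kappa(g)\in C^{k,\alpha}$; letting $k\to\infty$ gives that $g$ is $C^\infty$ up to $\Gamma(g)$ and that $\Gamma(g)$, being a level set parametrized by the smooth function $q$, is smooth. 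In the interior of $\Omega(g)$ the equation \eqref{eqn-g2} is uniformly elliptic by \eqref{eqn-grad0}–\eqref{eqn-matrix30}, so classical elliptic Schauder theory gives interior smoothness directly, and the two regions match up.

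The main obstacle I expect is the inductive step at the level of the degenerate Schauder estimate: one must verify carefully that differentiating \eqref{eqn-q10} in $y$ does not destroy the structural hypotheses of Theorem [DH] — namely that the differentiated equation is again of the precise form \eqref{eqn-linear} with $(\alpha_{ij})$ strictly positive and $C^\alpha_s$ coefficients, and that all the lower-order terms produced (which involve products like $(z\,q_{zz})(\sqrt z\, q_{zy})$ and so on) genuinely land in $C^\alpha_s(\mathcal B_\eta)$ rather than merely being bounded. This is a bookkeeping issue about which weighted derivative combinations are closed under the relevant products and under one extra $y$-differentiation; the weights in \eqref{eqn-metric} are calibrated exactly so that this works, but making the induction airtight — especially tracking the dependence of $C_k$ on $\|h\|_{C^{k+2}}$ — is the delicate part. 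A secondary subtlety is the passage from the local inverse-function picture near each $P_0$ back to a global statement on $\overline{\Omega(g)}$, which requires a covering argument and compatibility of the local $q$-coordinates, but this is routine once the local estimates are in hand.
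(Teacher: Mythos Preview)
Your proposal is correct and follows essentially the same approach as the paper: near the interface you pass to the inverse function $q$, linearize \eqref{eqn-q10}, and bootstrap via the degenerate Schauder estimate Theorem~[DH] by differentiating tangentially in $y$ (the paper packages this induction as Theorem~\ref{theorem-cka}, deferring the details to \cite{DH2}), while in the interior you use uniformly elliptic theory. The only cosmetic difference is that the paper carries out the interior step on $f$ rather than $g$, invoking the concavity of $(\det D^2 f)^{1/2}$ to get the initial $C^{2,\alpha}$ estimate before bootstrapping; your route through $g$ is equally valid since Theorem~\ref{theorem-c-2a} already supplies interior $C^{2,\alpha}$ control of $g$ wherever $g\geq\delta>0$.
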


To implement the method of continuity, we next set 
\begin{equation*}
I=\{t\in[0,1]|\> \text{(\ref{eqn-mapt}) has a classical
solution satisfying  (H-1)-(H-4)}\}. 
\label{eqn-i}
\end{equation*}

Clearly $I$ is
{\em  nonempty} since by the assumption of Theorem \ref{theorem-main} $\psi$ is a solution of (\ref{eqn-mapt}) for
$t=0$. The existence of classical solution of \eqref{eqn-g2} is
equivalent to  that $1\in I$. The method  of
continuity relies on showing that the  nonempty set $I$ is both open and
closed in $[0,1]$ in the relative topology, which means that $I=[0,1]$ and
hence $1\in I$.

The {\em closedness} of $I$ easily  follows from  Theorems
\ref{theorem-c-2} and \ref{theorem-c-2a}, as shown next. 

\begin{lem} The set $I$ is closed.
\end{lem}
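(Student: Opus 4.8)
The plan is to show that $I$ is closed by a standard compactness argument based on the a priori estimates. Suppose $t_k \in I$ with $t_k \to t_\infty \in [0,1]$, and let $g_k$ be the corresponding classical solutions of \eqref{eqn-mapt} with parameter $t_k$, each satisfying (H-1)--(H-4). First I would invoke the $C^2_s$-estimate (Theorem \ref{theorem-c-2}) and the $C^{2,\alpha}_s$-estimate (Theorem \ref{theorem-c-2a}): since $\|g_k\|_{C^2_{\partial \Omega}} = \|\bar\vp\|_{C^2_{\partial\Omega}}$ is fixed by the boundary data, $\|h_{t_k}\|_{C^2} \leq \|\bar h\|_{C^2}+1$ is uniformly bounded, and $\theta, \lambda, \rho$ are fixed, these theorems give a uniform bound $\|g_k\|_{C^{2,\alpha}_s(\overline{\Omega(g_k)})} \leq C$ independent of $k$, together with a uniform $C^\alpha$ bound on the curvatures $\kappa(g_k)$ of the free boundaries $\Gamma(g_k)$. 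In particular the gradient bound \eqref{eqn-grad0} holds uniformly.

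Next I would extract a convergent subsequence. The uniform Lipschitz bound on $g_k$ (from $|Dg_k|\le c^{-1}$) together with the Arzel\`a--Ascoli theorem gives $g_k \to g_\infty$ uniformly on $\overline\Omega$ along a subsequence, and $g_\infty \geq 0$, $g_\infty = \bar\vp$ on $\partial\Omega$. Since $B_\rho(0) \subset \Lambda(g_k)$ for all $k$, the limit satisfies $B_\rho(0)\subset \Lambda(g_\infty)$, so (H-2) passes to the limit. The uniform $C^\alpha$ bound on the curvatures, combined with the uniform non-degeneracy $g_\nu \geq c$ on the free boundaries (a consequence of \eqref{eqn-grad0} restricted to $\Gamma(g_k)$, since $g_\nu = |Dg|$ there), lets me conclude that the free boundaries $\Gamma(g_k)$ converge in $C^{1,\alpha}$ to $\Gamma(g_\infty)$; in particular $\Omega(g_k) \to \Omega(g_\infty)$ and the defining equation $\theta g_\nu^3\kappa = h_{t_\infty}$ on $\Gamma(g_\infty)$ holds in the limit. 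Interior to $\Omega(g_\infty)$, on any compact subset one has uniform ellipticity of the operator $\mathcal P$ (from the uniform positivity \eqref{eqn-matrix30} of $\mathcal M$), so the $g_k$ are bounded in $C^{2,\alpha}_{\rm loc}(\Omega(g_\infty))$ and converge in $C^2_{\rm loc}$; passing to the limit in $\mathcal P[g_k] = h_{t_k}$ (using $h_{t_k} \to h_{t_\infty}$ in $C^2$) shows $\mathcal P[g_\infty] = h_{t_\infty}$ in $\Omega(g_\infty)$. Convexity of the associated $f_k = (q^{-2/3}g_k)^q$ is a closed condition, so $f_\infty$ is convex, giving (H-3) (strict convexity and strict positivity on $\Omega(g_\infty)$ follow since the equation is uniformly elliptic there and $g_\infty > 0$ there). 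Thus $g_\infty$ is a classical solution with parameter $t_\infty$, and finally the $C^4$ regularity (H-4) up to $\partial\Omega(g_\infty)$ is recovered by bootstrapping: the Schauder estimate Theorem [DH] applied to the transformed equation \eqref{eqn-q10} near each free-boundary point, whose linearized operator has $C^\alpha_s$ coefficients by Theorem \ref{theorem-c-2a}, upgrades $g_\infty$ to $C^{2,\alpha}_s$ and then $C^{k,\alpha}_s$ for all $k$ (this is exactly Theorem \ref{theorem-c-smooth}), so in particular $g_\infty \in C^4(\overline{\Omega(g_\infty)})$. Hence $t_\infty \in I$.

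The main obstacle I anticipate is the convergence of the free boundaries and verifying that assumption (H-2) — non-empty vanishing region — is preserved in the limit, i.e. that $\Omega(g_\infty)$ does not degenerate or swallow $\Lambda(g_\infty)$ and that $\Gamma(g_k) \to \Gamma(g_\infty)$ in a strong enough topology to pass to the limit in the free-boundary condition $\Pc[g] = \theta g_\nu^3 \kappa = h$. This is where the a priori estimates do the essential work: the lower bound $|Dg| \geq c$ on $\overline{\Omega(g)}$ prevents the free boundary from degenerating (it guarantees $\Lambda(g_\infty) \supset B_\rho(0)$ has nonempty interior and that $g_\infty$ grows linearly away from $\Gamma(g_\infty)$), while the uniform $C^\alpha$ curvature bound gives the compactness of the free boundaries in $C^{1,\alpha}$. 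Once these are in hand, passing to the limit in both the interior equation and the free-boundary condition is routine, and the bootstrap to (H-4) follows from the already-quoted Schauder theory.
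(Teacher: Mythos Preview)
Your proposal is correct and follows essentially the same compactness argument as the paper: invoke the uniform $C^{2,\alpha}_s$ estimates of Theorems \ref{theorem-c-2} and \ref{theorem-c-2a}, extract a convergent subsequence of the free boundaries and then of the $g_k$, use the non-degeneracy bound \eqref{eqn-grad0} to identify the limit free boundary with $\Gamma(g_\infty)$, and pass to the limit in the equation. Your write-up is considerably more detailed than the paper's (which is only a few lines), and your explicit bootstrap via Theorem \ref{theorem-c-smooth} to recover (H-4) fills in a step the paper leaves implicit; one small imprecision is the claim $\|g_k\|_{C^2_{\partial\Omega}} = \|\bar\vp\|_{C^2_{\partial\Omega}}$, since this norm includes normal derivatives not determined by $\bar\vp$ alone---but these are uniformly controlled by standard boundary barriers for the Monge--Amp\`ere equation near the strictly elliptic boundary $\partial\Omega$, so the argument goes through.
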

\begin{proof}
Let   $\{t_k\}\subset I$ be a sequence converging to $t_0$. Then,  there is a
sequence of solutions $\{g_k\}$ of (MAPt), $t=t_k$, and their 
free-boundaries $\Gamma (g_k)$ which  have
uniform estimates depending only on the boundary data and the
domain $\Omega$. First we can extract a converging
subsequence of the free boundaries $\Gamma (g_{k_i})$ to
$\Gamma_0$ and, among them, extract converging
subsequence   $g_{k_{i_j}}$ converging to a function $g_0$. The 
 non-degeneracy estimate in \eqref{eqn-grad0} implies that 
$\Gamma_0=\Gamma(g_0)$  and the uniform
$C^{2,\alpha}_{s}$-estimate in Theorem \ref{theorem-c-2a} implies   that $g_0$ is   a solution of
\eqref{eqn-mapt} with $t=t_0$. Hence $t_0\in I$.
\end{proof}

The {\em  openness} of $I$ will be proved in Section \ref{sec-stability} through the stability in the parameter $t$, Theorem \ref{theorem-stability}, which  is similar to Theorem 8.5 in \cite{DH2}.

The method of continuity then implies   the following existence of classical solutions.

\begin{thm}[Existence of a classical solution] Under the assumptions of Theorem \ref{theorem-main},  there is a classical solution $g$ of (\ref{eqn-g}) which satisfies
the estimates in Theorems \ref{theorem-c-2}, \ref{theorem-c-2a}, and
\ref{theorem-c-smooth}.
\end{thm}

The rest of the paper will be devoted to the proof of Theorems \ref{theorem-c-2} - Theorem \ref{theorem-c-smooth} which,  in  particular, imply Theorem \ref{theorem-main}. 

\section{Optimal  Estimates}\label{sec-estimates}
In this section we are going to prove the  optimal a'priori estimates stated in  Theorem \ref{theorem-c-2}. We will assume,  throughout this section, that $g \in C^4(\overline{\Omega(g)})$
is a classical solution of equation \eqref{eqn-g2} in $\Omega$ with  $0 < p < 2$ and  $h \in C^2(\Omega)$ satisfying  \eqref{eqn-lambda}. 
In  addition, we will assume  that $g$ satisfies the assumptions (H-1)--(H-4) introduced in  section \ref{sec-classic2}.
We recall the notation $\Omega(g) = \{ x\, | \, g(x) >0\}$ and 
$\|g\|_{C^2_{\partial \Omega}} = \max_{\partial \Omega} (|D_{ij} g|  + |D_i
g| + g)$.

\smallskip 


We will first establish an upper bound on the
first order derivative $|Dg|$.

\begin{lem}\label{lemma-bii1} Under the assumptions of  Theorem \ref{theorem-c-2}, we have
$$\max_{\Omega} |Dg| \leq C(\rho,\theta, \lambda, \|g\|_{C^2_{\partial \Omega}},\|h\|_{C^1} ). $$
\end{lem}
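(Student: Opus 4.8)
The plan is to bound $|Dg|$ on $\overline{\Omega(g)}$ by a maximum principle argument applied to an auxiliary function built from $|Dg|^2$ and $g$ itself. Since $g$ is a classical solution, it vanishes together with a controlled number of its derivatives on $\Gamma(g)$, and is smooth up to $\partial\Omega$ with $\|g\|_{C^2_{\partial\Omega}}$ prescribed; the free boundary relation $\theta g_\nu^3\kappa = h$ already gives pointwise control of $g_\nu$ on $\Gamma(g)$ from below (via $h\geq\lambda$) once an upper bound on the curvature $\kappa$ is available, and from above if $\kappa$ is bounded below. So the real issue is to propagate an interior/boundary gradient bound. First I would write the equation \eqref{eqn-g2} in the form $\mathcal P[g]=h$ and compute the linearization: differentiating in a direction $e_k$, the quantity $w=g_{e_k}$ satisfies a linear equation $a^{ij}w_{ij} + (\text{first order}) = h_{e_k}$ where $a^{ij}$ is, up to the lower-order $\theta$-terms, the cofactor matrix of $D^2g$ times $g$ plus the $\theta$-correction — precisely the matrix whose positivity is asserted in Theorem \ref{theorem-c-2} but which here we only know to be nonnegative-definite from convexity of $f$ (hence of the relevant combination). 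This degeneracy on $\Gamma(g)$ is exactly why one cannot simply quote interior elliptic estimates.

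The key steps, in order: (1) Express $\mathcal P[g]$ and its derivatives; verify that the coefficient matrix $A=(a^{ij})$ governing $g_{e_k}$ is nonnegative-definite (using that $D^2 g\geq 0$ on $\Omega(g)$, which follows from convexity of $f=(q^{-2/3}g)^q$ together with the formula relating $D^2 f$ to $D^2 g$ and $Dg\otimes Dg$). (2) Form the test function $\Phi = |Dg|^2 + K g$ (or a variant $\Phi=|Dg|^2 e^{\beta g}$, or $\Phi = |Dg|^2 + \mu |x|^2$ exploiting (H-1), $\Omega\subset B_1$, and (H-2), $B_\rho(0)\subset\Lambda(g)$) and compute $A^{ij}\Phi_{ij}$ using the Bochner-type identity $A^{ij}(|Dg|^2)_{ij} = 2A^{ij}g_{ik}g_{jk} + 2g_{e_k}\,A^{ij}(g_{e_k})_{ij}$; the first term is $\geq 0$ by $A\geq 0$, and the second is controlled by $\nabla h$ and lower-order terms through the differentiated equation. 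Choosing the constant $K$ (depending on $\theta,\lambda,\|h\|_{C^1}$) large enough makes $\Phi$ a subsolution of the operator $L=A^{ij}\partial_{ij}+(\text{drift})$ up to a bounded inhomogeneity, so $\Phi$ attains its maximum on $\partial\Omega(g)=\partial\Omega\cup\Gamma(g)$. (3) On $\partial\Omega$, $\Phi$ is bounded by $\|g\|_{C^2_{\partial\Omega}}$ directly. On $\Gamma(g)$, $|Dg|^2=g_\nu^2$ and $g=0$, so one needs an a priori upper bound for $g_\nu$ on the free boundary; here I would use the super/sub-solution comparison — compare $g$ from above with a radial barrier (the pressure of $\psi_1$-type profiles $(|x|^2-\rho_0^2)_+^{?}$, scaled so it dominates $\bar\varphi$ on $\partial\Omega$ and vanishes on a ball inside $\Lambda(g)$) whose normal derivative on its own free boundary is explicit; the comparison principle, Theorem \ref{theorem-c-comparison}, then gives $g\leq$ barrier, hence $g_\nu\leq(\text{barrier})_\nu$ at $\Gamma(g)$, a constant depending only on $\rho,\theta,\|\bar\varphi\|,\lambda$.

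The main obstacle I anticipate is step (2)–(3) at the free boundary: the operator $A$ degenerates precisely on $\Gamma(g)$, so the classical Hopf/maximum-principle machinery does not apply verbatim there, and one must argue either by working in the $(z,y)$ coordinates of \eqref{eqn-q10} where the degeneracy is of the model form $z\,\partial_{zz}$ (so that the boundary $z=0$ is "characteristic" and no boundary gradient term is needed — the equation itself forces the behavior), or by the barrier comparison above which sidesteps the PDE argument at $\Gamma(g)$ entirely. I expect the cleanest route is: get the upper bound for $|Dg|$ in the interior and up to $\partial\Omega$ by the maximum principle for $\Phi$ with the maximum necessarily on $\partial\Omega$ unless it is on $\Gamma(g)$, and independently bound $\limsup_{x\to\Gamma(g)}|Dg|(x)$ by the radial barrier via Theorem \ref{theorem-c-comparison}. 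Combining the two disposes of all of $\overline{\Omega(g)}$ and yields the asserted constant $C(\rho,\theta,\lambda,\|g\|_{C^2_{\partial\Omega}},\|h\|_{C^1})$.
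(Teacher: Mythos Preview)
Your approach differs from the paper's and has a genuine gap at the free boundary.

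The paper does \emph{not} use an additive test function like $|Dg|^2+Kg$; it uses the multiplicatively weighted quantity $M=r^2|Dg|^2$ (with $r^2=x^2+y^2\ge\rho^2$ on $\Omega(g)$ by (H-2)), and in fact remarks explicitly that one \emph{cannot} bound $|Dg|^2$ itself by the maximum principle when $h\not\equiv 1$, which is why the weight is introduced. At an interior maximum of $M$ one rotates so that $g_y=0$, reads off $g_{xx},g_{xy}$ from $M_x=M_y=0$, substitutes into the equation to get $g_{yy}$, differentiates the equation once to eliminate third derivatives from $a_{ij}M_{ij}$, and obtains a rational expression in $M$ whose leading coefficient is strictly positive; hence $a_{ij}M_{ij}>0$ if $M$ is large, a contradiction. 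At a putative maximum on $\Gamma(g)$ the paper does \emph{not} invoke barriers: it computes the tangential second derivative $M_{yy}$ directly, using $g=0$, $g_{yy}=h/(\theta g_x^2)$, and the once-differentiated equation to express $g_{xyy}$, and again finds $M_{yy}>0$ when $M$ is large.

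Your barrier step is where the plan breaks. To bound $g_\nu$ at a point $P\in\Gamma(g)$ from above by comparison with a radial super-solution $g_1$ vanishing on $B_{\rho_0}$ (with $\rho_0<\rho$), you would need the free boundaries to touch at $P$; but $g_1(P)>0$ since $P\notin B_{\rho_0}$, so from $g\le g_1$ and $g(P)=0$ you get only $g(P+t\nu)/t\le g_1(P+t\nu)/t\to+\infty$, which yields nothing. Trying instead to use the free-boundary relation $\theta g_\nu^3\kappa=h$ to bound $g_\nu$ from above would require a \emph{lower} bound on the curvature $\kappa$ of $\Gamma(g)$, which is not available a priori (convexity gives only $\kappa\ge 0$). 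Finally, a radial barrier that dominates $\bar\varphi$ on $\partial\Omega$ must be taken with a large constant, but $\mathcal P$ is cubic in $g$, so scaling up destroys the super-solution property. The paper's direct computation of $M_{yy}$ at $\Gamma(g)$ sidesteps all of this.
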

\begin{proof}
We set $M :=r^2 \, |Dg|^2=(x^2+y^2)\, (g_x^2 + g_y^2)$. We will
show  that $M$ attains its maximum at $\partial \Omega$. This
readily implies  the desired bound, since $r^2=x^2+y^2 \geq
\rho^2$ on $\Omega(g)$.  (Notice that we cannot
bound $|Dg|^2$ from above by the maximum principle,  if $h \neq 1$, so we need to
multiply by $r^2$). 

Let $P_0$ be the  maximum point of  $M$ on $\overline {\Omega(g)}$.
Assume first that $P_0 \in \Omega(g)$. We may also assume, by
rotating the coordinates,  that
\begin{equation}\label{eqn-bii1}
g_y=0 \quad \mbox{and} \quad g_x >0 \qquad \mbox{at} \,\, P_0.
\end{equation}
 Also, since $M_x=M_y= 0$ at $P_0$, we have
\begin{equation}\label{eqn-bii2}
g_{xx} =-\frac {x\, g_x}{r^2}, \quad \mbox{and} \quad
g_{xy}=-\frac {y\, g_x}{r^2} \quad \mbox{at} \,\, P_0
\end{equation}
which  combined with \eqref{eqn-bii1} and \eqref{eqn-g2} gives
that
$$
\left (\te  g_x^2 - \frac{xg g_x}{r^2} \right )  g_{yy} -
\frac{y^2 g  g_x^2}{r^4}=h
$$
and hence
\begin{equation}\label{eqn-bii100}
g_{yy} = \frac{r^4\, h + y^2\,g\,  g_x^2}{r^2\, g_x ( \te \, r^2
g_x -xg )}
 \quad \mbox{at} \,\, P_0.
\end{equation}
Let
\begin{equation}\label{eqn-matrix}
A=(a_{ij})=(g\, g_{ij} + \te \, g_ig_j)^{t}
\end{equation}
denote the transpose  of the matrix  $G=(G_{ij})=(g\, g_{ij} +
\te \, g_ig_j)$. This is the second order derivative coefficient
matrix of the linearization of equation \eqref{eqn-g2}.
Differentiating equation \eqref{eqn-g2} to eliminate the third order
derivatives on $a_{ij} \, M_{ij}$ and  using  \eqref{eqn-bii1},
\eqref{eqn-bii2} and \eqref{eqn-bii100}, we find, after a  direct
calculation,   that
\begin{equation}\label{eqn-bii123}
a_{ij} \, M_{ij} = \sum_{i=0}^6 \frac{b_i\, M^i}{D}
\end{equation}
with
$$ D= M  r^5 (M\te r -   x g) \quad \mbox{and} \quad b_6= 2 \te (\te x^2 +   y^2)$$
and
$$  |b_i| \leq C(\te, \|h\|_1), \quad  i=1,...,5.$$
Since $r \geq x$, assuming that $M > \te^{-1} \max_{\Omega}  g$ we
conclude that $D >0$ at $P_0$. Since the leading order term in
\eqref{eqn-bii123}, when $M$ is sufficiently large, is $(b_6\,
M^6)/D$  and $b_6 >0$ we conclude that   either $M \leq C(\rho,p,
\lambda, \|g\|_{C^2_{\partial \Omega}},\|h\|_{C^1} )$ at $P_0$ or
$a_{ij} \, M_{ij} >0$. In the latter case $P_0$ cannot be a
maximum point, contradicting our assumption.

Assume next that $P_0 \in \Gamma (g)$ and that $M >0$ at
$P_0$. We may assume again that \eqref{eqn-bii1} holds at $P_0$,
i.e.  $y$ is a tangential direction to $\Gamma (g)$.
Hence, $M_y=0$, $M_x \leq 0$ and $M_{yy} \leq 0$ also hold at
$P_0$. In addition, since $g=0$ at $P_0$,  equation \eqref{eqn-g2}
and \eqref{eqn-bii1} imply that $\te\, g_x^2\, g_{yy} =h $ at
$P_0$. We conclude, after some direct calculations, that
\begin{equation}\label{eqn-bii3}
g=0, \,\, g_{xx} \leq  - \frac{x g_x}{r^2}, \,\, g_{xy}= -
\frac{y g_x}{r^2}, \,\, g_{yy}=\frac {h}{\te g_x^2} \quad
\mbox{at} \,\, P_0
\end{equation}
and
\begin{equation}\label{eqn-bii3'}
M_{yy} = 2\, r^2\,  g_x \, g_{xyy} + \frac{2 (x^2-2y^2)\,
g_x^2}{r^2} + \frac{ 2 r^2 h^2}{\te^2 g_x^4} \leq 0, \qquad
\mbox{at} \,\, P_0.
\end{equation}
On the other hand, differentiating equation \eqref{eqn-g2} with
respect to $x$ and using \eqref{eqn-bii1} and \eqref{eqn-bii3} we
find  that
$$ \te \, g_x^2 \, g_{xyy} - \frac{(1+2\te) y^2 g_x^3}{r^4} - h_x - \frac{(1+2\te) x h}{r^2}=0, \quad \mbox{at} \,\, P_0$$
which implies that $g_{xyy}=-\te^{-1} (1+2\,\te)\, g_{xx} / \,
g_x^3$. Substituting in \eqref{eqn-bii3'} gives that
$$ M_{yy} = \frac{ 2(\te x^2+y^2) M^2}{\te r^4} + \frac{2 (1+2\te)r x h +r^3 h_x}{\te \sqrt{M}} + \frac{2 r^6 h^2}{M^2} \leq 0$$
which is impossible,  if we assume that $M$ is sufficiently large,
depending on the data. This finishes the proof.
\end{proof}

We will next provide a bound from below on  $|Dg|$.

\begin{lem}\label{lemma-bii2}  Under the assumptions of Theorem \ref{theorem-c-2}, we have
$$ |Dg| \geq c(\rho, \te, \lambda, \max_{\partial \Omega} |Dg|,\|h\|_{C^1} ) > 0, \qquad \mbox{on} \,\, \Omega(g). $$
\end{lem}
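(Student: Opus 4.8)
The plan is to obtain the lower bound on $|Dg|$ by a barrier/maximum principle argument, exploiting the fact that on the free boundary $\Gamma(g)$ we already control $|Dg|$ from below via the classical-solution condition $\theta\, g_\nu^3\,\kappa = h \geq \lambda$ together with an upper bound on the curvature $\kappa$ of $\Gamma(g)$, and on $\partial\Omega$ we control $|Dg|$ by the hypothesis $\max_{\partial\Omega}|Dg|$ (and the convexity forces $g_\nu > 0$ there too). So the quantity $|Dg|^2 = g_x^2 + g_y^2$ attains its minimum over $\overline{\Omega(g)}$ either on $\partial\Omega$, where we are done by assumption, or on $\Gamma(g)$, where the geometric identity $\theta g_\nu^3\kappa = h$ gives $g_\nu = (h/(\theta\kappa))^{1/3} \geq (\lambda\theta^{-1}\kappa^{-1})^{1/3}$, so it suffices to bound $\kappa$ from above; or at an interior point of $\Omega(g)$, which I want to rule out by showing $|Dg|^2$ is a supersolution of the linearized operator wherever it is small.

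First I would set $N := |Dg|^2 = g_x^2 + g_y^2$ and compute $a_{ij} N_{ij}$, where $A = (a_{ij})$ is (the transpose of) the linearization coefficient matrix $g\,g_{ij} + \theta\,g_ig_j$ as in \eqref{eqn-matrix}, differentiating \eqref{eqn-g2} twice to eliminate the third-order terms exactly as in Lemma \ref{lemma-bii1}. The point of the pressure scaling is that the bad terms carry a factor of $g$ or $|Dg|^2$, so at an interior minimum point $P_0$ of $N$ where $N$ is small one should get $a_{ij}N_{ij} \geq c\, N^{-k}(\text{positive}) - C > 0$ for $N$ below a data-dependent threshold, contradicting that $P_0$ is a minimum (since $a_{ij}N_{ij} \leq 0$ there, $A$ being positive semidefinite). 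Concretely, at such $P_0$ one may rotate so that $g_y = 0$, $g_x > 0$, use $N_x = N_y = 0$ to solve for $g_{xx}, g_{xy}$ in terms of lower order quantities, feed these into \eqref{eqn-g2} to solve for $g_{yy}$, and then the second-derivative test plus the differentiated equation produces an expression of the form $a_{ij}N_{ij} = \sum b_i N^i / D$ whose leading behavior as $N \to 0$ is governed by a strictly positive term — again parallel to the computation in Lemma \ref{lemma-bii1} but now examining the small-$N$ regime rather than the large-$M$ one.

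The main obstacle I expect is the free-boundary case: ruling out that $N$ is small at a point $P_0 \in \Gamma(g)$ requires an a priori upper bound on the curvature $\kappa$ of $\Gamma(g)$, and also handling the fact that $A$ degenerates there (one eigenvalue behaves like $\theta g_\nu^2$, the tangential one like $g_{\tau\tau}$, while the $g\,g_{\nu\nu}$ term vanishes). A clean way around this is to argue that at a boundary-minimum point one still has $N_\tau = 0$, $N_\nu \geq 0$, $N_{\tau\tau} \geq 0$ along $\Gamma(g)$, differentiate \eqref{eqn-g2} tangentially along the free boundary, and combine with $\theta g_\nu^3 \kappa = h$ to derive an inequality that forces $g_\nu$ (hence $N$) to be bounded below purely in terms of $\rho, \theta, \lambda, \|h\|_{C^1}$ and the already-established upper bound $\max_\Omega|Dg|$ from Lemma \ref{lemma-bii1} (which controls $\kappa$ from above through $\kappa = h/(\theta g_\nu^3) \leq \lambda^{-1}\theta^{-1}(\max|Dg|)^{-3}$... wait, that is a lower bound on $g_\nu$ only if $\kappa$ is bounded above, so one genuinely needs the geometric/convexity input that $\Lambda(g)$ is convex — indeed $B_\rho(0) \subset \Lambda(g)$ and $f$ convex forces $\Lambda(g)$ convex, and a convex set containing a disc of radius $\rho$ inside $\Omega \subset B_1$ has boundary curvature controlled, but from below, not above). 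This is the delicate point; I would resolve it by instead using the \emph{comparison lemma} with the explicit radial subsolution of the Example — $c_1(|P|^2 - \rho^2)_+^q$ type barriers centered appropriately and touching from inside — to force a definite slope of $g$ across $\Gamma(g)$ from below, thereby bypassing any need to bound $\kappa$ from above directly. Thus the real work splits as: (i) the interior estimate via the differentiated-equation / maximum principle computation, which is routine given Lemma \ref{lemma-bii1}'s template, and (ii) the boundary-gradient lower bound, which I expect to handle by an explicit barrier comparison rather than a differential inequality, and which is where the convexity of $\Lambda(g)$ and the hypothesis $B_\rho(0) \subset \Lambda(g)$ are used essentially.
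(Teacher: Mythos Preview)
Your interior argument with $N=|Dg|^2$ is viable (indeed cleaner than the paper's interior step): at an interior minimum with $g_y=0$, $g_x>0$ one gets $g_{xx}=g_{xy}=0$, the equation forces $g_{yy}=h/(\theta g_x^2)$, and differentiating once yields $a_{ij}N_{ij}=2g_xh_x+2h^2/(\theta g_x^2)$, which is strictly positive whenever $g_x^3<\lambda^2/(\theta\|h\|_{C^1})$, contradicting the minimum.

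The gap is the free-boundary case, and neither of your two routes closes it. For the differential-inequality route: at a boundary minimum with $g=0$, $g_y=0$, $g_x>0$ you get $g_{xy}=0$, $g_{xx}\ge 0$, $g_{yy}=h/(\theta g_x^2)$, and then a short computation using the once-differentiated equation gives
\[
N_{yy}=\frac{2h_x}{\theta g_x}-\frac{2(1+2\theta)g_{xx}g_{yy}}{\theta}+\frac{2h^2}{\theta^2 g_x^4}.
\]
As $g_x\to 0$ the last term forces $N_{yy}\to +\infty$, so the constraint $N_{yy}\ge 0$ is vacuous and yields no lower bound. For the barrier route: the comparison principle (Theorem~\ref{theorem-c-comparison}) already \emph{assumes} $\Omega(g_2)\subset\Omega(g)$, so you would need a family of explicit classical subsolutions whose free boundaries can be positioned to touch $\Gamma(g)$ at a prescribed point $P_0$ while remaining below $g$ on $\partial\Omega$. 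A radial barrier vanishing on a ball through $P_0$ that contains $\Lambda(g)$ has boundary condition $\theta(g_2)_\nu^3\kappa_2\ge h$, i.e.\ $(g_2)_\nu\ge (hR/\theta)^{1/3}$ where $R$ is the radius; for $R$ large this exceeds the known upper bound on $|Dg|$, so $g_2\le g$ must fail, while for $R$ bounded you have no a~priori control that such a ball contains $\Lambda(g)$. The construction is not circular but it is not ``explicit'' either, and you have not indicated how to make it uniform in $P_0$.

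What the paper does instead is apply the maximum principle not to $|Dg|^2$ but to the \emph{weighted radial derivative}
\[
M:=r^{-2q}(xg_x+yg_y)=r^{-2q}g_r,
\]
with an integer $q>1$ chosen large depending only on $\|h\|_{C^1}$ and $\theta$. The weight is the whole point at the free boundary: after the same sequence of substitutions one obtains
\[
M_{yy}\le b_1 M+\frac{b_0}{M^2},\qquad b_0=\frac{x^2\bigl([1+4\theta-2q(1+2\theta)]\,h+\theta yh_y+\theta xh_x\bigr)}{\theta^2 r^{6q}},
\]
and taking $q$ large makes $b_0<-1$, so $M_{yy}\ge 0$ forces $M\ge c>0$. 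The same choice of $q$ handles the interior minimum by making the constant term of $a_{ij}M_{ij}$ negative. Your quantity $N$ corresponds essentially to $q=0$ (and full gradient rather than radial part), which is precisely why the sign goes the wrong way on $\Gamma(g)$.
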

\begin{proof}
For $q >0$ we set
$$M := (x^2+y^2)^{-q} \, (x\, g_x + y\, g_y)=r^{-2q}\, g_r, \qquad (x,y) \in \Omega(g)$$
with $g_r$ denoting the radial derivative of $g$.

\noindent{\em Claim:} There exists an integer  $q >1$ which
depends only on data (on $\|h\|_{C^1}$ and $\te$) and such that $M
\geq c(\rho, \te, \max_{\partial \Omega} |Dg|,\|h\|_{C^1} ) > 0$
on $\Omega(g)$.
Since, from condition (H-2)  we have $r^2=x^2+y^2 > \rho^2$ for
any $(x,y)\in \Omega(g)$, the claim readily implies  the desired
bound from below on $|Dg|$.

We will next prove the claim by the maximum principle. Let $P_0=(x_0,y_0)$ be an interior  minimum  point of  $M$ in $
\Omega(g)$.  We may  assume, by rotating the coordinates,  that
\begin{equation}\label{eqn-bii4}
y=0 \quad \mbox{and} \quad x > \rho >0 \qquad \mbox{at} \,\, P_0.
\end{equation}
Since  $M_x=0$ and $M_y= 0$ at $P_0$  we have
$$
x\, g_{xx} - (2q-1) \, g_x =0 \quad \mbox{and} \quad x\, g_{xy}+
g_y=0\qquad \mbox{at} \,\, P_0
$$
and hence
\begin{equation}\label{eqn-bii5}
g_{xx} = \frac{(2q-1) \, g_x}{x}  \quad \mbox{and} \quad g_{xy}
-\frac{g_y}x=0 \qquad \mbox{at} \,\, P_0.
\end{equation}
Substituting the above  to equation \eqref{eqn-g2},  using also
\eqref{eqn-bii4}, gives
\begin{equation}\label{eqn-bii555}
g_{yy} = \frac{x^2(1+h) + g_y^2 \, [ g - (2q+1)\te x g_x ]}{ x
g_x \, [(2q-1) g +\te x g_x] }  \qquad \mbox{at} \,\, P_0.
\end{equation}
Let $A=(a_{ij})$ be the matrix defined in $\eqref{eqn-matrix}$.
Differentiating equation \eqref{eqn-g2}  and  \eqref{eqn-bii4} - \eqref{eqn-bii555} we
find, after several direct calculations,   that
$$L:=a_{ij} \, M_{ij} =  \Sigma_{i=0}^4 \frac{b_i \, M^i}{D} $$
with $D=(M \te x^{2q} + (2q-1) g)  >0$ and
$$|b_i| \leq C(\rho, \te, \lambda,  \max_{\partial \Omega} |Dg|, \|h\|_{C^1}) $$
and
$$b_0= - (2q-1)\, x^{-2q-2}\, g \, [ 2(q-2)x^2 (h+1) + 2q\, g g_y^2 -x^3h_x].$$
By choosing  $q >1$ sufficiently large (depending on
$\|h\|_{C^1}$) so that  $$2(q-2) (h+1)  - x\, h_x >0$$ we can make
$b_0 < 0$. We   conclude from the above that $L \leq 0$ unless
$M(P_0) \geq c >0$, for some  constant $c=c(\rho, \theta,\lambda, 
\max_{\partial \Omega} |Dg|, \|h\|_{C^1})$.  This shows that an
interior minimum of $M$  must satisfy $\min M \geq  c(\rho, \theta, \lambda,
\max_{\partial \Omega} |Dg|) >0$.

Assume next that $P_0 \in \Gamma(g)$ is a minimum point for
$M$. We may assume this time that  \eqref{eqn-bii1} holds at
$P_0$. Hence,  \begin{equation}\label{eqn-bii6} M_y=
\frac{-2qxyg_x + r^2 ( y\, g_{yy} + x\, g_{xy}) }{r^{2(q+1)}} =0
\end{equation}
and
\begin{equation}\label{eqn-bii7}
M_x = \frac{[(1-2q)\, x^2+y^2] \, g_x + r^2 (y\, g_{xy} + x
g_{xx})}{r^{2(q+1)}} \geq 0
\end{equation}
and also, by equation \eqref{eqn-g2},  $\te \, g_x^2\, g_{yy} =
h$ at $P_0$. Substituting $g_{yy} =h/(\te\, g_x^2)$ in
\eqref{eqn-bii6} and solving   with respect to $g_{xy}$  gives
\begin{equation}\label{eqn-bii8}
g_{xy} = - \frac{y h}{\te x g_x^2} +\frac{2 p  y g_x}{r^2}.
\end{equation}
Substituting this in \eqref{eqn-bii7} and solving with respect to
$g_{xx}$ gives
\begin{equation}\label{eqn-bii9}
g_{xx} \geq  \frac{y^2 h}{\te \, x^2 g_x^2} + \frac{[(2q-1)x^2-
(2q+1) y^2] g_x}{r^2\, x}.
\end{equation}
Here we have used that $x >0$ at $P_0$. This follows from
assumption  \eqref{eqn-bii1},  (H-2) and the convexity of $\Lambda
(g)$.

We next differentiate equation \eqref{eqn-g2} with respect to $x$
and use that $g=0$, $g_y=0$, $g_{yy} =h/(\te g_x^2)$ and
\eqref{eqn-bii8}  to conclude that
\begin{equation}\label{eqn-bii91}
g_{yyy}=\frac{h_y}{\te g_x^2}, \qquad \mbox{at} \,\, P_0.
\end{equation}
Also, we differentiate equation \eqref{eqn-g2} with respect to
$y$ and use that $g=0$, $g_y=0$, $g_{yy} =h/(\te g_x^2)$,
\eqref{eqn-bii8} - \eqref{eqn-bii91},   to conclude that
\begin{equation}\label{eqn-bii10}
g_{xyy} \leq  \frac{h_x}{\te\, g_x^2} + \frac{ (1+2\te)(1-2q)
h}{\te^2\, x\, g_x^2} + \frac{ 4(1+2\te) q^2 y^2 \, g_x}{\te \,
r^4}.
\end{equation}
We next  differentiate $M$ twice with respect to $y$ and use 
\eqref{eqn-bii8} - \eqref{eqn-bii10}, $g_y=0$, $g=0$  and $g_{yy}
=h/ (\te g_x^2)$. We obtain,  after several direct calculations,  that
$$M_{yy} = \frac{x\, g_{xyy}}{r^{2q}} - \frac{2qx [x^2 + (2q-1)y^2 ] g_x}{r^{2(q+2)}} + \frac{2h+yh_y}{\te r^{2q} g_x^2}$$
at $P_0$. Substituting \eqref{eqn-bii10} and $g_x= M r^{2q}
x^{-1}$ in the above  gives, after several calculations, that
$$M_{yy} \leq b_1 \, M + \frac{b_0}{M^2}$$
with
$$b_1 = \frac{2q [ 2q+\te + \te q) y^2 - \te r^2 ]}{\te r^4}$$
and
$$b_0 = \frac{x^2 \, (  [1+4\te -2q(1+2\te) ] \,  h + \te  y h_y + \te x h_x ) }{\te^2 r^{6q}}.$$
Observe that since $h \geq \lambda >0$ we may choose $q$
sufficiently large, depending on $\|h\|_{C^1}$,  to make $b_0 < -
1$. Since, $M_{yy} \geq 0$ at $P_0$ and   $r \geq \rho$ (by our  assumption (H-2))
we conclude that $M \geq c(\rho, \theta,\lambda, 
\max_{\partial \Omega} |Dg|, \|h\|_{C^1}) >0$, finishing the proof.
\end{proof}

We will next establish sharp  upper bounds on  the second order
derivatives of $g$.
We begin by an upper  bound on the rotationally invariant quantity
$$G : =g_x^2 g_{yy} - 2 g_x  g_y  g_{xy} + g_y^2 g_{xx}= g_\nu^2\, g_{\tau\tau}$$
where  $\nu$, $\tau$ denote   the outer normal and tangential
directions  to the level sets of $g$ respectively.  Since the level
sets of $g$ are convex (because the function $f$ is convex) we
have $G \geq 0$.

\begin{lem}\label{lemma-bii3}  Under the assumptions of Theorem \ref{theorem-c-2},  the quantity
$G=g_\nu^2\, g_{\tau\tau}$ satisfies
$$\max_{\Omega(g)} G \leq  C(\theta, \rho, \lambda, \max_{\partial \Omega } G, \|h\|_{C^2}).$$
\end{lem}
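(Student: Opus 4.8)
The plan is to bound $G=g_\nu^2\,g_{\tau\tau}$ by a maximum principle applied to $G$ itself (or, should the lower order terms require it, to a bounded weighted version $M=|x|^{2k}G$, exactly as in the proofs of Lemmas~\ref{lemma-bii1} and \ref{lemma-bii2}), using the linearized operator of \eqref{eqn-g2}. Recall from \eqref{eqn-matrix} that this operator is $\mathcal{L}=a_{ij}\,\partial_{ij}$, where $(a_{ij})$ is the coefficient matrix of the linearization of \eqref{eqn-g2}, obtained from $G_{ij}:=g\,g_{ij}+\theta\,g_ig_j$. Since $f$ is convex and $q-1=\theta$, one has $g\,D^2g+\theta\,Dg\otimes Dg\ge 0$, so $(G_{ij})$, and hence $(a_{ij})$, is positive semidefinite; moreover a direct computation using \eqref{eqn-g2} gives $\det(a_{ij})=g\,h$, which is strictly positive on $\Omega(g)$, so $\mathcal{L}$ is (degenerate) elliptic there and $\mathcal{L}G\le 0$ at any interior maximum of $G$. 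On $\partial\Omega$ the quantity $G$ is controlled by $\max_{\partial\Omega}G$, and on $\Gamma(g)$ the free boundary condition in \eqref{eqn-c-g} gives $\theta\,G=h$, hence $G\le(\lambda\theta)^{-1}$ there; it therefore suffices to preclude a large interior maximum.

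So assume $G$ attains its maximum over $\overline{\Omega(g)}$ at $P_0\in\Omega(g)$ and rotate coordinates so that $g_y(P_0)=0$, $g_x(P_0)>0$, whence $G=g_x^2\,g_{yy}$ at $P_0$. The first order conditions $G_x=G_y=0$ at $P_0$ give $g_{yyy}=0$ and $g_{xyy}=-2(\det D^2g)/g_x$, while \eqref{eqn-g2} yields $\det D^2g=(h-\theta G)/g$; differentiating \eqref{eqn-g2} once in $x$ and once in $y$ then expresses the remaining third order derivatives $g_{xxx}$, $g_{xxy}$ through $g$, $Dg$, $D^2g$, $h$ and $Dh$. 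The key structural point is that, at $P_0$, the fourth order terms in $\mathcal{L}G$ assemble precisely into $g_x^2\,\mathcal{L}(g_{yy})$, and differentiating \eqref{eqn-g2} twice with respect to $y$ shows $\mathcal{L}(g_{yy})=h_{yy}-\mathcal{R}$ there, where $\mathcal{R}$ involves only derivatives of $g$ of order $\le 3$. Substituting all of the above — and using the determinant identity $g_{xy}^2=g_{xx}g_{yy}-\det D^2g$ to trade the (not yet controlled) off diagonal term $g_{xy}$ for $g_{xx}$ and lower order data — converts $\mathcal{L}G(P_0)$ into an explicit polynomial in $g,\,g_x,\,g_{xx},\,\theta,\,h,\,Dh,\,D^2h$; its top order part as $G\to\infty$ collects into a quadratic form in $g_{xx}$ of the schematic shape $c_1\,g\,g_{xx}^2-c_2\,g_x^2\,g_{xx}+c_3\,g_x^4/g$ with all $c_i>0$ and negative discriminant, the negativity of the discriminant being exactly where $\theta>0$ (equivalently $p<2$) is used. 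Hence this top order part is strictly positive, dominates the remaining lower order terms once $G(P_0)$ is large, and forces $\mathcal{L}G(P_0)>0$ — contradicting $\mathcal{L}G(P_0)\le 0$. Therefore $G(P_0)\le C(\theta,\rho,\lambda,\max_{\partial\Omega}G,\|h\|_{C^2})$, and the remaining cases (maximum on $\Gamma(g)$, where $\theta G=h$, or on $\partial\Omega$, which is the hypothesis) give the same bound.

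The main obstacle is this last substitution and the sign bookkeeping it requires: unlike in Lemmas~\ref{lemma-bii1}--\ref{lemma-bii2}, the second order quantities $g_{xx}=g_{\nu\nu}$ and $g_{xy}=g_{\nu\tau}$ entering $\mathcal{L}G$ are not a priori bounded at this stage (controlling them is precisely the content of the later lemmas and of Theorem~\ref{theorem-c-2}), so their contributions to the leading order of $\mathcal{L}G$ cannot simply be discarded; one must show, via the determinant identity and the positive definiteness of the quadratic form above, that they reorganize into manifestly nonnegative expressions whose sign is controlled solely by $\theta>0$ and the convexity of $f$. A secondary point, handled exactly as in the previous two lemmas, is that the terms produced by differentiating $h$ are genuinely lower order and are absorbed using $\|h\|_{C^2}$ together with the bound $|x|\ge\rho$ on $\Omega(g)$ — which is where a weight $|x|^{2k}$, if one is needed to kill those terms, and the dependence of the final constant on $\rho$, enter.
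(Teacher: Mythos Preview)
Your overall strategy --- apply the maximum principle to $G$ (or a close relative) with the linearized operator $a_{ij}\partial_{ij}$, then show the leading term at an interior maximum is positive --- matches the paper's. Two technical choices in the paper, however, differ from yours and substantially simplify the computation you leave unfinished.

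First, the paper does not work with $G$ alone but with $M:=G+|Dg|^2$. Since $|Dg|$ is already bounded by the previous lemmas this changes nothing asymptotically, but at the maximum point (with $g_y=0$) it gives $M=g_x^2(g_{yy}+1)$, and the extremality conditions $M_x=M_y=0$ then yield $g_{yyy}=0$ and $g_{xyy}=-2g_{xx}(1+g_{yy})/g_x$ in a form free of $g_{xy}$. Second, and more importantly, the paper exploits the affine invariance of both the equation and the quantity $G$ (the trick from Savin's Proposition~4.1) to arrange, by a further change of variables at $P_0$, that $g_{xy}=0$ there. With both simplifications in place the computation closes cleanly: one obtains
\[
a_{ij}M_{ij}=\frac{1}{M^2(1+g_x^2)}\sum_{i=0}^4 A_iM^i,\qquad A_4=3\theta(1+4\theta)\,g_x^4>0,
\]
with $|A_i|$ bounded for $i\le 3$; the two-sided bound on $g_x$ from Lemmas~\ref{lemma-bii1}--\ref{lemma-bii2} then finishes the argument.

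Your route --- keep $g_{xy}$ and eliminate it a posteriori via $g_{xy}^2=g_{xx}g_{yy}-\det D^2g$ --- is in principle viable (by the $y\mapsto -y$ symmetry $g_{xy}$ enters $\mathcal L G$ at $P_0$ only in even powers, so the determinant identity does suffice to remove it), but the payoff you assert, a quadratic $c_1 g\,g_{xx}^2-c_2 g_x^2 g_{xx}+c_3 g_x^4/g$ with \emph{negative} discriminant, is not established; you neither compute the $c_i$ nor check the discriminant sign, and the claimed dependence on $\theta>0$ alone is exactly the point that needs verification. The paper's two normalizations bypass this entirely and produce a single manifestly positive leading coefficient. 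Also note that your suggested fallback weight $|x|^{2k}$ is not what the paper uses here; the modification is $+|Dg|^2$, not a radial weight.
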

\begin{proof}
Set $M := G + |Dg|^2$. We will estimate $M$ by the maximum
principle. Since $g$ is assumed to be in $C^4( \overline{\Omega (g)})$,  and
hence $g\, g_{ij}=0$ at $\Gamma(g)$, it follows from
equation \eqref{eqn-g2} that $M=\te^{-1}\, h+|Dg|^2$ at $\Gamma(g)$.
Hence, we only need to control $M$ in the interior of
$\Omega(g)$. 
 Assuming that the maximum of $M$ is attained at an interior point $P_0 \in \Omega(g)$, we will show that
\begin{equation}\label{eqn-in1}
 a_{ij} \, M_{ij}  = \frac 1{M^2\, (1+g_x^2)} \, \sum_{i=0}^4 A_i \, M^i , \qquad \mbox{at} \,\, P_0
\end{equation}
with $A=(a_{ij})$ given by \eqref{eqn-matrix}, $A_4 \geq c (\theta, \rho, \lambda, \max_{\partial \Omega } G, \|h\|_{C^2}) >0$
and $|A_i| \leq C(\theta, \rho, \lambda, \max_{\partial \Omega } G, \|h\|_{C^2})$, for $i=0,...3$. Since $ a_{ij} \, M_{ij} \leq
0$ at a maximum point, this will imply the inequality $M \leq
 C(\theta, \rho, \lambda, \max_{\partial \Omega } G, \|h\|_{C^2})$, showing  that $\max_{\Omega(g)}  M \leq
C(\max_{\partial \Omega} M, \theta, \rho, \lambda, \max_{\partial \Omega } G, \|h\|_{C^2})$, as desired.

To prove \eqref{eqn-in1}, we begin by noticing that since $M$ is
rotationally invariant we may assume that \eqref{eqn-bii1} holds
at $P_0$, i.e.,  $g_x >0$, $g_y=0$ and $M=g_x^2\, (g_{yy}+1)$  at
$P_0$. Also, by a standard change of variables (see in the proof of Proposition 4.1 in  \cite{S}),
we may also assume that $g_{xy} =0$ at $P_0$. Using
\eqref{eqn-bii1} we compute that
$$ M_x=  g_x^2\, g_{xyy}+ 2\, g_x g_{xx}(1+g_{yy}) =0,  \quad M_y=g_x^2\, g_{yyy}=0 \quad \mbox{at}\,\, P_0
$$
implying that
\begin{equation}\label{eqn-bii13}
g_{xyy} = - \frac{2\, g_{xx} \, (1+g_{yy})}{g_x}, \qquad
g_{yyy}=0, \qquad \mbox{at}\,\, P_0.
\end{equation}
Differentiating equation \eqref{eqn-g2} in  $y$, using
\eqref{eqn-bii13} and solving with respect to $g_{xxy}$ we obtain
\begin{equation}\label{eqn-bii14}
g_{xxy} =  \frac{h_y}{g\, g_{yy}} \qquad \mbox{at}\,\, P_0
\end{equation}
since $g_y=g_{xy}=0$ at $P_0$. Also,  differentiating equation
\eqref{eqn-g2} in  $x$, using \eqref{eqn-bii13}-\eqref{eqn-bii14}
and solving with respect to $g_{xxx}$ we obtain
\begin{equation}\label{eqn-bii15}
g_{xxx} =  \frac{h_xg_x+ g_{xx}\, [(2\te-g_{yy})g_x^2+2g\,
(1+g_{yy})g_{xx}]}{g\, g_x g_{yy} } \quad \mbox{at}\,\, P_0.
\end{equation}
We next differentiate equation \eqref{eqn-g2} twice in $y$,
multiply it by $g_x^2$ and subtract it  from $a_{ij} \, M_{ij}$ to
eliminate fourth order derivatives, while use
\eqref{eqn-bii13}-\eqref{eqn-bii15} to eliminate third order
derivatives. After several direct calculations, using also that
$g_y=0=g_{xy}=0$ at $P_0$,  we obtain that
$$a_{ij} \, M_{ij} = \frac 1{M^2\, (1+g_x^2)} \, \sum_{i=0}^4 A_i \, M^i$$
with
$$A_4 = 3\te (1+4\te) g_x^4$$
and
$$ |A_i| \leq C(\te, \rho,\lambda,  \|h\|_{C^2}, \|g\|_{C^1}), \qquad i=0,\cdots,3.$$
By the previous two Propositions, $0 < c \leq g_x \leq C <
\infty$. Hence, $A_4 >0$, while $A_i$, $i=0, \cdots, 3$ bounded. This shows that
at an interior maximum point,  $M \leq  C(\theta, \rho, \lambda, \max_{\partial \Omega } G, \|h\|_{C^2})$, hence 
finishing  the proof of the Lemma.
\end{proof}

We will now bound
\begin{equation}\label{eqn-bii16}
Q :=\max_{\gamma} \, (g \, D_{\gamma\gamma} g + \te\,
|D_{\gamma}g|^2), \qquad \te=\frac{1+p}{2-p}
\end{equation}
from above, where the maximum in \eqref{eqn-bii16} is taken over
all  directions $\gamma$.  Note, that in terms of the function
$f$, we have 
$$Q = \max_{\gamma} \, (q^{1/3} \, f^{\frac{1-2p}{3}}\, f_{\gamma\gamma}), \qquad q=\frac{3}{2-p}.$$
In particular, since $f$ is convex, $Q \geq 0$.

\begin{lem}\label{lemma-bii4}
 Under the assumptions of Theorem \ref{theorem-c-2}, we have
\begin{equation}\label{eqn-bii17}
\max_{\Omega(g)} Q  \leq C( \te, \rho, \lambda, \|g\|_{C^2_{\partial
\Omega}}, \|h\|_{C^2}).
\end{equation}
\end{lem}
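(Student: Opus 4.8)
The plan is to bound $Q$ by the maximum principle applied to a suitably modified quantity, exactly in the spirit of Lemmas \ref{lemma-bii1}--\ref{lemma-bii3}. First I would note that on the free boundary $\Gamma(g)$ we have $g=0$, so $g\,D_{\gamma\gamma}g = 0$ there, and hence $Q = \te\,\max_\gamma |D_\gamma g|^2 = \te\,|Dg|^2$ on $\Gamma(g)$, which is already controlled by Lemma \ref{lemma-bii1}. On $\partial\Omega$ the quantity $Q$ is controlled by $\|g\|_{C^2_{\partial\Omega}}$. So it suffices to rule out an interior maximum of $Q$ (or rather of a corrected version of it) in $\Omega(g)$ that exceeds the stated bound.

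The key step is to localize: at an interior point $P_0\in\Omega(g)$ where $Q$ is attained, the maximum over directions $\gamma$ is realized by an eigenvector of the matrix $\mathcal G = (g\,g_{ij} + \te\, g_ig_j)$, so $Q = \mu_{\max}(P_0)$, the largest eigenvalue of $\mathcal G$ at $P_0$. By rotating coordinates I may assume $\gamma = e_1$ is that eigendirection, so that $g\,g_{12} + \te\,g_1g_2 = 0$ and $Q = g\,g_{11} + \te\,g_1^2$ at $P_0$, and the off-diagonal term of $\mathcal G$ vanishes there. The standard device (cf. the Korevaar-type argument, e.g. the change of variables in the proof of Proposition 4.1 of \cite{S}, as already invoked in Lemma \ref{lemma-bii3}) lets me further arrange $g_{12}=0$ at $P_0$, or at least reduce to a computation where the eigendirection is coordinate-aligned. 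I would then apply the linearized operator $a_{ij}\partial_{ij}$, with $A=(a_{ij})$ the matrix \eqref{eqn-matrix}, to the function $M := Q + |Dg|^2$ (adding $|Dg|^2$ as in Lemma \ref{lemma-bii3} to absorb lower-order junk and to make the boundary value on $\Gamma(g)$ clean). Using $M_x = M_y = 0$ at $P_0$ to solve for certain second derivatives, then differentiating \eqref{eqn-g2} once and twice to eliminate the third- and fourth-order derivatives appearing in $a_{ij}M_{ij}$, I expect an identity of the form
$$a_{ij}\,M_{ij} = \frac{1}{M^2\,(1+g_x^2)}\,\sum_{i=0}^{4} A_i\, M^i \qquad \text{at } P_0,$$
with top coefficient $A_4 \geq c(\te,\rho,\lambda,\dots) > 0$ thanks to $\te>0$ (equivalently $p<2$) and the already-established bounds $0 < c \leq |Dg| \leq c^{-1}$ from Lemmas \ref{lemma-bii1}--\ref{lemma-bii2}, and $|A_i|$ bounded by the data for $i\le 3$ using Lemma \ref{lemma-bii3} to control $g_{yy}$ and hence all of $D^2 g$ in the interior. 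Since $a_{ij}M_{ij}\le 0$ at an interior maximum, this forces $M(P_0)$, and therefore $Q$, to be bounded by the data, completing the argument.

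The main obstacle I anticipate is the bookkeeping in the third/fourth-order elimination: one must differentiate the fully nonlinear degenerate equation \eqref{eqn-g2} twice, contract against the linearization coefficients $a_{ij}$, and verify that the factor $g$ multiplying the highest derivatives cancels so that no negative powers of $g$ survive — the terms $g_{xxx}, g_{xxy}, g_{xyy}$ come out with a $1/(g\,g_{yy})$ type denominator (as in \eqref{eqn-bii14}--\eqref{eqn-bii15}), and one needs the structure of $Q$ itself (the presence of the factor $g$ in $g\,g_{\gamma\gamma}$) to see these combine into bounded quantities after multiplying through by $M^2$. A second, more subtle point is the reduction to a coordinate-aligned eigendirection: since $Q$ is a max over directions rather than a smooth function, one must justify the first- and second-derivative tests for $Q$ at $P_0$, which is handled by the usual trick of comparing with the smooth function obtained by fixing the extremal direction (this is exactly why Lemma \ref{lemma-bii3} cites the change of variables in \cite{S}). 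Once these are in place the rest is a direct, if lengthy, computation of the coefficients $A_i$, whose precise values beyond the sign of $A_4$ are immaterial.
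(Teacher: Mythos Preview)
Your overall maximum-principle strategy is right, but you miss the paper's key reduction and your two proposed normalizations are incompatible. The paper does \emph{not} rotate to the eigendirection of $\mathcal G$ and then shear; instead it first proves a \emph{Claim}: at an interior maximum $P_0$, either $Q(P_0)$ is already bounded by the data, or the maximizing direction $\gamma$ equals the normal $\nu$ to the level sets of $g$. This is argued by writing $\gamma = \lambda_1\nu + \lambda_2\tau$ and using the equation in the form $(g g_{\nu\nu}+\te g_\nu^2)\,g_{\tau\tau} = g g_{\nu\tau}^2 + h$ together with Lemmas~\ref{lemma-bii1}--\ref{lemma-bii3}: when $Q$ is large one gets $g_{\nu\tau} \le C\sqrt{g_{\nu\nu}}$ and $g_{\tau\tau}$ bounded, which forces $\lambda_2 = 0$. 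Once $\gamma = \nu$, rotating so that $\nu = e_1$ gives $g_y = 0$, and then the eigendirection condition $g g_{xy} + \te g_x g_y = 0$ (with $g>0$) yields $g_{xy}=0$ \emph{automatically} --- no Savin-type change of variables is invoked here. This is precisely where Lemma~\ref{lemma-bii3} enters; your remark that it ``controls $g_{yy}$ and hence all of $D^2g$'' is not right, since it bounds only $g_{\tau\tau}$, not $g_{\nu\nu}$ (which is essentially $Q/g$, the very thing under estimation).

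Your alternative --- rotate so the eigendirection is $e_1$, then Savin-shear to force $g_{12}=0$ --- fails because $Q$, being a maximal eigenvalue of $\mathcal G$, is rotation-invariant but \emph{not} shear-invariant: under a shear $A$ one has $\mathcal G \mapsto (A^{-1})^T \mathcal G\, A^{-1}$, the eigenvalues change, $P_0$ ceases to be a maximum of $Q$, and $e_1$ is no longer the eigendirection. Without $g_y = 0$ the elimination of third- and fourth-order derivatives becomes far messier, and your anticipated quartic identity $a_{ij}M_{ij} = \frac{1}{M^2(1+g_x^2)}\sum_{i\le 4} A_i M^i$ with $A_4>0$ is pattern-matching to Lemma~\ref{lemma-bii3} rather than something you can expect here. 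In fact the paper works with $Q$ itself (no $|Dg|^2$ added) and, thanks to the normalizations $g_y=g_{xy}=0$ furnished by the Claim, obtains the much simpler \emph{linear} relation $a_{ij}Q_{ij} = g\,[(1+h)\,Q + C\,]$ at $P_0$, from which the bound on $Q$ is immediate.
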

\begin{proof}
We begin by observing that since  $g \in C^4(\overline{ \Omega (g)} )$,
by Lemma \ref{lemma-bii1}, the bound $Q=\te \, |D g|^2 \leq C(\theta, \rho, \lambda, \|g\|_{C^2_{\partial \Omega}}
, \|h\|_{C^1})$ holds.

Assume next that the maximum of $Q$  is attained at an interior
point $P_0 \in \Omega(g)$ and at a direction $\gamma$, so that
$$Q(P_0) = g \, D_{\gamma\gamma} g + \te\, |D_{\gamma}g|^2.$$
Let $\nu$, $\tau$ denote the outward normal and tangential directions to the level sets of
$g$ respectively.

\noindent{\em Claim.} Either $Q(P_0) \leq C( \te, \rho, \lambda, \|g\|_{C^2_{\partial
\Omega}}, \|h\|_{C^2})$ or $ \gamma = \nu.$

To prove the claim,  we begin by expressing  the maximum
direction $\gamma$ as $\gamma = \lambda_1 \, \nu + \la_2 \,
\tau$, with $\la_1^2+\la_2^2=1$ so  that
\begin{equation}\label{eqn-bii18}
Q(P_0) = g\, [\la_1^2 \, g_{\nu\nu} + 2\, \la_1\la_2 \,
g_{\nu\tau} + \la_1^2 \, g_{\tau\tau}] + \la_1^2 \, g_\nu^2.
\end{equation}
Next, we use the equation \eqref{eqn-g2} expressed in the form
$$(g\, g_{\nu\nu} + \te\, g_\nu^2)\,  g_{\tau\tau} =  g\, g_{\nu \tau}^2  + h$$
and the bounds in Lemmas   \ref{lemma-bii1} -
\ref{lemma-bii3}  to first conclude the bound  $$Q(P_0) \leq C( \theta,
\rho, \lambda, \|g\|_{C^2_{\partial \Omega}},\|h\|_{C^2}\,  )$$ unless $g
g_{\nu\nu}$ is sufficiently large at $P_0$. If in particular  $g
g_{\nu\nu} > \te g_\nu^2$ at $P_0$, we  then conclude from
\eqref{eqn-g2} and Lemmas  \ref{lemma-bii2} and
\ref{lemma-bii3} that
$$g\, g_{\nu\tau}^2  \leq 2\, g\, g_{\nu\nu} \, g_{\tau \, \tau}
 \leq C( \te, \rho, \lambda, \|g\|_{C^2_{\partial
\Omega}}, \|h\|_{C^2}) \, g\, {g_{\nu\nu}}$$
showing the bound $g_{\nu\tau} \leq C( \te, \rho, \lambda, \|g\|_{C^2_{\partial
\Omega}}, \|h\|_{C^2})\, \sqrt{g_{\nu\nu}}.$ Using once more
the bound $g_{\tau\tau} \leq C( \te, \rho, \lambda, \|g\|_{C^2_{\partial
\Omega}}, \|h\|_{C^2}),
\theta \,)$, we readily deduce from \eqref{eqn-bii18} that
$Q(P_0)$ becomes maximum when $\lambda_2=0$, provided  it  is
sufficiently large, depending   only on $\|g\|_{C^2_{\partial
\Omega}}, \theta, \rho$. This proves the Claim.

If $Q(P_0) \leq C( \te, \rho, \lambda, \|g\|_{C^2_{\partial
\Omega}}, \|h\|_{C^2})$
then the proof of the Proposition is complete. Otherwise,  from the previous claim we may
assume  that $Q(P_0) = g\, g_{\nu\nu} + \te\, g_\nu^2$ and also,
since $\nu$ is the maximum direction, that $g\, g_{\nu\tau} + \te
\, g_{\nu} g_{\tau} = 0$ at $P_0$, implying that $g_{\nu\tau} =0$
at $P_0$, since $g >0$ and $g_{\tau}=0$ at $P_0$. Also, by
rotating the coordinates, we may assume that \eqref{eqn-bii1}
holds at $P_0$, i.e., the direction of the vector $\nu$ is that
of the $x$-axis.

We will show that
\begin{equation}\label{eqn-in2}
0 \geq a_{ij} \, Q_{ij}  = g\, [ (1+h)\, Q + C( \te, \rho, \lambda, \|g\|_{C^2_{\partial
\Omega}}, \|h\|_{C^2})] \qquad \mbox{at} \,\, P_0
\end{equation}
with $A=(a_{ij})$ given by \eqref{eqn-matrix}.  Since $h >0$, this implies
the bound  $Q \leq C(\te, \|h\|_{C^2}, \|g\|_{C^1})$  at $P_0$,  which
combined with Lemma \ref{lemma-bii2} implies  the  desired estimate. 

To prove \eqref{eqn-in2} let us first summarize that
\begin{equation}\label{eqn-bii19}
g_x >0, \quad g_y=0, \quad g_{xy}=0 \qquad \mbox{at} \,\, P_0.
\end{equation}
Also, since $Q= g\, g_{xx}+\te g_x^2$ at $P_0$, equation
\eqref{eqn-g2} together with conditions \eqref{eqn-bii19} imply
that
\begin{equation}\label{eqn-bii20}
g_{yy} = h \, Q^{-1} \qquad \mbox{at} \,\, P_0.
\end{equation}
We next differentiate $Q$ is $x$ and $y$ and use \eqref{eqn-bii19}
to deduce the equalities
$$ Q_x=g\, g_{xxx} +  (1+2\, \te)\, g_x \, g_{xx},   \qquad G_y=g\,  g_{xxy}=0 \quad \mbox{at}\,\, P_0
$$
from which we conclude that
\begin{equation}\label{eqn-bii21}
g_{xxx} = - \frac{(1+2\, \te)\, g_x \, g_{xx}}{g},   \qquad
g_{xxy}=0, \qquad \mbox{at}\,\, P_0.
\end{equation}
Also, differentiating equation \eqref{eqn-g2} in  $x$, using
\eqref{eqn-bii19} and  \eqref{eqn-bii21} gives
 \begin{equation}\label{eqn-bii22}
g_{xyy} = \frac{h_x}{Q} \qquad \mbox{at}\,\, P_0.
\end{equation}
We next differentiate twice the equation \eqref{eqn-g2} in $x$ to
eliminate the fourth order derivatives from $a_{ij}\, Q_{ij}$ and
use \eqref{eqn-bii21}- \eqref{eqn-bii22} to eliminate third order
derivatives and also \eqref{eqn-bii19}-\eqref{eqn-bii20} to
finally conclude, after several calculations, that
$$ a_{ij} \, Q_{ij} = g\, [ (1+h)\, G + C( \te, \rho, \lambda, \|g\|_{C^2_{\partial
\Omega}}, \|h\|_{C^2})$$
 by the previous
Lemmas  and our assumptions. This finishes the proof of the
Lemma.
\end{proof}

We are now going to combine the estimates in Lemmas
\ref{lemma-bii1}-\ref{lemma-bii4} to give the proof of Theorem
\ref{theorem-c-2}.

\medskip

\noindent{\em Proof of Theorem \ref{theorem-c-2}.} We begin by
expressing \eqref{eqn-g2} in the form $$ \det \mathcal M = (\,
g\, g_{\nu\nu} + \te g_\nu^2 \, ) \, g_{\tau\tau} - g\,
g_{\nu\tau}^2=h.$$ Hence,  it is enough to establish the bounds
$$c \leq  g\, g_{\nu\nu}+ \theta \, g_\nu^2 \leq c^{-1} \quad \mbox{and}
\quad c \leq  g_{\tau\tau} \leq c^{-1}$$ for a constant
$c=c(\theta, \rho, \lambda, \|g\|_{C^2_{\partial \Omega}},
\|h\|_{C^2}\, )>0$.

The bounds from  above  readily follow from Lemmas \ref{lemma-bii3} and 
\ref{lemma-bii4} combined with Lemma \ref{lemma-bii2}. The bounds from
below follow  from $(\, g\, g_{\nu\nu} + \te g_\nu^2 \, ) \, g_{\tau\tau} = h+ g\,
 g_{\nu\tau}^2 \geq \lambda >0$ (from our assumption on $h$)
and the corresponding  bounds from above. \qed

We next  re-state  Theorem \ref{theorem-c-2} in terms  of
the solution $f$ of \eqref{eqn-f2}.

\begin{corollary}\label{cor-bii} Assume that $f$ is a non-negative weakly convex classical   solution $f$ of the boundary value problem  \eqref{eqn-f2} in $\Omega$, with  $0 < p < 2$,  which satisfies assumptions
(H-1)--(H-4). Define the matrix
\begin{equation}\label{eqn-matrix2}
\mathcal M= (\mu_{ij})  = \left (
\begin{split}
q^{\frac 13}\, f^{\frac{1-2p}3}\, f_{\nu\nu} & \qquad f^{-\frac p2}  \, f_{\nu\tau}\\
f^{-\frac p2}  \, f_{\nu\tau}  \,\,\,\, & \quad \,\,\, q^{-\frac
13} f^{-\frac{1+p}3}\, f_{\tau\tau}
\end{split}\right )
\end{equation}
with $\nu$, $\tau$ denoting the outer normal and tangent
direction to the level sets of $f$ respectively and $q=3/(2-p)$.
 Then,  there exist a constant $c=c(\, \theta, \rho, \lambda,
 \|f^q\|_{C^2_{\partial \Omega}}, \|h\|_{C^2}\,  )>0$   for which
\begin{equation}\label{eqn-matrix33}
 c\, |\xi|^2  \leq  \mu_{ij} \xi_i \, \xi_j  \leq c^{-1}\, |\xi |^2,
 \qquad \forall \xi \neq 0.
 \end{equation}
\end{corollary}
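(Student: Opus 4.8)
The plan is to deduce Corollary~\ref{cor-bii} directly from Theorem~\ref{theorem-c-2} by translating the statement through the change of variables $g = q^{2/3}\,f^{1/q}$, $q = 3/(2-p)$, given in \eqref{eqn-prii}. First I would record the elementary algebraic relations between the derivatives of $g$ and those of $f$ in the frame adapted to the level sets of $f$ (equivalently, of $g$, since the level sets coincide). Writing $g = q^{2/3} f^{1/q}$, one has $g_\nu = q^{2/3-1}\, f^{1/q-1}\, f_\nu$, and since $q\theta = 1+p$ and $1/q = (2-p)/3$, a direct computation gives
\begin{equation*}
g_\nu = q^{-1/3}\, f^{-(1+p)/3}\, f_\nu, \qquad g_\tau = q^{-1/3}\, f^{-(1+p)/3}\, f_\tau.
\end{equation*}
Differentiating once more (and using $f_\tau = 0$, $g_\tau = 0$ on a level set through the chosen point, exactly as in the proof of Lemma~\ref{lemma-bii4}) yields the key identities
\begin{equation*}
g\,g_{\nu\nu} + \theta\, g_\nu^2 = q^{1/3}\, f^{(1-2p)/3}\, f_{\nu\nu}, \qquad \sqrt{g}\, g_{\nu\tau} = f^{-p/2}\, f_{\nu\tau}, \qquad g_{\tau\tau} = q^{-1/3}\, f^{-(1+p)/3}\, f_{\tau\tau},
\end{equation*}
where the exponents are obtained by collecting powers of $f$: the factor $g\cdot g^{1-2/q}\cdot(\text{stuff})$ reduces using $1 - 2/q = (2p-1)/3$ paired with the extra $f^{1/q} = f^{(2-p)/3}$ coming from the leading $g$, and the $\theta g_\nu^2$ term is precisely what cancels the $f_\nu^2$ contribution in $\partial_{\nu\nu}(f^{1/q})$. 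The upshot is that the matrix $\mathcal M$ in \eqref{eqn-matrix2} is exactly the matrix $\mathcal M$ in \eqref{eqn-matrix20} expressed through $f$.

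Once these identities are in place, the conclusion is immediate: Theorem~\ref{theorem-c-2} — which applies because a classical solution $f$ of \eqref{eqn-f2} has, by definition, pressure $g \in C^{0,1}(\Omega)\cap C^2_s(\overline{\Omega(g)})$ solving \eqref{eqn-g2}, and $f$ satisfies (H-1)--(H-4) iff $g$ does — gives a constant $c>0$ depending on $\|g\|_{C^2_{\partial\Omega}}$, $\|h\|_{C^2}$, $\theta$, $\lambda$, $\rho$ with $c|\xi|^2 \le \mu_{ij}\xi_i\xi_j \le c^{-1}|\xi|^2$ on $\overline{\Omega(g)} = \overline{\Omega(f)}$. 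Since $g = q^{2/3}f^{1/q}$ on $\partial\Omega$, the quantity $\|g\|_{C^2_{\partial\Omega}}$ is controlled by $\|f^q\|_{C^2_{\partial\Omega}}$ (note $f^q = q^{-2}\,g^q$, so boundary $C^2$ control of $f^q$, together with $\vp > 0$ bounded below on $\partial\Omega$, controls the boundary $C^2$ norm of $g$ and vice versa); restating the dependence gives the stated constant $c = c(\theta,\rho,\lambda,\|f^q\|_{C^2_{\partial\Omega}},\|h\|_{C^2})$. This yields \eqref{eqn-matrix33}.

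The only genuine point requiring care — the main obstacle — is the bookkeeping of fractional exponents of $f$ in the second-derivative identities and the verification that the algebraic cross-terms conspire correctly, in particular that the $\theta g_\nu^2$ term in $g\,g_{\nu\nu} + \theta g_\nu^2$ is exactly the term needed to absorb the $f_\nu^2$-terms produced by differentiating a power of $f$ twice; this is where the specific value $\theta = (1+p)/(2-p)$, i.e. $q\theta = 1+p$, is used. I would also need to note that the off-diagonal identity $\sqrt{g}\,g_{\nu\tau} = f^{-p/2}f_{\nu\tau}$ requires evaluating at a point where $f_\tau = g_\tau = 0$ so that the lower-order cross terms drop — this is harmless since the matrix inequality \eqref{eqn-matrix33} is a pointwise statement and at each point one may choose the $(\nu,\tau)$-frame adapted to the level set through that point. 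Everything else is a direct substitution into the already-established Theorem~\ref{theorem-c-2}, so no new analytic input is needed.
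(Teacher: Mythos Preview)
Your proposal is correct and is exactly the approach implicit in the paper, which offers no proof beyond calling the corollary a restatement of Theorem~\ref{theorem-c-2}: the matrices \eqref{eqn-matrix20} and \eqref{eqn-matrix2} coincide entry by entry under $g=q^{2/3}f^{1/q}$, and your exponent bookkeeping (using $q-\theta=1$) is right. One minor slip: from $g=q^{2/3}f^{1/q}$ one gets $g^q=q^{2q/3}f$, not $f^q=q^{-2}g^q$; this does not affect the argument, since on $\partial\Omega$ the function $f=\vp>0$ is bounded away from zero and the $C^2$ boundary norms of $g$, $f$, and any fixed power of $f$ control one another.
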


We will finish this section with the following lower bound on $\sqrt{g}\, \det D^2 g$,
which will be used in the next section. 

\begin{prop}\label{proposition-detg}
Under the assumptions of Theorem \ref{theorem-c-2}, there exists a constant
$C=C(\|g\|_{C^2_{\partial \Omega}}, \|h\|_{C^2}, \theta, \lambda, \rho\, )>0$,   for which  the  bound
\begin{equation}\label{eqn-detg} \sqrt{g} \,  \det D^2 g  \geq  - C
\end{equation}
holds on $\Omega(g)$.
\end{prop}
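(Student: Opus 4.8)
The plan is to estimate $\det D^2 g = g_{xx}\,g_{yy} - g_{xy}^2$ from below by combining the equation \eqref{eqn-g2} with the already-established sharp bounds in Theorem \ref{theorem-c-2}. Rewriting \eqref{eqn-g2} in the rotated frame where $\nu$, $\tau$ are the outward normal and tangential directions to the level sets of $g$, and using the matrix $\mathcal M$ from \eqref{eqn-matrix20}, one has
\[
g\,\det D^2 g = \det \mathcal M - \theta\,g_\nu^2\,g_{\tau\tau} = h - g\,g_{\nu\tau}^2 - \theta\,g_\nu^2\,g_{\tau\tau}.
\]
Thus $g\,\det D^2 g$ is bounded (both above and below) by Theorem \ref{theorem-c-2}, since $|Dg|$, $g_{\tau\tau}$ are bounded above and $g\,g_{\nu\tau}^2 \le c^{-1}\,g_{\nu\nu}\,g \cdot \text{(bounded)}$ is controlled by $\det\mathcal M$. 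The issue is the factor $\sqrt g$ versus $g$: away from the free boundary, where $g \ge \delta > 0$, the bound $\sqrt g\,\det D^2 g \ge -C$ is immediate from $g\,\det D^2 g$ being bounded; the real work is in a neighborhood of $\Gamma(g)$ where $g \to 0$, so that dividing the relation above by $\sqrt g$ produces a factor $1/\sqrt g$ multiplying $(h - g\,g_{\nu\tau}^2 - \theta\,g_\nu^2\,g_{\tau\tau})$, which blows up unless the bracket itself is $O(\sqrt g)$.

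So the heart of the matter is to show that, near $\Gamma(g)$,
\[
h - \theta\,g_\nu^2\,g_{\tau\tau} - g\,g_{\nu\tau}^2 = O(\sqrt g) \qquad \text{or, more usefully, is} \ \ge -C\sqrt g.
\]
On $\Gamma(g)$ itself this quantity vanishes: there $g = 0$ and $\theta\,g_\nu^2\,g_{\tau\tau} = \theta\,g_\nu^3\kappa = h$, by the classical-solution boundary condition \eqref{eqn-c-g}. Hence the bracket vanishes on $\{g = 0\}$ and the desired bound should follow from a first-order Taylor expansion in the normal direction, provided one controls the normal derivative of the bracket by the derivative bounds already available. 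Concretely, I would parametrize a tubular neighborhood of $\Gamma(g)$ by $(s,\sigma)$ where $\sigma$ is arclength along level sets and $s$ measures (singular) distance to the free boundary; since $g \sim g_\nu \cdot \text{dist}$ and $g_\nu$ is bounded away from $0$ and $\infty$ (Lemmas \ref{lemma-bii1}–\ref{lemma-bii2}), $\sqrt g$ is comparable to $\sqrt{\text{dist}}$. The key point is that $g\,g_{\nu\tau}^2$ is already $O(\sqrt g)\cdot\sqrt g = O(g)$ in a classical solution (this is precisely why $\sqrt g\,g_{\nu\tau}$, not $g_{\nu\tau}$, is the natural quantity in $C^2_s$), so it contributes harmlessly; and $h - \theta g_\nu^2 g_{\tau\tau}$ is a $C^1$ function up to $\Gamma(g)$ vanishing on $\Gamma(g)$, hence $O(\text{dist}) = O(g)$, which is $\le C\sqrt g$ since $g$ is bounded.

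The main obstacle, and the step requiring care, is justifying that $h - \theta\,g_\nu^2\,g_{\tau\tau}$ is genuinely $O(g)$ near the free boundary with a constant depending only on the data in Theorem \ref{theorem-c-2}. This needs the $C^{2,\alpha}_s$-type control — in particular that $g\,g_{\nu\nu}$, $\sqrt g\,g_{\nu\tau}$, $g_{\tau\tau}$ extend continuously (and with bounded derivatives in the singular sense) up to $\Gamma(g)$ — which is exactly what Theorem \ref{theorem-c-2} and the assumed $C^4$ regularity (H-4) provide. A clean way to organize this is: first handle $\{g \ge \delta\}$ trivially; then, on $\{g < \delta\}$, write $\sqrt g\,\det D^2 g = (h - \theta g_\nu^2 g_{\tau\tau})/\sqrt g - \sqrt g\,g_{\nu\tau}^2$, bound the second term by $-c^{-1}\sqrt g$ using \eqref{eqn-grad0}–\eqref{eqn-matrix30}, and bound the first term below by estimating $|h - \theta g_\nu^2 g_{\tau\tau}|$ along normal rays using the fundamental theorem of calculus together with the bound on $\partial_\nu(\theta g_\nu^2 g_{\tau\tau})$ — the latter being where one invokes $g \in C^4(\overline{\Omega(g)})$ and the established bounds to keep the constant uniform. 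Choosing $\delta$ depending only on the data then closes the estimate.
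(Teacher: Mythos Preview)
Your approach is genuinely different from the paper's, and it has a real gap in the dependence of the constant.

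You reduce the problem to showing that $h-\theta\,g_\nu^2\,g_{\tau\tau}$ (which vanishes on $\Gamma(g)$) is $O(g)$ near the interface, via the fundamental theorem of calculus along normal rays. That step requires a uniform bound on the normal derivative $\partial_\nu(\theta\,g_\nu^2\,g_{\tau\tau})$, which involves $g_{\nu\nu}$ (not $g\,g_{\nu\nu}$) and the third derivative $g_{\nu\tau\tau}$. Neither of these is controlled by Theorem~\ref{theorem-c-2}: that theorem bounds only $g\,g_{\nu\nu}$, $\sqrt g\,g_{\nu\tau}$, $g_{\tau\tau}$, not their derivatives. If you appeal to the qualitative assumption (H-4) that $g\in C^4(\overline{\Omega(g)})$, you do get finiteness, but your constant then depends on $\|g\|_{C^3}$ or $\|g\|_{C^4}$, not on the data $\|g\|_{C^2_{\partial\Omega}},\|h\|_{C^2},\theta,\lambda,\rho$ alone. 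And you cannot invoke the $C^{2,\alpha}_s$ estimate (Theorem~\ref{theorem-c-2a}) here, since this very proposition is used in its proof---specifically, in Lemma~\ref{lemma-boundsA}, to obtain the lower bound $b\ge c>0$ on the first-order coefficient of the linearization.

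The paper avoids this by a Pogorelov-type maximum principle: it sets $Z=(x^2+y^2)\,\sqrt g\,\det D^2 g$ and, at an interior minimum $P_0$, computes $a_{ij}Z_{ij}$ with $A=(a_{ij})$ as in \eqref{eqn-matrix}. Differentiating the equation \eqref{eqn-g2} once and twice allows one to eliminate all third- and fourth-order derivatives, so that $a_{ij}Z_{ij}$ is expressed purely in terms of $Z$ and quantities already controlled by Theorem~\ref{theorem-c-2}. The conclusion is that $a_{ij}Z_{ij}<0$ when $Z$ is large negative and $P_0$ is close to $\Gamma(g)$, contradicting the minimality. This is why the constant comes out depending only on the second-order a priori bounds, even though the computation itself uses the $C^4$ smoothness.

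(Incidentally, your identity should read $g\,\det D^2 g = \det\mathcal M - \theta\,g_\nu^2\,g_{\tau\tau} = h - \theta\,g_\nu^2\,g_{\tau\tau}$; the extra $-g\,g_{\nu\tau}^2$ you wrote is already absorbed into $\det\mathcal M=h$.)
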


\begin{proof}
Set $Z := (x^2+y^2) \, \sqrt{g} \, \det D^2 g$.   We will use the maximum principle to establish the bound $Z \geq -C$, which readily implies \eqref{eqn-detg}, since  $x^2+y^2  \geq \rho^2$ on $\Omega(g)$.

Clearly, $Z \geq - C$ on $\partial \Omega(g)$. Assume  that the maximum of $Z$ is attained at an interior point $P_0 \in \Omega(g)$.
Since $Z$ is rotationally invariant, we may assume, without loss of generality,  that
\begin{equation}\label{eqn-detg1}
g_x >0,  \quad g_y=0  \quad{and} \quad  g_{xy} =0, \quad \mbox{at} \,\,  P_0.
\end{equation}
Differentiating equation once and twice in $x,y$ and using that $Z_x=Z_y=0$ at $P_0$,  we find,  after several direct calculations,  that
at the minimum point $P_0$ where \eqref{eqn-detg2} holds, we have 
\begin{equation}\label{eqn-detg2}
0 \leq  a_{ij} \, Z_{ij}  = \frac 1{4\, r^4\, g\,g_1^2\,g_{22}} \, \sum_{i=1}^3 A_i \, Z^i , \qquad \mbox{at} \,\, P_0
\end{equation}
with $A=(a_{ij})$ given by \eqref{eqn-matrix}, and
$$A_1 =  13 \, (x^2+y^2)^2 \, g_1^4 \, g_{22} + C\, \sqrt{g}$$
and
$$A_2 =
- \sqrt{g} \, (117 \, (x^2+y^2)^{3/2}\,  g_1^2\, g_{22} +  C\, g)  \quad \mbox{and} \quad A_3= -  \frac{4g^2 x_2^2}{(x^2+y^2)^2 g_{22}}.$$
The constants $C=C(g_1,g_{22},x,y)$ depend only on $g_1$, $g_{22}$, $x,y$ and hence
they are bounded, by Theorem \ref{theorem-c-2}.

We will  show that $a_{ij} \, Z_{ij} < 0$ at $P_0$ provided that $Z <0$ is sufficiently
large in absolute value and $P_0$ is sufficiently close to the free-boundary $\Gamma(g)$, establishing a contradiction to $a_{ij} \, Z_{ij} \geq 0$ at the minimum point $P_0$
of $Z$. 

It is clear from the estimates in Theorem \ref{theorem-c-2} that $A_1 \, Z < 0$ and $A_2 \, Z^2 <0$, provided $P_0$ is sufficiently close to the free-boundary $\partial \Omega$, i.e. $g$ is sufficiently close to zero. The term $A_3\, Z^3$ is nonnegative, however
we observe that
$$A_3 \, Z^3 = -    \frac{4 x_2^2\,  g \,  (\sqrt g \, Z)^2}{(x^2+y^2)^2 g_{22}} \, Z = C g  Z $$
with $C$ bounded, since $\sqrt g \, Z$ is bounded by the estimates in Theorem \ref{theorem-c-2}. Hence, $\sum_{i=1}^3 A_i \, Z^i \leq A_1+A_3 <0$ at $P_0$, provided that $Z <0$ is sufficiently large and $P_0$ is sufficiently close to the free-boundary $\Gamma(g)$,
which concludes the proof of the Proposition.
\end{proof}

\section{$C^{2,\alpha}_s$-Regularity}
\label{sec-c2a}

We will assume throughout this section that  $g \in C^4(\overline{\Omega(g)})$ is a classical
solution of the boundary value problem \eqref{eqn-g2} in $\Omega$,
with  $0 < p < 2$ and  $h \in C^2(\Omega)$ satisfying  \eqref{eqn-lambda}. 
In  addition, we assume  that $g$ satisfies the assumptions (H-1)--(H-4).
Our goal is to establish a uniform estimate on the norm $\| g \|_{C^{2,\alpha}_s (\overline{\Omega(g)})}$, as defined in section 2.2,  by combining the a-priori estimates in Theorem \ref{theorem-c-2} with  the H\"older Regularity result Theorem [DL]. 
We will obtain estimates which  depend only
on the data  $\|g\|_{C^2_{\partial \Omega}}, \|h\|_{C^2}, \theta, \lambda, \rho$.

Since the regularity theorem [DL]  concerns with solutions on a
fixed domain,  we will first  perform  a  change of coordinates,
near the interface, which transforms the free-boundary problem
\eqref{eqn-g2} to a nonlinear degenerate  problem with fixed-boundary. The
same  coordinate change was used in \cite{DH2}. We refer the
reader to that paper for the detailed computations.

Let
$P_0=(x_0,y_0) \in \Gamma(g)$ be  a free-boundary  point. We may  assume,
by rotating the coordinates, that at the point $P_0$, \begin{equation}\label{nvec2}
n_0:=   \frac{\bf P_0}{|{\bf P_0}|}  = {\bf e_1}.
\end{equation}
Then, by Theorem \ref{theorem-c-2},  $g_x(P) >0$, for all points $P=(x,y)$  sufficiently close to $P_0$.
Hence, we can solve  around the point $P_0$,  the equation
$z=g(x,y)$ with respect to $x$, yielding to a map
$$x=q(z,y)$$
defined for all $(z,y)$ sufficiently close to $Q_0=(0,y_0)$.
Using  the identities
$$ g_x  =  \frac 1{q_z}, \quad g_y = - \frac{q_y}{q_z},  \quad  g_{xx} = - \frac 1{q_z^3}\,  q_{zz} $$
and
$$ g_{xy}= -  \frac 1{q_z}\,
\left ( -\frac {q_y}{ q_z^2}\,  q_{zz} +  \frac 1{q_z}q_{zy} \right ),
\quad
g_{yy} =  - \frac 1{q_z}\,
\left ( \frac {q_y^2}{q_z^2} \,q_{zz} -
2 \frac {q_y}{ q_z}\,  q_{zy} +  q_{yy} \right )$$
which yield to
$$g_{xx} g_{yy} - g_{xy}^2 = \frac 1{q_z^4}
\left ( q_{zz} q_{yy} - q_{zy}^2 \right ) $$
and
$$ g_y^2g_{xx} - 2g_xg_yg_{xy}+g_x^2g_{yy}
= -\frac 1{q_z^3} \, q_{yy}$$
we find that
$q$ satisfies the equation
\begin{equation}\label{eqn-q}
\frac { - z\, {\det} D^2  q  +  \theta \, q_z \, q_{yy} }
{q_z^4} = - H
\end{equation}
with
\begin{equation}\label{eqn-H}
H(z,y)=h(x,y), \qquad x=q(z,y).
\end{equation}
In addition, $q$ is a concave  function, since $g$ is convex.

Consider the non-linear operator
$$Lq := \frac {- z\, {\det} D^2  q  +  \theta \, q_z \, q_{yy} }
{q_z^4}.$$
The linearization $\tilde L$ of $L$  around a  point  $q$ has the form
\begin{equation}\label{eqn-tilq}
\begin{split}
L_q(\tilde q) = &\frac {- z \, q_{yy} \tilde q_{zz} +  2 z\,  q_{zy} \, \tilde
q_{zy} + (  \theta \, q_z  - z\, q_{zz} )\, \tilde q_{yy}}
{q_z^4}  \\ &\qquad \qquad \qquad \qquad + \frac {4\, z\, {\det} D^2  q  - 3 \,  \theta \, q_z \, q_{yy} }
{q_z^5}  \, \tilde q_z .
\end{split}
\end{equation}
Let us denote by  $\mathcal B_\eta$ the box
\begin{equation}\label{eqn-box}
\mathcal B_\eta =
\{ \, 0\leq z \leq \eta^2
, \,\, |y-y_0| \leq \eta \,\, \}
\end{equation}
around  $Q_0=(0,y_0)$ and by  $C^\alpha_s({\mathcal B}_\eta)$, $C^{2,\alpha}_s({\mathcal B}_\eta)$ the spaces defined in section 2.2. 
Our goal in this section is to establish the following result:

\begin{thm}\label{theorem-c2a}
Assume that  $g \in C^4(\overline{\Omega(g)})$ is a non-negative classical  solution  of the boundary value problem \eqref{eqn-g2} on $\Omega$, with  $0 < p < 2$  and $h \in C^2(\Omega)$  satisfying
condition \eqref{eqn-lambda}. In addition,  assume that $g$  satisfies the assumptions 
(H-1)--(H-4). 
Then, there exist  constants $0 < \alpha < 1$,
$C<\infty$ and $\eta >0$, depending only on the data $\|g\|_{C^2_{\partial \Omega}}, \|h\|_{C^2}, \theta, \lambda, \rho$,  such that  for any free-boundary point
$P_0=(x_0,y_0)$, satisfying condition
\eqref{nvec2},  the function $x=q(z,y)$ satisfies
the estimate
$$\|q\|_{C^{2+\alpha}_s(\B_{\eta})} \leq C$$
on  $\B_\eta= \{ \, 0\leq z \leq \eta^2
, \,\, |y-y_0| \leq \eta \,\, \}$.
\end{thm}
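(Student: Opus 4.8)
The plan is to obtain the estimate $\|q\|_{C^{2+\alpha}_s(\B_\eta)} \le C$ as a consequence of the a'priori bounds of Theorem \ref{theorem-c-2} together with the H\"older regularity Theorem [DL], applied to the equation \eqref{eqn-q} for $q=q(z,y)$. The argument splits naturally into three stages: (1) transfer the $C^2_s$ bounds on $g$ from Theorem \ref{theorem-c-2} into bounds on the transformed function $q$, i.e. uniform bounds (in the singular scaling) on $q, q_z, q_y, z\,q_{zz}, \sqrt{z}\,q_{zy}, q_{yy}$ and positive ellipticity of the coefficient matrix of the linearization $L_q$; (2) differentiate the equation \eqref{eqn-q} in the $y$-direction to produce a linear degenerate equation of the form \eqref{eqn-linear} for the derivative $q_y$, whose coefficients are bounded measurable and whose first-order coefficient $b$ satisfies $b\ge \nu>0$; and (3) invoke Theorem [DL] to upgrade $q_y$ (and similarly the scaled second derivatives) to $C^\alpha_s$, then bootstrap to the full $C^{2+\alpha}_s$ bound.

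\textbf{Step 1: From $g$ to $q$.} Using the identities relating the derivatives of $g$ to those of $q$ (displayed before \eqref{eqn-q}), and the bounds $c \le g_x = 1/q_z \le c^{-1}$ from \eqref{eqn-grad0}, together with the matrix bound \eqref{eqn-matrix30}, I would check that on the box $\B_\eta$ the quantities $q_z, q_y$ are bounded with $q_z$ bounded away from $0$, and that $z\,q_{zz}$, $\sqrt{z}\,q_{zy}$, $q_{yy}$ are bounded. Concretely, $z = g$ near the free boundary, $g\,g_{\nu\nu}$ corresponds to $z\,q_{zz}/q_z^{3}$ up to lower-order terms, $\sqrt{g}\,g_{\nu\tau}$ corresponds to $\sqrt{z}\,q_{zy}$, and $g_{\tau\tau}$ corresponds to $q_{yy}$; so \eqref{eqn-matrix30} gives exactly the $C^0$-part of the $C^{2+\alpha}_s$-norm and the strict positivity of $(\alpha_{ij})$ in \eqref{eqn-linear}. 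I would also record here that Proposition \ref{proposition-detg} gives the lower bound $\sqrt{g}\,\det D^2 g \ge -C$, which translates into a one-sided bound on $z\,q_{zz}$ that, combined with the equation \eqref{eqn-q} rewritten as $\theta\,q_z\,q_{yy} = z\,\det D^2 q - q_z^4 H$, yields the sign condition $b = (4z\det D^2 q - 3\theta q_z q_{yy})/q_z^5 \ge \nu > 0$ needed for Theorem [DL].

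\textbf{Step 2 and 3: the linear equation and bootstrapping.} Differentiating \eqref{eqn-q} with respect to $y$, the function $w := q_y$ solves $L_q(w) = -H_y - (\partial_x H)\,q_y$ (schematically), where $L_q$ is the linearized operator \eqref{eqn-tilq}; by Step 1 this is of the form \eqref{eqn-linear} with $(\alpha_{ij})$ strictly positive, coefficients bounded measurable, $b\ge\nu>0$, and right-hand side in $L^2(d\mu)$ with $d\mu = z^{\nu/2-1}\,dz\,dy$ (the weight is integrable near $z=0$, and the right side is bounded, so the weighted $L^2$ norm is finite). Theorem [DL] then gives $q_y \in C^\alpha_s(\B_{r})$ for $r<\eta/2$ with the stated bound. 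The same scheme applied to the scaled second derivatives --- one differentiates the equation twice, or works with $z\,q_{zz}$ and $\sqrt{z}\,q_{zy}$ directly --- yields $z\,q_{zz}, \sqrt z\,q_{zy}, q_{yy} \in C^\alpha_s$, and then $q_z \in C^\alpha_s$ follows by integrating $q_{zz}$ against the singular metric. Assembling these gives $\|q\|_{C^{2+\alpha}_s(\B_\eta)}\le C$ after relabelling $\eta$.

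\textbf{Main obstacle.} The delicate point is Step 2: verifying that after differentiating \eqref{eqn-q} the resulting equation for $q_y$ (and for the scaled second-order quantities) genuinely has the degenerate structure \eqref{eqn-linear} with the \emph{sign condition} $b\ge\nu>0$ uniformly up to $z=0$, since this is precisely what makes Theorem [DL] applicable and is where the degeneracy is of the ``right'' type. This is where Proposition \ref{proposition-detg} is essential --- without the lower bound on $\sqrt g\,\det D^2 g$ one cannot control the sign of the drift coefficient near the free boundary --- and some care is needed because differentiating twice produces third-order terms that must be re-expressed, via the equation itself, in terms of the controlled quantities; the bookkeeping (already carried out in the analogous setting of \cite{DH2}) is the technical heart of the argument, and I would handle it exactly as there, citing the detailed computations rather than reproducing them.
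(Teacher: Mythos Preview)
Your Step 1 and the treatment of $q_y$ in Step 2 are correct and match the paper exactly: Lemma \ref{lemma-boundsA} is precisely the transfer of the bounds of Theorem \ref{theorem-c-2} and Proposition \ref{proposition-detg} to the coefficients $(\alpha_{ij})$ and $b$, and differentiating \eqref{eqn-q} in $y$ gives a linear equation of the form \eqref{eqn-linear} to which Theorem [DL] applies directly, yielding $q_y\in C^\alpha_s$.

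There is, however, a genuine gap in your plan for $q_z$ and for the second-order quantities. You propose to obtain $z\,q_{zz},\sqrt z\,q_{zy},q_{yy}\in C^\alpha_s$ first (``differentiating twice or working with $z\,q_{zz}$ directly'') and then deduce $q_z\in C^\alpha_s$ by integration. This does not work, and the paper proceeds in the opposite order. The obstruction is that differentiating \eqref{eqn-q} in $z$ produces, besides the linearized operator $L_q$, the extra zero-order term $q_{zy}^2/q_z^5$ (see \eqref{eqn-tilqz}); the a'priori bounds \eqref{eqn-matrix5} control only $\sqrt z\,q_{zy}$, so this term is \emph{not} bounded and Theorem [DL] cannot be applied to $\tilde q=q_z$ as it was to $q_y$. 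The paper exploits instead the \emph{sign} of $q_{zy}^2/q_z^5\ge 0$: this makes $q_z$ a super-solution of $L_1\tilde q\le \tilde H_1$ (with the modified drift $b_1$ of \eqref{eqn-b1}), and then suitable powers $(q_z-m)^\beta$, $q_z^\beta$, $M^\beta-q_z^\beta$ are shown to be sub- or super-solutions; the H\"older bound for $q_z$ is obtained from the Harnack inequality and local maximum principle of \cite{DL2} (Theorems 3.6--3.7 there), as in Lemma 5.9 of \cite{DL1}. This is the technical heart of the argument and is not ``bookkeeping''.

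Once $q_z,q_y\in C^\alpha_s$ are in hand, the paper obtains $z\,q_{zz}$ and $\sqrt z\,q_{zy}$ not by differentiating twice but by an interior scaling argument (Lemma \ref{lemma-infty}): the rescaled functions $q^r(z,y)=r^{-2}q(r^2+r^2z,\,y_r+ry)$ satisfy a uniformly elliptic fully nonlinear equation on fixed disks, so classical theory gives uniform $C^\infty$ bounds there; these are then patched with the $C^\alpha_s$ regularity of $q_z,q_y$ to conclude $z\,q_{zz},\sqrt z\,q_{zy}\in C^\alpha_s$ (Lemma \ref{lemma-qzz}), and finally $q_{yy}\in C^\alpha_s$ is read off from the equation (Lemma \ref{lemma-qyy}). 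Your suggestion to differentiate the equation twice would introduce further uncontrolled terms, and your integration argument for $q_z$ from $z\,q_{zz}$ fails to control the variation of $q_z$ in the $y$-direction uniformly up to $z=0$.
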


Consider the matrix
\begin{equation}\label{eqn-matrix50}
\mathcal A= (\alpha_{ij}) := q_z^{-4} \left (
\begin{split}
- q_{yy}  & \quad \sqrt z \, q_{zy}\\
\sqrt z \, q_{zy} & \quad   \theta \, q_z  -z\, q_{zz}
\end{split}\right )
\end{equation}
and the coefficient
\begin{equation}\label{coeff}
b :=  \frac {4 \, z\, {\det} D^2  q  -  3\,   \theta \, q_z \, q_{yy} }
{q_z^5}.
\end{equation}

A direct consequence of Theorem \ref{theorem-c-2} is the following
a-priori bounds on $\mathcal A$ and $b$.

\begin{lem}
\label{lemma-boundsA} Under the assumptions of Theorem
\ref{theorem-c2a},  there exist a positive constants  $c=c(\|g\|_{C^2_{\partial
D}}, \|h\|_{C^2}, \theta, \lambda, \rho\, )$ and $\eta_0$,   for
which  the  bounds
\begin{equation}\label{eqn-matrix5}
0 <  c\, |\xi|^2  \leq  \al_{ij} \, \xi_i \, \xi_j  \leq c^{-1}\, |\xi |^2, \qquad \forall \xi \neq 0
 \end{equation}
 and
\begin{equation}\label{eqn-coeffb} 0 <  c \leq  b  \leq c^{-1}
\end{equation}
hold   on the box ${\mathcal B}_\eta$, provided $\eta \leq \eta_0$.
\end{lem}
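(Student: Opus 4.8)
The plan is to translate the a priori bounds of Theorem~\ref{theorem-c-2} from the original $(x,y)$-coordinates into the new $(z,y)$-coordinates through the hodograph-type change of variables $x=q(z,y)$, $z=g(x,y)$, and then read off positivity and two-sided bounds for the entries of $\mathcal A$ and for $b$. First I would record the dictionary between the derivatives of $g$ and those of $q$, which is already displayed just before \eqref{eqn-q}: namely $g_x=1/q_z$, $g_y=-q_y/q_z$, $g_{xx}=-q_{zz}/q_z^3$, and the analogous formulas for $g_{xy}$, $g_{yy}$. Since the free-boundary $\Gamma(g)=\{z=0\}$ corresponds to $z=0$ in the box $\mathcal B_\eta$, and $\nu$ is (after the rotation \eqref{nvec2}) the $x$-direction at $P_0$, the level-set derivatives at $z=0$ satisfy $g_\nu=g_x=1/q_z$, $g_{\tau\tau}=g_{yy}$, and $g_{\nu\tau}=g_{xy}$, up to lower-order curvature terms that are uniformly controlled.

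Next I would compute $\alpha_{ij}$ and $b$ explicitly in terms of $g$. A short calculation using the dictionary shows that the matrix $\mathcal A$ in \eqref{eqn-matrix50} is, up to a positive bounded conjugating factor, the matrix $\mathcal M$ of \eqref{eqn-matrix20} expressed in $(z,y)$ variables; concretely one checks that the entry $-q_{yy}/q_z^4$ is comparable to $g\,g_{\nu\nu}+\theta g_\nu^2$, that $\sqrt z\, q_{zy}/q_z^4$ is comparable to $\sqrt g\, g_{\nu\tau}$, and that $(\theta q_z - z q_{zz})/q_z^4$ is comparable to $g_{\tau\tau}$. Hence \eqref{eqn-matrix5} follows directly from the two-sided bound \eqref{eqn-matrix30} on $\mathcal M$ together with the bound \eqref{eqn-grad0} on $|Dg|$ (which controls $q_z=1/g_x$ from above and below). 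For the coefficient $b$ in \eqref{coeff}, I would rewrite $z\,\det D^2 q = -q_z^4(g_{xx}g_{yy}-g_{xy}^2)$ and $\theta q_z q_{yy}= -\theta q_z^4\,(g_y^2 g_{xx}-2g_xg_yg_{xy}+g_x^2 g_{yy})$, so that the numerator of $b$ becomes $q_z^4$ times an expression in the second derivatives of $g$; using \eqref{eqn-g2} to replace $g\det D^2 g$, one sees after simplification that $b$ equals a fixed positive multiple of $\theta\, g_\nu\,g_{\tau\tau}/\sqrt g$ plus bounded corrections, and near $z=0$ the equation on $\Gamma(g)$ gives $\theta g_\nu^2 g_{\tau\tau}=h>0$, whence $b\ge c>0$ for $\eta$ small, while the upper bound $b\le c^{-1}$ is immediate from Theorem~\ref{theorem-c-2}. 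The smallness of $\eta$ (i.e. $\eta\le\eta_0$) is used precisely to ensure that on $\mathcal B_\eta$ we stay in the regime where $g$ is small, $g_x>0$, and the solve-for-$x$ step is valid with uniform constants.

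The main obstacle I anticipate is bookkeeping rather than conceptual: the quantities $\alpha_{ij}$ and $b$ involve $z=g$ and $\sqrt z=\sqrt g$ weights, and one must verify that the degenerate factors match up correctly so that, e.g., $-q_{yy}/q_z^4$ does not degenerate or blow up as $z\to 0$. This is exactly the point of the singular metric \eqref{eqn-metric}: the entry $\alpha_{11}$ carries a hidden factor of $z$ through $q_{yy}$ (which behaves like $z\,g_{\nu\nu}$ plus bounded terms), $\alpha_{12}$ carries a factor $\sqrt z$, and $\alpha_{22}$ is the nondegenerate entry; one must check these weights cancel against the explicit $z$, $\sqrt z$ appearing in \eqref{eqn-q10}, \eqref{eqn-linear}, \eqref{eqn-matrix50}. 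Once the matching of weights is confirmed, \eqref{eqn-matrix5} and \eqref{eqn-coeffb} are direct corollaries of Theorem~\ref{theorem-c-2}, and no further estimate is needed. I would therefore present the proof as: (i) recall the derivative dictionary; (ii) express $\alpha_{ij}$, $b$ in terms of $g$-derivatives and the metric weights; (iii) invoke \eqref{eqn-grad0} and \eqref{eqn-matrix30} for the ellipticity of $\mathcal A$ and the upper bound on $b$; (iv) use the boundary relation $\theta g_\nu^2 g_{\tau\tau}=h\ge\lambda$ on $\Gamma(g)$ plus continuity to get $b\ge c>0$ on $\mathcal B_\eta$ for $\eta\le\eta_0$.
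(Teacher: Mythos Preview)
Your approach for the ellipticity of $\mathcal A$ is essentially sound, though the bookkeeping is off: a direct computation using the dictionary gives $\alpha_{11}=-q_{yy}/q_z^4=g_x\,(g_y^2g_{xx}-2g_xg_yg_{xy}+g_x^2g_{yy})$, which is comparable to $g_{\tau\tau}$, while $\alpha_{22}=(\theta q_z-zq_{zz})/q_z^4=g_x(gg_{xx}+\theta g_x^2)$ is the entry comparable to $gg_{\nu\nu}+\theta g_\nu^2$; you have these swapped. The paper avoids entry-by-entry matching by observing that $\det\mathcal A=h$ is literally the equation \eqref{eqn-q}, and then bounding $\mathrm{tr}\,\mathcal A$ via \eqref{eqn-trace} and Theorem~\ref{theorem-c-2}; this is cleaner but your route (once corrected) would also work.

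The genuine gap is in the lower bound for $b$. Your claimed form ``a fixed positive multiple of $\theta g_\nu g_{\tau\tau}/\sqrt g$ plus bounded corrections'' cannot be right, since that quantity blows up as $g\to 0$ while $b$ must stay bounded. Using the equation \eqref{eqn-q} to eliminate $\theta q_z q_{yy}$ one finds the correct identity
\[
b \;=\; g_x\bigl(3h + g\,\det D^2 g\bigr).
\]
At $z=0$ this equals $3h\,g_x\ge 3\lambda c$, but your continuity argument does \emph{not} yield an $\eta_0$ depending only on the data: Theorem~\ref{theorem-c-2} gives only $|g\det D^2 g|\le C$ with a constant that may well exceed $3\lambda$, so you cannot conclude $3h+g\det D^2 g>0$ on $\mathcal B_\eta$ from those bounds alone. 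The paper fills exactly this gap with Proposition~\ref{proposition-detg}, a separate maximum-principle estimate establishing $\sqrt g\,\det D^2 g\ge -C$ uniformly; this gives $g\det D^2 g\ge -C\sqrt z$, hence $b\ge g_x(3\lambda-C\sqrt z)\ge c>0$ once $\eta\le\eta_0$ with $\eta_0$ quantitatively determined by the data. You should invoke Proposition~\ref{proposition-detg} (or reprove it) rather than appeal to continuity from $\Gamma(g)$.
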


\begin{proof}
By direct calculation
\begin{equation}\label{det}
\det \mathcal A = \frac {z\, {\det} D^2  q  -  \theta \, q_z \, q_{yy} }
{q_z^4} = h
\end{equation}
and
\begin{equation}\label{eqn-trace}
\text {tr} \, \mathcal A = \frac {1}{g_x^3} \, \left [
 (g_y^2 \, g_{xx} - 2 g_x g_y g_{xy} + g_x^2 \, g_{yy}) + (g g_{xx} + \theta \, g_x^2)
\right ].
\end{equation}
By \eqref{eqn-lambda}, $\lambda < \det \mathcal A < \lambda^{-1}$. The bound
$ c < \text {tr} \, \mathcal A < c^{-1}$  follows from Theorem \ref{theorem-c-2} and \eqref{eqn-trace}. These two bounds yield to \eqref{eqn-matrix5}.

Next,  we observe that
$$b =   \frac {4 z\, {\det} D^2  q  -  3\, \theta \, q_z \, q_{yy} }
{q_z^5} = g_x\, ( 3 \, h + g \, \det D^2 g).$$
Theorem   \ref{theorem-c-2} shows   that  $b \leq c^{-1}$ on ${\mathcal B}_\eta$. The bound from below $b \geq c >0$ on ${\mathcal B}_\eta$, with $\eta$ sufficiently small,  readily  follows from   \eqref{eqn-lambda} and \eqref{eqn-detg}. \end{proof}

We are now in position to show the uniform H\"older  bounds of the
first order derivatives $h_y$ and $h_z$ of $h$ on $\mathcal B_\eta$.

\begin{lem}\label{lemma-qzqy}
Under the assumptions of Theorem \ref{theorem-c2a}, there exists a number
$\alpha \in (0,1)$, and  positive constants $\eta$ and
$C$, depending only on the data  $\|g\|_{C^2_{\partial \Omega}},
\|h\|_{C^2}, \theta, \lambda, \rho\, $, such that
$$
\| q_z\|_{C^\alpha_s(\mathcal B_{\frac \eta2})} \leq C \qquad
\mbox{and}
\qquad
\| q_y\|_{C^\alpha_s(\mathcal B_{\frac \eta2})} \leq C.$$\end{lem}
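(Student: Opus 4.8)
\textbf{Proof proposal for Lemma \ref{lemma-qzqy}.}

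The plan is to show that $q_z$ and $q_y$ each satisfy a linear degenerate equation of the type covered by Theorem [DL], with coefficients that are bounded and measurable and with a drift term bounded below by a positive constant, and then invoke the interior H\"older estimate of that theorem on the box $\mathcal B_\eta$. First I would differentiate the fully nonlinear equation \eqref{eqn-q} for $q$ with respect to $z$ and then with respect to $y$. In each case the third order terms organize themselves, after using \eqref{eqn-q} itself to eliminate redundant combinations, into the linearized operator $L_q$ from \eqref{eqn-tilq} applied to $q_z$ (respectively $q_y$), plus a lower order right hand side coming from the derivatives $H_z$, $H_y$ of $H$ and from terms like $q_z^{-5}$ times first and second derivatives of $q$. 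In the notation of Lemma \ref{lemma-boundsA}, this operator is exactly
$$
L(\tilde q) = z\, \alpha_{11}\, \tilde q_{zz} + 2\sqrt z\, \alpha_{12}\, \tilde q_{zy} + \alpha_{22}\, \tilde q_{yy} + b\, \tilde q_z + c\, \tilde q,
$$
and Lemma \ref{lemma-boundsA} already supplies, on $\mathcal B_\eta$ with $\eta \le \eta_0$, the two-sided bound \eqref{eqn-matrix5} on $(\alpha_{ij})$ and the bound \eqref{eqn-coeffb} $0 < c \le b \le c^{-1}$, so that $b \ge \nu >0$ in the sense required by Theorem [DL]. The zero order coefficient $c$ and the right hand side $H_z$ (resp. $H_y$) are bounded by $\|g\|_{C^2_{\partial\Omega}}, \|h\|_{C^2}, \theta, \lambda, \rho$ via Theorem \ref{theorem-c-2}; in particular the right hand side lies in $L^2(\mathcal B_\eta, d\mu)$ with $d\mu = z^{\nu/2 - 1}\, dz\, dy$ since it is bounded and the measure is finite. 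Then Theorem [DL] gives, for $r < \eta/2$,
$$
\|q_z\|_{C^\alpha_s(\mathcal B_r)} \le C\left( \|q_z\|_{C^\circ(\mathcal B_\eta)} + \Big(\int_{\mathcal B_\eta} (H_z)^2\, d\mu\Big)^{1/2}\right),
$$
and the same for $q_y$; the sup norms $\|q_z\|_{C^\circ}$, $\|q_y\|_{C^\circ}$ are controlled because $q_z = 1/g_x$ and $q_y = -g_y/g_x$ are bounded above and below away from zero by \eqref{eqn-grad0}. Taking $r = \eta/2$ yields the claimed estimates.

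There is one technical point that I expect to be the genuine obstacle: a priori $q$ is only known to be $C^{2}_s$ up to the fixed boundary $\{z=0\}$ (coming from the $C^2_s$ and $C^4$ hypotheses on $g$ in $\overline{\Omega(g)}$), so the differentiated equations for $q_z$ and $q_y$ hold classically only in the \emph{interior} of $\mathcal B_\eta$, and the right hand sides involve third derivatives of $q$ that are not obviously in $L^2(d\mu)$ up to $z=0$. The standard way around this, as in \cite{DH2}, is to first establish the needed integrability of the third derivatives against $d\mu$ — equivalently an energy-type bound — or, alternatively, to run the argument on the translated boxes $\mathcal B_\eta \cap \{z \ge \delta\}$, obtain $\delta$-independent H\"older bounds, and pass to the limit $\delta \to 0$; the a priori pointwise bounds of Theorem \ref{theorem-c-2}, rewritten in the $(z,y)$ variables via $z\,\alpha_{ij}$, $\sqrt z\, \alpha_{12}$ being bounded, are precisely what make the passage to the limit work. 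A second, smaller point is verifying that after differentiating \eqref{eqn-q} the coefficient of $\tilde q_{yy}$ really is $\alpha_{22} = q_z^{-4}(\theta q_z - z q_{zz})$ and that no uncontrolled term $z^{-1}$ or $z^{-1/2}$ survives; this is a direct but slightly lengthy calculation which I would carry out using the explicit form \eqref{eqn-tilq} of $L_q$ and the identity \eqref{det}.

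Finally, I would remark that the argument is local around an arbitrary free-boundary point $P_0$ satisfying \eqref{nvec2}, and that the constants $\alpha$, $\eta$, $C$ are uniform in $P_0$ because all the inputs — Theorem \ref{theorem-c-2}, Lemma \ref{lemma-boundsA}, and Theorem [DL] — produce constants depending only on $\|g\|_{C^2_{\partial\Omega}}, \|h\|_{C^2}, \theta, \lambda, \rho$. This uniform $C^\alpha_s$ control of $q_z$ and $q_y$ is exactly the input needed to bootstrap, in the next step, to $C^\alpha_s$ bounds on $z\,q_{zz}$, $\sqrt z\, q_{zy}$, $q_{yy}$ and hence to the full $C^{2+\alpha}_s$ estimate of Theorem \ref{theorem-c2a}.
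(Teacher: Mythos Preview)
Your argument for $q_y$ is correct and matches the paper: differentiating \eqref{eqn-q} in $y$ gives exactly the linearized equation \eqref{eqn-qy}, with bounded right-hand side $-H_y$ and bounded zero-order coefficient, so Theorem~[DL] applies directly.

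The genuine gap is in your treatment of $q_z$. You assert that after differentiating \eqref{eqn-q} in $z$ the equation takes the form $L_q(q_z)=$ bounded right-hand side, and you flag as a ``smaller point'' the verification that no uncontrolled term survives. In fact an uncontrolled term \emph{does} survive: because the operator in \eqref{eqn-q} depends explicitly on $z$, differentiation in $z$ produces, beyond the linearization, an extra contribution $-\det D^2 q/q_z^4$. After absorbing what can be absorbed into the drift (the coefficient becomes $b_1$ with $(3\theta+1)$ in place of $3\theta$), the residual term is
\[
\frac{q_{zy}^2}{q_z^5},
\]
which appears on the left of the paper's equation \eqref{eqn-tilqz}. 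The a~priori bounds in Theorem~\ref{theorem-c-2} control only $\sqrt{z}\,q_{zy}$, so $q_{zy}^2$ can blow up like $z^{-1}$ near $z=0$; this term is neither bounded nor in $L^2(d\mu)$, and Theorem~[DL] cannot be invoked on the resulting equation. Your proposed cures (translating away from $z=0$ and passing to the limit, or proving an energy bound) do not help here, since the obstruction is a genuinely unbounded source term, not a regularity-up-to-the-boundary issue.

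The paper's fix exploits the \emph{sign} of the bad term: since $q_{zy}^2/q_z^5\ge 0$, the function $q_z$ is a \emph{super}-solution of the clean equation $L_1(\tilde q)\le H_z\,\tilde q$. One then constructs complementary sub-solutions (suitable powers $(q_z-m)^\beta$, $q_z^\beta$) and a further super-solution $M^\beta-q_z^\beta$, and combines the Harnack inequality and local maximum principle from \cite{DL2} (Theorems~3.6 and~3.7), following the scheme of Lemma~5.9 in \cite{DL1}, to extract the H\"older bound on $q_z$. This asymmetry between $q_y$ and $q_z$ is the essential content of the lemma that your proposal misses.
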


\begin{proof}
We will first establish the bound for $\tilde q = q_y$.
Differentiating equation \eqref{eqn-q} with
respect to $y$ we find that  $\tilde q= q_y$ satisfies the  equation
$ L_q(\tilde q) = \tilde H$ with $\tilde H = - \partial_y H$, with $L_q$ given by \eqref{eqn-tilq}.
Since  $\partial_y H = h_y + h_x \, q_y$,  using  the notation
$$H_y(z,y) = h_y(x,y) \quad {and} \quad H_z(z,y) = h_x(x,y), \qquad x=q(x,y)$$ we conclude that $\tilde q$ satisfies the equation
\begin{equation}\label{eqn-qy}
z\, \al_{11} \, \tilde q_{zz} + 2 \sqrt z \al_{12} \, \tilde q_{zy} + \al_{22}\, \tilde q_{yy} + b \, \tilde q_z  + c\, \tilde q = - H_y
\end{equation}
with $\alpha_{ij}$ and $b$  given by \eqref{eqn-matrix50} and  \eqref{coeff}  respectively
and $c=h_x(x,y)=H_z(z,y)$. In addition,  Lemma \ref{lemma-boundsA} and our conditions on the function $h$,   imply that equation
\eqref{eqn-qy} satisfies all the assumptions of our
 $C^\alpha$-regularity result, Theorem [DL].  Hence, there exists
a number $\alpha$ in $0 < \alpha <1$, such that the H\"older norm
$\|\tilde q\|_{{C^{\alpha}_s({\mathcal B}_{\frac \eta
2})}}$ is bounded in terms of $\|\tilde h\|_{C^0({\mathcal B}_{\eta })}$
and $\|H_y\|_{C^0({\mathcal B}_{\eta })}$.
Since $\|\tilde q\|_{C^0({\mathcal B}_{\eta })}$  is
uniformly bounded, the bound $\| q_y\|_{C^\alpha_s(\mathcal B_{\frac \eta2})} \leq C$ readily follows from our assumptions on the function $h$.

We will now establish the $C^\alpha_s$ bound for $\tilde q= q_z$.
Differentiating equation \eqref{eqn-q} with
respect to $z$ we find that  $\tilde q= q_z$ satisfies the equation
\begin{equation}\label{eqn-tilqz}
\begin{split}
&\frac { z \, q_{yy} \tilde q_{zz} -  2 z\,  q_{zy} \, \tilde
q_{zy} + ( z\, q_{zz} - \theta \, q_z )\, \tilde q_{yy}}
{q_z^4}  \\ &  \qquad \qquad \frac {4\, z\, {\det} D^2  q  - (3 \theta +1) \, q_z \, q_{yy} }
{q_z^5}  \, \tilde q_z + \frac{q_{zy}^2}{q_z^5} = H_1
\end{split}
\end{equation}
with $H_1 = \partial_z H = h_x\, q_z = H_z\, q_z$.
We wish to apply  the regularity Theorem [DL] shown in  \cite{DL2} to control the
$C^\alpha_s$ norm of $\tilde q=q_z$. However,
our a-priori bounds in Theorem \ref{theorem-c-2}   do not imply that the term  $
{q_{zy}^2}/{q_z^5}$ is bounded, since the bounds  \eqref{eqn-matrix5}
only  control  $\sqrt{z}\, h_{zy}$.

To control the $C^\alpha_s$ norm of $h_z$, we will apply Theorems
3.6 and Theorem 3.7 in \cite{DL1} on certain  super-solutions and
sub-solutions  of equation \eqref{eqn-tilqz}.

We begin by noticing that since the  term  ${q_{zy}^2}/{q_z^5}$
is nonnegative, \eqref{eqn-tilqz} implies that $\tilde q =q_z$
is a super-solution of equation
\begin{equation}\label{eqn-tilqz2}
\begin{split}
&\frac { z \, q_{yy} \tilde q_{zz} -  2 z\,  q_{zy} \, \tilde
q_{zy} + ( z\, q_{zz} - \theta \, q_z )\, \tilde q_{yy}}
{q_z^4}  \\ &  \qquad \qquad \frac {4\, z\, {\det} D^2  q  - (3 \theta +1) \, q_z \, q_{yy} }
{q_z^5}  \, \tilde q_z \leq   H_1.
\end{split}
\end{equation}
Let us denote  by  $( a_{ij})$ the matrix in \eqref{eqn-matrix50} and  by
\begin{equation}\label{eqn-b1}
b_1 := \frac {4\, z\, {\det} D^2  q  - (3 \theta +1) \, q_z \, q_{yy} }
{q_z^5}
\end{equation}
and set
\begin{equation}
L_1 (\tilde q) :=   z  a_{11}  \tilde
q_{zz} +  2
\sqrt z\, a_{12} \, \tilde q_{zy} +  a_{22} \,  \tilde q_{yy}
+  b_1\,  \tilde q_z.
\label{eq-linear}
\end{equation}

A similar argument  to that  used in the proof of  \eqref{eqn-coeffb} shows that  $b_1$  satisfies  the bounds
\begin{equation}\label{eqb-boundsb1}
 c < b_1< c^{-1}, \qquad \mbox{on}\,\, \mathcal B_\eta
 \end{equation}
 with $c=c(\|g\|_{C^2_{\partial \Omega}}, \|h\|_{C^2}, \theta, \lambda, \rho\, ) >0$.

Following very similar  computations to those in  the proof of Lemma 5.9 in in \cite{DL1}, we conclude:

\begin{itemize}
\item $\tilde q = q_z$ is a super-solution of equation
$$\label{eqn-G1}
 L_1(\tilde q) \leq  \tilde H_1, \qquad \mbox{on} \,\, \mathcal B_\eta
$$
with $\tilde H_1 = H_z  \, \tilde q$.

\item  There exists a number $\beta  >1$, depending only
on the a priori bounds, for which if $(h_z - m) > 0$ on ${\mathcal
B}_\eta$, for some positive constant $m$, then $\tilde q_2:=(h_z
-m)^\beta$  is a sub-solution of the equation
$$\label{eqn-H2}
L_1 (\tilde q_2) \geq H_2.
$$
\item  There exists a number $\beta >1$, depending only
on the a priori bounds, so that   $\tilde q_3:=h_z^\beta$  is a
sub-solution of the equation
$$\label{eqn-H3}
 L_1 (\tilde q_3) \geq  H_3.
$$

\item  There exists a number $\beta >1$, depending only on the a  priori bounds,
so that  for any constant $M$,  $\tilde q_4:=(M^\beta
-h_z^\beta)$  is a super-solution of the equation
$$\label{eqn-G4}
L_1 (\tilde q_4)  \leq  H_4.
$$
\end{itemize}
It can be shown,   as in the proof of Lemma 5.9 in \cite{DL1},  that the functions $H_i$, $i=1,..,4$ satisfy the bounds
$$ \|H_i\|_{L^\infty(\mathcal B_\eta)} \leq C(\|g\|_{C^2_{\partial \Omega}}, \|h\|_{C^2}, \theta, \lambda, \rho\, ).$$

The H\"older regularity of the function $\tilde h=h_z$ on ${\mathcal B_\eta}$ follows by 
combining  the above  with the Harnack estimate,
Theorem 3.6,  and the local maximum principle, Theorem 3.7  in \cite{DL2},
along the lines of the proof of Lemma 5.9 in \cite{DL1}. This yields to  the bound  $\| q_z\|_{C^\alpha_s(\mathcal B_{\frac \eta2})} \leq C(\|g\|_{C^2_{\partial \Omega}}, \|h\|_{C^2}, \theta, \lambda, \rho\, )$.
\end{proof}

We will next combine  Lemmas \ref{lemma-boundsA} and  \ref{lemma-qzqy}
with the classical  regularity results for strictly
elliptic  linear and fully nonlinear equations,  to obtain the
$C^{2,\alpha}_s$ regularity of the solution $q$ on the box
$\mathcal B_\eta$ defined by \eqref{eqn-box} around  the boundary point $Q_0=(0,y_0,t_0)$, where Lemma \ref{lemma-boundsA}  holds.

Let $Q^r=(r^2,y_r)$ be a point in $\B_\eta$, where the index $r$
indicates that the $z$ coordinate of $Q_r$ is of distance $r^2$ from the
boundary $z=0$. For $0 < \mu < 1$,   denote by $D_\mu$ the disk
$D_\mu  = \{ \,  z^2 + y^2  \leq \mu^2\,  \}$.
Define the dilation  $q^r$ of $q$ on $D_\mu$, namely the function 
$$q^r(z,y) := \frac{q(r^2 + r^2 z, y_r + r\, y)}{r^2}.$$
A direct computation shows that
the function $q^r$ satisfies the  equation
\begin{equation}\label{eqn-qr}
\frac { - \tilde z\, {\det} D^2  q^r  +  \theta \, q_z^r \, q_{yy}^r}
{(q_z^r)^4} = - H^r
\end{equation}
with $\tilde z = 1+z$ and $H_r(z,y)=H(r^2 + r^2 z, y_r + r\, y)$.

When $P=(z,y) \in D_\mu$, with $0< \mu < 1$, then $\tilde z \geq
1-\mu^2 >0$. It follows by the bounds of Lemma \ref{lemma-boundsA} and
the bound $0 < \lambda \leq H \leq \lambda^{-1}$,  that
\eqref{eqn-qr} is uniformly elliptic  on $D_\mu$.
Hence,  by the known results on the regularity of solutions to   strictly elliptic fully-nonlinear equations (see in \cite{CC}), one obtains  uniform $C^\infty$ bounds for $q^r$
on $D_\mu$, in terms of $\|q^r\|_{L^\infty(D_{\mu_0})}$, for
any $0< \mu < \mu_0 <1$. Notice that, in addition,
$\|q^r\|_{L^\infty(D_{\mu_0})}$ is uniformly bounded,  since $q_z$ is
bounded in $\B_\eta$. The above discussion leads to the following lemma:

\begin{lem}\label{lemma-infty}
For any $0 < \mu_0 <1$, there exists a constant $C(\mu_0)$ depending also on $ \|g\|_{C^2_{\partial \Omega}}, \|h\|_{C^2}, p, \lambda$ and $\rho$, such that
$$\|q^r\|_{C^\infty_s(D_\mu)} \leq C(\mu_0)$$
for all $0 < \mu < \mu_0$.
\end{lem}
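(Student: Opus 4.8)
The plan is to exploit the fact that, after the dilation $q^r(z,y) = r^{-2}\, q(r^2 + r^2 z, y_r + r\, y)$, the singular coordinate $z$ in the original box $\mathcal B_\eta$ is replaced by $\tilde z = 1 + z$, which is bounded away from zero on any disk $D_\mu$ with $\mu < 1$. This turns the degenerate fully nonlinear equation \eqref{eqn-q} into the uniformly elliptic equation \eqref{eqn-qr} on $D_\mu$, at which point the classical interior regularity theory applies. Concretely, I would proceed as follows. First, verify by direct substitution that $q^r$ solves \eqref{eqn-qr} with $\tilde z = 1+z$ and $H^r(z,y) = H(r^2 + r^2 z,\, y_r + r\, y)$; this is the routine chain-rule computation already quoted in the excerpt, using that $(q^r)_z = q_z(r^2+r^2z, y_r+ry)$ and $(q^r)_y = r^{-1} q_y(r^2+r^2z, y_r+ry)$, so the homogeneity of the operator in \eqref{eqn-q} is preserved.

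Second, I would record the structural bounds that make \eqref{eqn-qr} uniformly elliptic on $D_\mu$. On $D_\mu$ we have $\tilde z \geq 1 - \mu^2 > 0$, and by Lemma \ref{lemma-boundsA} (applied at the point $Q^r$, which lies in $\mathcal B_\eta$ for $r$ small) the coefficient matrix $\mathcal A$ in \eqref{eqn-matrix50} and the first-order coefficient $b$ in \eqref{coeff} satisfy the two-sided bounds \eqref{eqn-matrix5} and \eqref{eqn-coeffb}. Together with $0 < \lambda \leq H \leq \lambda^{-1}$ from \eqref{eqn-lambda}, these give ellipticity constants for the fully nonlinear operator in \eqref{eqn-qr} that are uniform in $r$ and depend only on $\|g\|_{C^2_{\partial \Omega}}, \|h\|_{C^2}, \theta, \lambda, \rho$. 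Moreover $q^r$ is concave (inherited from the concavity of $q$, which comes from convexity of $g$), so the operator acts on a convex/concave function and Evans--Krylov type estimates are available. The $L^\infty$ bound $\|q^r\|_{L^\infty(D_{\mu_0})} \leq C$ follows since $q_z$ is bounded on $\mathcal B_\eta$ by Theorem \ref{theorem-c-2}, hence $q$ has bounded $z$-increments, and the $y$-increments are controlled by the boundedness of $q_y$; after dividing by $r^2$ and accounting for the $r$-scaling in the arguments these remain uniformly bounded.

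Third, I would invoke the interior regularity theory for uniformly elliptic fully nonlinear equations with smooth (here $C^\infty$, once $h \in C^\infty$; but $h \in C^2$ already suffices for the $C^{2,\alpha}$ step and one iterates) dependence on the gradient and on $(z,y)$ through $H^r$ — that is, the Krylov--Safonov, Evans--Krylov, and Schauder bootstrap machinery as presented in \cite{CC} — to conclude that for any $0 < \mu < \mu_0 < 1$ one has $\|q^r\|_{C^{k}(D_\mu)} \leq C(k, \mu, \mu_0)$ for every $k$, with constants depending only on the stated data. Rewriting this in the $\mathcal B_\eta$ variables gives exactly the assertion $\|q^r\|_{C^\infty_s(D_\mu)} \leq C(\mu_0)$. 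The only genuinely delicate point — and the one I would be most careful about — is keeping every constant uniform in $r$ as $r \to 0$: one must check that the point $Q^r$ stays in the region where Lemma \ref{lemma-boundsA} applies (so $\eta$ must be chosen first, then $r$ small), that the ellipticity constants extracted from \eqref{eqn-matrix5}, \eqref{eqn-coeffb}, \eqref{eqn-lambda} genuinely do not degenerate under the dilation, and that the $C^\infty$ regularity of the coefficient $H^r$ comes with $r$-uniform bounds (which it does, since $H^r$ is a rescaled restriction of the fixed smooth function $h$, and the $r$-scaling only improves the relevant seminorms). No other step presents an essential obstacle; everything beyond the uniformity bookkeeping is standard elliptic regularity on the fixed domain $D_\mu$.
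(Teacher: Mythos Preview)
Your proposal is correct and follows essentially the same route as the paper: the paper's argument (given in the paragraph immediately preceding the lemma) is precisely that $\tilde z \geq 1-\mu^2 > 0$ on $D_\mu$, so by Lemma~\ref{lemma-boundsA} and the bounds on $H$ the rescaled equation \eqref{eqn-qr} is uniformly elliptic, and then the regularity theory in \cite{CC} gives uniform $C^\infty$ bounds on $q^r$ in terms of $\|q^r\|_{L^\infty(D_{\mu_0})}$, which is in turn controlled since $q_z$ is bounded on $\mathcal B_\eta$. Your write-up is in fact more careful than the paper's on the bookkeeping of uniformity in $r$ and on the bootstrap from $h\in C^2$ upward, but the underlying argument is identical.
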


One may now combine  Lemma \ref{lemma-infty} with  Lemma \ref{lemma-qzqy}
along the lines of the proof of Lemma 6.8 in \cite{DL1} to establish the $C^\alpha_s$ regularity of $z\, h_{zz}$ and $\sqrt{z} \, h_{zy}$, as stated next:

\begin{lem}\label{lemma-qzz}
Under the assumptions of Theorem \ref{theorem-c2a}, there exists a number $\alpha$ in $0 < \alpha <1$ and  constants $C$, $\eta$
depending only on the data $\|g\|_{C^2_{\partial \Omega}}, \|h\|_{C^2},$
$ p, \lambda, \rho\,$,  such that for any two points $Q_1=(z_1,y_1)$ and $Q_2=(z_2,y_2)$
in $\B_{\frac \eta 2}$, we have
$$
| z_1 q_{zz}( Q_1) - z_2 q_{zz}(Q_2)| +  | \sqrt{z_1}
q_{zz}( Q_1) - \sqrt{z_2}  q_{zz}(Q_2)| \leq C s(Q_1,Q_2)^\al.
$$
\end{lem}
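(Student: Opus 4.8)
The plan is to localize near the free boundary and reduce the estimate to the uniform interior bounds for the rescaled functions $q^r$ from Lemma \ref{lemma-infty}, combined with the global $C^\alpha_s$-regularity of $q_z,q_y$ from Lemma \ref{lemma-qzqy}, along the lines of the proof of Lemma 6.8 in \cite{DL1}. The two second-order quantities to be controlled are $z\,q_{zz}$ and the mixed degenerate derivative $\sqrt z\,q_{zy}$, i.e. precisely the ones entering the norm of $C^{2+\alpha}_s$; it is their $C^\alpha_s$-bound that gives the estimate in the statement. First, on $\{z\ge\delta\}\cap\mathcal B_{\eta/2}$ for a fixed $\delta>0$, equation \eqref{eqn-q} is uniformly elliptic with smooth coefficients, so interior elliptic theory gives $q\in C^\infty$ there with bounds depending only on the data and $\delta$; since there $s$ is comparable to the Euclidean distance, $z\,q_{zz}$ and $\sqrt z\,q_{zy}$ are automatically of class $C^\alpha_s$. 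It remains to estimate the oscillation of these quantities for points at least one of which lies close to $\{z=0\}$.

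For the near-boundary analysis fix a reference point $Q^r=(r^2,y_0)$ — one may keep the second coordinate fixed along all the scales — and recall that under the anisotropic dilation $\Phi_r(z,y)=(r^2+r^2z,\,y_0+ry)$ one has $(q^r)_z=q_z\circ\Phi_r$, while $(z\,q_{zz})\circ\Phi_r=(1+z)\,(q^r)_{zz}$ and $(\sqrt z\,q_{zy})\circ\Phi_r=\sqrt{1+z}\,(q^r)_{zy}$. The two inputs are then: by Lemma \ref{lemma-infty}, the $q^r$ are bounded in $C^\infty(D_\mu)$ for each fixed $\mu<1$, uniformly in $r$; and by Lemma \ref{lemma-qzqy} together with the bound $s(\Phi_r P,\Phi_r P')\le C\,r\,|P-P'|$ valid on $D_\mu$, the rescaled gradient satisfies $[(q^r)_z]_{C^\alpha(D_\mu)}\le C r^\alpha$, so that $q^r$ is affine in $z$ up to an $O(r^\alpha)$-error. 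Writing $q^r=\big(q^r(0,y)+z\,(q^r)_z(0,0)\big)+w$, so that $\|w\|_{C^0(D_\mu)}\le Cr^\alpha$, and inserting this into the linear equation obtained by differentiating \eqref{eqn-qr} in $z$ — a uniformly elliptic equation for $(q^r)_z$ whose coefficients are $C^\alpha$ (indeed smooth, by Lemma \ref{lemma-infty}) and which has no zeroth-order term — one estimates $q^r_{zz}$ and $q^r_{zy}$ on $D_{\mu/2}$; comparing the rescalings at two consecutive dyadic scales, which are related by an explicit change of variables, then shows that $(q^r)_{zz}$ and $(q^r)_{zy}$ vary at the rate $r^\alpha$ with respect to $s$-distance.

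Assembling the local estimates: any $Q_1,Q_2\in\mathcal B_{\eta/2}$ are joined by a chain of the dilated disks $D_\mu$ associated with the dyadic values of $\sqrt z$ between $\sqrt{z_1}$ and $\sqrt{z_2}$, supplemented by the interior estimate on $\{z\ge\delta\}$ when both points are away from the boundary; summing the single-scale bounds and using that the distance $s$ measures lengths through $\sqrt z$ makes the telescoping sum converge to $C\,s(Q_1,Q_2)^\alpha$, with $C$ depending only on $\|g\|_{C^2_{\partial\Omega}},\|h\|_{C^2},p,\lambda,\rho$. This is exactly the asserted estimate for $z\,q_{zz}$ and $\sqrt z\,q_{zy}$.

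The main obstacle is the single-scale estimate: one must pass from the mere boundedness of $z\,q_{zz}$ and $\sqrt z\,q_{zy}$, which follows from Theorem \ref{theorem-c-2}, to genuine Hölder control in the singular metric. A direct application of interior Schauder to the rescaled equation loses a factor $r^{-\alpha}$ coming from the Jacobian factors in the dilation, and this loss must be absorbed using the $C^\alpha_s$-smallness of $\nabla q$ provided by Lemma \ref{lemma-qzqy} — which, through the equation for $q^r$, forces the rescaled second derivatives themselves to have oscillation $O(r^\alpha)$. Making the within-scale and between-scale estimates fit together into a scale-invariant bound is the technical heart of the argument, and is carried out exactly as in the proof of Lemma 6.8 of \cite{DL1}.
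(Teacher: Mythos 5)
Your proposal follows essentially the same route as the paper, whose entire proof consists of combining the uniform interior bounds for the rescaled functions $q^r$ (Lemma \ref{lemma-infty}) with the $C^\alpha_s$ estimates for $q_z,q_y$ (Lemma \ref{lemma-qzqy}) ``along the lines of the proof of Lemma 6.8 in \cite{DL1}''. Your sketch of how that combination works --- interior ellipticity for $z\ge\delta$, the scaling identities $(z\,q_{zz})\circ\Phi_r=(1+z)(q^r)_{zz}$, $(\sqrt z\,q_{zy})\circ\Phi_r=\sqrt{1+z}\,(q^r)_{zy}$, the $O(r^\alpha)$ H\"older seminorm of $(q^r)_z$ absorbing the scaling loss, and the dyadic chaining in the $s$-metric --- is exactly the argument the paper delegates to \cite{DL1}, so the two proofs coincide in substance.
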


Finally,  the  H\"older estimate for $q_{yy}$
can be derived from the H\"older estimates of $q_z, q_y$ and $z\, q_{zz},
\sqrt{z} \, q_{zy}$ and the regularity of $H$.

\begin{lem}\label{lemma-qyy}
Under the assumptions of Theorem \ref{theorem-c2a}, there exists a number $\alpha 
\in (0,1)$ and  constants $C$, $\eta$
depending only on the data $\|g\|_{C^2_{\partial \Omega}}, \|h\|_{C^2}$,
$p, \lambda, \rho\, $, such that for any two points $Q_1=(z_1,y_1)$ and $Q_2=(z_2,y_2)$
in $\B_{\frac \eta 2}$, we have
$$
|q_{yy}( Q_1) - q_{yy}(Q_2)| \leq s(Q_1, Q_2)^\alpha.
$$
\end{lem}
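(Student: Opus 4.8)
The plan is to isolate $q_{yy}$ algebraically from the equation \eqref{eqn-q} and then read off its H\"older continuity from the already-established H\"older bounds on the lower-order and mixed-order quantities. Writing out \eqref{eqn-q} we have
$$
-z\,(q_{zz}q_{yy} - q_{zy}^2) + \theta\, q_z\, q_{yy} = -H\, q_z^4,
$$
so that
$$
q_{yy}\,\bigl(\theta\, q_z - z\, q_{zz}\bigr) = -H\, q_z^4 - z\, q_{zy}^2.
$$
By Lemma \ref{lemma-boundsA}, the coefficient $\theta\, q_z - z\, q_{zz} = q_z^4\,\alpha_{22}$ is bounded above and below by positive constants on $\B_\eta$, and in particular it is bounded away from zero, so we may divide:
$$
q_{yy} = \frac{-H\, q_z^4 - z\, q_{zy}^2}{\theta\, q_z - z\, q_{zz}}
       = \frac{-H\, q_z^4 - \bigl(\sqrt z\, q_{zy}\bigr)^2}{\theta\, q_z - z\, q_{zz}}.
$$
This is the key observation: the numerator involves only $q_z$, the quantity $\sqrt z\, q_{zy}$, and $H$, all of which lie in $C^\alpha_s(\B_{\eta/2})$ — $q_z$ by Lemma \ref{lemma-qzqy}, $\sqrt z\, q_{zy}$ by Lemma \ref{lemma-qzz}, and $H(z,y) = h(q(z,y),y)$ by the $C^2$-regularity of $h$ composed with the (H\"older, indeed Lipschitz) map $q$. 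The denominator $\theta\, q_z - z\, q_{zz}$ lies in $C^\alpha_s$ as well, since $q_z \in C^\alpha_s$ and $z\, q_{zz} \in C^\alpha_s$ by Lemma \ref{lemma-qzz}.

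From here the argument is routine: $C^\alpha_s(\B_{\eta/2})$ is an algebra of bounded functions (products and sums of bounded $C^\alpha_s$ functions are $C^\alpha_s$, with norm controlled by the product/sum of the norms), and the reciprocal of a $C^\alpha_s$ function that is bounded away from zero is again $C^\alpha_s$. Applying these closure properties to the explicit formula for $q_{yy}$ displayed above yields
$$
\|q_{yy}\|_{C^\alpha_s(\B_{\eta/2})} \leq C\bigl(\|g\|_{C^2_{\partial\Omega}}, \|h\|_{C^2}, \theta, \lambda, \rho\bigr),
$$
which gives the claimed estimate $|q_{yy}(Q_1) - q_{yy}(Q_2)| \leq C\, s(Q_1,Q_2)^\alpha$ (after possibly relabelling the constant, or normalizing as in the statement). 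The only point requiring a little care — and the mildest obstacle here — is verifying that $H = h\circ(q,\mathrm{id})$ is genuinely in $C^\alpha_s$: one uses that $x = q(z,y)$ has bounded first derivatives $q_z, q_y$ on $\B_\eta$ (hence $q$ is Lipschitz in the Euclidean sense, and in particular $s$-H\"older since $s$ dominates a power of Euclidean distance near $z=0$), so that $h\in C^1$ composed with $q$ lands in $C^\alpha_s$ with the appropriate norm bound. Everything else is bookkeeping with the algebra structure of $C^\alpha_s$, so no genuinely hard step remains; this lemma is the easy bottom of the bootstrap, collecting the fruit of Lemmas \ref{lemma-qzqy} and \ref{lemma-qzz}.
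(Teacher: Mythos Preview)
Your proposal is correct and follows exactly the approach the paper indicates: the paper states just before the lemma that ``the H\"older estimate for $q_{yy}$ can be derived from the H\"older estimates of $q_z, q_y$ and $z\, q_{zz}, \sqrt{z} \, q_{zy}$ and the regularity of $H$,'' and your proof fills in precisely these details by solving \eqref{eqn-q} for $q_{yy}$ and invoking the algebra properties of $C^\alpha_s$.
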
   

Following an inductive argument as in Theorem 7.3 in  \cite{DH2}, we  can
show higher regularity, as stated next.

\begin{thm}\label{theorem-cka} Assume that $g \in C^{2,\alpha}_s$ is a solution of \eqref{eqn-g2}
which also satisfies the  assumptions of Theorem \ref{theorem-c2a}
and the additional assumption that $h \in C^{k+2}(\Omega)$,   there exist  constants $0 < \alpha < 1$,
$C<\infty$ and $\eta >0$, depending only on the data $\|g\|_{C^2_{\partial \Omega}}, \|h\|_{C^{k+2}}, p, \lambda, \rho$,  such that  for any free-boundary point
$P_0=(x_0,y_0)$, satisfying condition
\eqref{nvec2},  the function $x=q(z,y)$ satisfies
the estimate
$$\|q\|_{C^{k+\alpha}_s(\B_{\eta})} \leq C$$
on  $\B_\eta= \{ \, 0\leq z \leq \eta^2
, \,\, |y-y_0| \leq \eta \,\, \}$ for any positive integer $k$.
\end{thm}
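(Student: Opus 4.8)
The plan is to prove Theorem~\ref{theorem-cka} by induction on $k$, using the base case $k=2$ supplied by Theorem~\ref{theorem-c2a} (equivalently, by Lemmas~\ref{lemma-qzqy}, \ref{lemma-qzz} and \ref{lemma-qyy}, which give $\|q\|_{C^{2+\alpha}_s(\B_\eta)}\le C$). Since the base case is already established, the only genuinely new ingredient is the inductive step: assuming $q\in C^{k+\alpha}_s(\B_\eta)$ with a bound depending on $\|h\|_{C^{k+2}}$, I want to produce the same bound for $q\in C^{k+1+\alpha}_s(\B_{\eta'})$ on a slightly smaller box under the stronger hypothesis $h\in C^{k+3}(\Omega)$. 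The strategy mirrors Theorem~7.3 in \cite{DH2}: differentiate the nonlinear equation \eqref{eqn-q} in the tangential direction $y$ and in the ``good'' direction $z$ as many times as the induction permits, observe that the resulting equation for the higher derivative $\tilde q=\partial^\gamma q$ is a \emph{linear} degenerate equation of the form \eqref{eqn-linear},
\[
z\,\alpha_{11}\,\tilde q_{zz}+2\sqrt z\,\alpha_{12}\,\tilde q_{zy}+\alpha_{22}\,\tilde q_{yy}+b\,\tilde q_z+c\,\tilde q=F_\gamma,
\]
whose coefficients $\alpha_{ij},b,c$ are built out of derivatives of $q$ of order at most $2$ — hence, by the inductive hypothesis, already known to lie in $C^{k-2+\alpha}_s\subset C^\alpha_s$ — and whose right-hand side $F_\gamma$ is a polynomial expression in derivatives of $q$ of order $\le k+1$ and derivatives of $H$ of order $\le k+3$, so it lies in $C^{\alpha}_s(\B_\eta)$ with the claimed bound.

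With this set-up, the two regularity engines already cited do the work. The coefficients satisfy $0<c\le(\alpha_{ij})\le c^{-1}$, $b\ge\nu>0$ by Lemma~\ref{lemma-boundsA} (these bounds only use the $C^2_s$-control of $q$, which is part of the induction hypothesis for every $k\ge2$), so the Schauder estimate Theorem~[DH] applies and gives
\[
\|\tilde q\|_{C^{2,\alpha}_s(\B_r)}\le C\bigl(\|\tilde q\|_{C^\circ(\B_\eta)}+\|F_\gamma\|_{C^\alpha_s(\B_\eta)}\bigr),
\]
for any $r<\eta$. Reading this off for $\tilde q$ ranging over all $\gamma$-derivatives of order exactly $k-1$ in $(z,y)$ — with $z$-derivatives entering through the singular scaling, i.e.\ treating $z\partial_z$ and $\partial_y$ as the admissible vector fields, consistently with the norm on $C^{k,2+\alpha}_s$ — upgrades $q$ from $C^{k+\alpha}_s$ to $C^{k+1+\alpha}_s$ on $\B_r$. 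As in \cite{DH2}, the interior (away from $z=0$) regularity is handled separately by the classical fully-nonlinear theory \cite{CC} applied to the rescaled functions $q^r$ of Lemma~\ref{lemma-infty}, and the two are patched: the dilation bounds of Lemma~\ref{lemma-infty} already give uniform $C^\infty$ control of $q^r$ on fixed interior disks, so the only issue is the matching near $z=0$, which the Schauder step resolves. Finally, translating back from $q$ to $g$ via the identities $g_x=1/q_z$, $g_{xx}=-q_{zz}/q_z^3$, etc., and summing over all free-boundary points $P_0$ (using compactness of $\Gamma(g)$ and the uniformity of $\eta$ from Theorem~\ref{theorem-c-2}), gives $\|g\|_{C^{k+2,\alpha}_s(\overline{\Omega(g)})}\le C_k$; since $g=0$ on $\Gamma(g)$ one recovers that $\Gamma(g)$ is locally a graph of a $C^{k,\alpha}$ function with $C^{k,\alpha}$ curvature, and letting $k\to\infty$ yields that $g$ is $C^\infty$ up to $\Gamma(g)$ and $\Gamma(g)$ is smooth.

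The main obstacle I anticipate is purely bookkeeping rather than conceptual: verifying that after differentiating \eqref{eqn-q} $k-1$ times in the admissible directions, the coefficient matrix $(\alpha_{ij})$ of the top-order part is \emph{unchanged} (it remains $q_z^{-4}$ times the matrix in \eqref{eqn-matrix50}) and the lower-order coefficient $b$ picks up only a shift by an integer multiple of $\theta+\tfrac14\cdot(\text{stuff})$ — more precisely, that each successive $z$-differentiation produces a term like \eqref{eqn-b1} with $b_1=b-(\text{positive const})\cdot q_z\,q_{yy}/q_z^5$, so positivity of the drift is preserved (exactly the computation behind \eqref{eqb-boundsb1}) — and that the source $F_\gamma$ genuinely does not involve derivatives of $q$ of order $>k+1$, which requires that the highest-order terms produced by differentiation always appear inside the linear principal part or are killed by the structure of the Monge--Amp\`ere nonlinearity (the $\det D^2q$ term contributes $z(q_{yy}\partial_{zz}+q_{zz}\partial_{yy}-2q_{zy}\partial_{zy})$ to the linearization, and its higher differentials reproduce the same pattern). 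This is the content of the calculation in Lemma~5.9 and Theorem~7.3 of \cite{DL1}, \cite{DH2}, and I would simply indicate that the present equation \eqref{eqn-q} has the same algebraic shape, so those computations carry over verbatim with $\theta$ in place of the constant there, and refer the reader to those papers for the details rather than reproducing them.
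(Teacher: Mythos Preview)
Your proposal is correct and follows the same route the paper indicates: the paper's proof is a single sentence, ``Following an inductive argument as in Theorem~7.3 in \cite{DH2}, we can show higher regularity, as stated next,'' and you have accurately fleshed out what that induction consists of (differentiate \eqref{eqn-q}, observe that the linearized operator retains the form \eqref{eqn-linear} with coefficients in $C^\alpha_s$ by the inductive hypothesis, and apply the Schauder estimate Theorem~[DH]). One small note: your last paragraph, passing from the bound on $q$ back to $\|g\|_{C^{k+2,\alpha}_s}$ and the smoothness of $\Gamma(g)$, is the content of Theorem~\ref{theorem-c-smooth} rather than Theorem~\ref{theorem-cka} itself, so in the paper's organization that material belongs to the subsequent result.
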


\smallskip

We are  now in position to give the proof of Theorem  \ref{theorem-c-smooth}.
\smallskip

\noindent {\em Proof of Theorem \ref{theorem-c-smooth}}. 
Let $\eta$ denote the uniform constant  in Theorem \ref{theorem-cka}. Consider the
sub-domains
$$ \Omega_{\eta}^*(g)=\{x\in\Omega (g)|\,\, d(x,\Gamma (g))>\eta\}
\quad \mbox{and} \quad  \Omega_{\eta}(g)=\{x\in\Omega (g)|\,\, d(x,\Gamma (g) <\eta\}.$$
The estimate in Theorem \ref{theorem-cka} implies the bound 
$$\|g\|_{C^{k+\alpha}_s(\overline{\Omega_\eta(g)} )} \leq 
C(\|g\|_{C^2_{\partial \Omega}}, \|h\|_{C^2}, p, \lambda, \rho).$$
It remains to show that $g \in C^\infty(\Omega_{\eta}^*(g))$. Indeed, on $\Omega_{\eta}^*(g)$ we have 
$$0<\delta_{0} (\eta)\leq \det D^{2}f =h\, f^{p} \leq C( \lambda,  \max_{\partial\Omega}\vp)$$
for a positive constants $\delta_0$ and $C( \lambda,  \max_{\partial\Omega}\vp)$. 
Hence, $f$ satisfies a  Monge-Amp\'ere equation  as those  considered in  \cite{CKN}.

The bounds in Corollary \ref{cor-bii}  imply  the upper bound on any second derivative $f_{ii}$ on $\Omega_{\eta}^*(g)$, 
and   the lower bound of $f_{ii}$ follows from  the balance of the second derivatives $\det (D^{2}f) \approx 1$ on $\Omega_\eta^*$. Therefore $f$ satisfies a  uniformly elliptic 
equation and $\det^{1/2}(D^{2}f)$ is a concave operator.
Hence, the  $C^\infty$ regularity of  $f$,  satisfying $\det D^2 f = h\, f^p$, 
on $\Omega_{\eta}^*(g)$  follows  from the regularity theory for uniformly convex or concave fully-nonlinear  operators ( \cite{CC} ).\qed

\smallskip

\noindent The {\em proof of Theorem \ref{theorem-c2a} } readily follows from Lemmas \ref{lemma-qzqy}, \ref{lemma-qzz} and \ref{lemma-qyy}.


\section{Stability: $I$ is open}
\label{sec-stability}
In this section, we will utilize the  estimates of previous sections to show  the following stability  of solutions of \eqref{eqn-mapt} in the parameter $t$. This will conclude the proof of the
Theorem \ref{theorem-main}, as discussed in section  \ref{sec-classic2}.  

\begin{thm} Assume that $g_0$ is a classical  solution of \eqref{eqn-mapt} for $t=t_0$, satisfying conditions (H-1)--(H-4) and such that  $\|g_0\|_{C_s^{2,\alpha}}\leq 
C(\|\vp\|_{C^2_{\partial \Omega}}, p, \lambda, \rho\, )$.
Then, there is a $\delta>0$ such that for any $t$ with  $|t-t_0|<\delta$, the problem 
 \eqref{eqn-mapt} admits  a $C_s^{2,\alpha}$-solution $g(\cdot , t)$.
\label{theorem-stability}
\end{thm}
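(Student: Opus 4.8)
The plan is to prove Theorem~\ref{theorem-stability} by the implicit function theorem, working in the scale of $C^{2,\alpha}_s$ spaces and using the degenerate Schauder estimate Theorem~[DH] to invert the linearization. The first step is to reduce the free-boundary problem \eqref{eqn-mapt} to an equation on a \emph{fixed} domain. Since $g_0$ is a classical solution, Theorem~\ref{theorem-c-2} gives $|\nabla g_0|\ge c>0$ near $\Gamma(g_0)$, so a neighborhood of $\Gamma(g_0)$ is covered by finitely many boxes in each of which, after the change of variables $z=g(x,y)$, $x=q(z,y)$ of Section~\ref{sec-c2a}, the equation becomes the fixed-boundary degenerate equation \eqref{eqn-q}, $Lq=-H$ on $\mathcal{B}_\eta$, whose trace at $z=0$ is exactly the free-boundary condition $\theta\,q_{yy}q_z^{-3}=-H$, so that no extra condition is imposed there. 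Gluing these $q$-pieces — by a partition of unity and the overlap identities $q_z=1/g_x$, $q_y=-g_y/g_x$ — to the uniformly elliptic equation for $g$ on the fixed region $\{\,d(\cdot,\Gamma(g_0))>\eta/2\,\}$ with Dirichlet datum $g=\bar\vp$ on $\partial\Omega$, one obtains a single nonlinear map $\mathcal{F}\colon\mathcal{O}\times[0,1]\to Y$, polynomial in the unknown $w$ and its derivatives up to second order with coefficients depending smoothly on $h_t=(1-t)\bar h+t$, such that $\mathcal{F}(w,t)=0$ is equivalent to $w$ encoding a solution of \eqref{eqn-mapt}; here $\mathcal{O}$ is an open neighborhood of $w_0$ (the data attached to $g_0$) in a product of $C^{2,\alpha}$ and $C^{2,\alpha}_s$ spaces, $Y$ is the matching product of $C^\alpha$ and $C^\alpha_s$ right-hand sides together with boundary values on $\partial\Omega$, and $\mathcal{F}(w_0,t_0)=0$.

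The map $\mathcal{F}$ is $C^1$ (in fact $C^\infty$), and its partial derivative $D_w\mathcal{F}(w_0,t_0)$ acts, near each $\Gamma$-box, as the linearized operator $L_{q_0}$ of \eqref{eqn-tilq}, which by Lemma~\ref{lemma-boundsA} has exactly the form \eqref{eqn-linear} with $(\alpha_{ij})$ strictly positive and drift $b\ge\nu>0$; on the interior region it is the linearized Monge-Amp\'ere operator, uniformly elliptic by Corollary~\ref{cor-bii}; and on $\partial\Omega$ it is the restriction map. The heart of the proof — and the step I expect to be the main obstacle — is to show that $D_w\mathcal{F}(w_0,t_0)\colon X\to Y$ is a bounded isomorphism. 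The a priori estimate is assembled globally: in each box, Theorem~[DH] bounds $\|\tilde q\|_{C^{2,\alpha}_s(\mathcal{B}_{r})}$ by the $C^0$ norm of $\tilde q$ plus the $C^\alpha_s$ norm of the right-hand side, with no datum prescribed at $z=0$, which is legitimate precisely because $b\ge\nu>0$ makes $\{z=0\}$ a non-characteristic outflow portion of the boundary; on the interior region the classical Schauder estimate with Dirichlet data on $\partial\Omega$ applies; a covering argument patches these, and the $C^0$ terms are absorbed by the usual compactness-and-uniqueness argument, uniqueness of the homogeneous linear problem following from the strong maximum principle for the degenerate operator (after the standard modification making the zeroth-order coefficient nonpositive), in the spirit of the comparison principle Theorem~\ref{theorem-c-comparison}. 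Surjectivity then follows from the Fredholm alternative, the operator being Fredholm of index zero by the Schauder estimate together with the compact embedding $X\hookrightarrow C^0$. The genuinely delicate point is the consistency of the gluing: one must verify that the interior elliptic solution and the boundary-layer $q$-solutions match to second order across the overlaps, which is handled by the alternating iteration of interior and box solves whose successive corrections contract thanks to the uniform estimates, exactly as in Theorem~8.5 of \cite{DH2}.

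The implicit function theorem then produces a $\delta>0$ and a $C^1$ curve $t\mapsto w(t)$, $|t-t_0|<\delta$, with $\mathcal{F}(w(t),t)=0$ and $w(t_0)=w_0$; reconstructing $g(\cdot,t)$ from $w(t)$ gives a $C^{2,\alpha}_s$ solution of \eqref{eqn-mapt}. It remains to check that $g(\cdot,t)$ stays in the continuation class (H-1)--(H-4): (H-1) is unaffected; strict positivity and strict convexity of $f=(q^{-2/3}g)^q$ on $\Omega(g)$ and convexity of $f$ in $\Omega$ (H-3) are open conditions and persist for $\delta$ small; $g\in C^4$ up to $\Gamma(g)$ (H-4) follows from the bootstrap of Theorem~\ref{theorem-cka}; and the vanishing region stays non-empty with $B_\rho(0)\subset\Lambda(g(\cdot,t))$ (H-2) because, as in Section~\ref{sec-classic2}, $f(\cdot,t)$ is a sub-solution and $\psi$ a super-solution of \eqref{eqn-hhh}, so the comparison principle Theorem~\ref{theorem-c-comparison} forces $f(\cdot,t)\le\psi$ and hence $\{\psi=0\}\subset\{f(\cdot,t)=0\}$. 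Shrinking $\delta$ once more if necessary to preserve all of the above completes the proof.
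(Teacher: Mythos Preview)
Your proposal is correct in spirit and arrives at the same conclusion by the same mechanism---reduce to a fixed-boundary problem and invert the linearization with the degenerate Schauder estimate Theorem~[DH]---but the reduction you choose is genuinely different from the paper's. The paper does \emph{not} patch local hodograph charts; instead it introduces a single global parametrization from a fixed annulus $\mathcal D=\{1\le u^2+v^2\le 2\}$: one picks a reference surface $\mathcal S$ close to the graph of $f_0$, a transverse vector field $\mathcal T$ (parallel to $\{z=0\}$ near $z=0$), and writes every nearby graph as $w(u,v;t)$ via $(x,y,z)=S(u,v)+w\,\mathcal T(u,v)$. All free boundaries are sent to the fixed inner circle $\partial^{\text{in}}\mathcal D$, the problem becomes a single fully nonlinear equation $Mw=0$ on $\mathcal D$ with Dirichlet data on $\partial^{\text{out}}\mathcal D$, and the paper then defers entirely to Theorem~8.4 of \cite{DH2}. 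What the paper's route buys is that the ``genuinely delicate'' gluing step you flag simply disappears: there is one chart, one Banach space, one operator. What your route buys is a more transparent account of the linear theory (the Fredholm structure, the maximum-principle uniqueness, the role of $b\ge\nu>0$ at $z=0$) and an explicit check that (H-1)--(H-4) persist, none of which the paper spells out. Note, however, that your appeal to ``exactly as in Theorem~8.5 of \cite{DH2}'' for the gluing is not quite apt, since \cite{DH2} works in the global annular chart, not with patched hodograph boxes.

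One small correction: your verification of (H-2) is circular as written. Theorem~\ref{theorem-c-comparison} \emph{assumes} $\Omega(g_2)\subset\Omega(g_1)$, i.e.\ $\Lambda(\psi)\subset\Lambda(f(\cdot,t))$, which is precisely what you want to conclude. The right argument is by continuity: the implicit-function construction gives $g(\cdot,t)\to g_0$ in $C^{2,\alpha}_s$ as $t\to t_0$, hence $\Gamma(g(\cdot,t))\to\Gamma(g_0)$, and since $B_\rho(0)\subset\Lambda(g_0)$ with positive distance to $\Gamma(g_0)$ (by the nondegeneracy $|\nabla g_0|\ge c>0$), the containment $B_\rho(0)\subset\Lambda(g(\cdot,t))$ persists for $\delta$ small. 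Only then can one invoke the comparison principle to recover $\Lambda(\psi)\subset\Lambda(f(\cdot,t))$.
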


We will use the corresponding elliptic argument to the parabolic one which was used  in section 8 of  \cite{DH2}. Since the two arguments are quite similar,  we will only outline the proofs, referring 
the reader to \cite{DH2} (see also in \cite{DH1}) for  the details. 

 We pick a smooth surface ${\mathcal S} $, sufficiently close to the  $f_0=(q^{-2/3}\, g_0)^q$, such that its inner boundary $\partial {\mathcal S}$ lies on the $z=0$ plane and its outer boundary is $\partial\Omega$.
Denoting  by ${\mathcal D}$ a ring
    $${\mathcal D}= \{(u,v)\in \R^2: 1\leq u^2+v^2\leq 2\}$$
we let $S:{\mathcal D}\ra\R^2$ be a smooth parameterization for
the surface ${\mathcal S}$ which maps $\partial^{in}{\mathcal
D}=\{(u,v):u^2+v^2=1\}$ to $\partial^{in}{\mathcal S}={\mathcal
S}\cap\{z=0\}$ and $\partial^{out}{\mathcal
D}=\{(u,v):u^2+v^2=2\}$ to $\partial^{out}{\mathcal
S}=\partial\Omega$.
 We can find a smooth vector vector field
$$ {\mathcal T}=\begin{pmatrix} \T_1\\ \T_2 \\ T_3\end{pmatrix}    $$
which is transverse to the surface ${\mathcal S}\cap\{ z\geq \delta\}$
while it is pararell to the $z=0$ plane when $0\leq z\leq\delta$. Now we define the change of coordinate $\vp:{\mathcal D}\ra \R^3$ by
$$\begin{pmatrix} x\\y\\z\\\end{pmatrix}=\vp\begin{pmatrix} u\\v\\w \end{pmatrix}=S\begin{pmatrix}u\\v \end{pmatrix}+w{\mathcal T}\begin{pmatrix} u\\v \end{pmatrix}.$$

Via this coordinate change, the solution $z=f(x,y;t)$ of (MAt) will be mapped onto the graph
$$\left\{ \begin{pmatrix} u \\ v \\ w(u,v;t) \end{pmatrix} :\begin{pmatrix} u \\ v \end{pmatrix}
\right\}$$
if $z=f(x,y;t)$ is close to the surface ${\mathcal S}$.
By the choice of the parameterization $S$ of ${\mathcal S}$, we have $$(u,v)\in\partial^{in} {\mathcal D}\quad\text{iff}\quad z=0.$$
In the other words, the interfaces $\Gamma (g(x,y;t))=\partial\{(x,y):g(x,y;t)>0\}$
will be always mapped to the fixed boundary $\partial^{in}{\mathcal D}$.

\begin{definition}
We say $g(x,y;t)$ is of class $C^{k,2+\alpha}_s$ if the function $w(x,y;t)$ belongs to the class $C^{k,2+\alpha}_s({\mathcal D})$. Finally, we say that $g(x,y;t)$ are smooth up to the interface $\Gamma (g(x,y;t))$ if $w(u,v;t)$ is smooth on ${\mathcal D}$.
\end{definition}

 In addition, the equation \eqref{eqn-mapt} will be transformed to the boundary value problems
\begin{equation}
\begin{cases}
Mw(u,v;t)=0\quad &(u,v)\in {\mathcal D}\\
w(u,v;t)=\psi (u,v)\quad &(u,v)\in \partial^{out}{\mathcal D}
\end{cases}
\end{equation}
where $\psi(u,v)$ is the function,  uniquely determined by $\vp(x,y)$,  after the change of variables
and $Mw=F(D^2 w,Dw,w,u,v;t)$ is a fully nonlinear equation whose linearized equation at $t=0$
has the form \eqref{eq-linear} satisfying \eqref{eqn-matrix5},\eqref{eqn-coeffb}.

Theorem \ref{theorem-stability} follows  by combining  Theorem 8.4 in \cite{DH2}  and Theorems \ref{theorem-c2a} and  \ref{theorem-cka}.

 {\bf Acknowlegement:}
Panagiota Daskalopoulos is partially supported by the NSF grant
DMS-0604657. 
Ki-Ahm Lee was supported by the Korea Science and Engineering Foundation(KOSEF) grant funded by the Korea government(MOST) (No. R01-2006-000-10415-0  )



\begin{thebibliography}{00}


\bibitem[A]{A}A.D. Alexandrov; 
{\it Existence and uniqueness of a convex surface with a given integral
curvature,}  Dokl. Acad. Nauk Kasah SSSR {\bf 36} (1942),  pp. 131--134.



\bibitem[Au]{Au} Thierry Aubin;  {\it  Some nonlinear problems in Riemannian geometry}, Springer Monographs in Mathematics. Springer-Verlag, Berlin, (1998). 

\bibitem[C1]{C1} L.A.  Caffarelli;
{\it Some regularity properties of solutions of Monge Amp\'ere  equation.}  Comm. Pure Appl. Math.  {\bf 44}  (1991),  no. 8-9, pp. 965--969.


\bibitem[CC]{CC} L.A.  Caffarelli, X. Cabr\'e; {\it Fully Nonlinear Elliptic equations} 
AMS  {\bf  43}  (1991). 

\bibitem[CKN]{CKN} L.A. Caffarelli, L.  Nirenberg  and J. Spruck  
{\it The Dirichlet problem for nonlinear second
order elliptic equations I. Monge-Amp\'ere equation}
Comm. Pure Appl. Math. {\bf 37}  (1984) 
pp. 369--402.

\bibitem[CVW]{CVW}  L.A. Caffarelli,  J.L. V\'azquez, N.I. Wolanski,
Lipschitz continuity of solutions and interfaces of the $N$-dimensional porous medium equation, {\it Indiana Univ. Math. J.} {\bf 36} (1987), no. 2-3, pp.  373--401.

\bibitem[CW]{CW} L.A. Caffarelli, N.  Wolanski; 
{\it $C\sp {1,\alpha}$ regularity of the free boundary for the $N$-dimensional
 porous media equation}, 
Comm. Pure Appl. Math.,  {\bf 43} (1990)  no. 7, pp. 885--902.



\bibitem[CY] {CY} S.Y. Cheng, S.T. Yau;  {\it  On the regularity of the Monge-AmpÂre equation ${\rm det}(\partial \sp{2}u/\partial x\sb{i}\partial sx\sb{j})=F(x,u)$},  Comm. Pure Appl. Math. {\bf 30}  (1977) no. 1, pp.  41--68.

\bibitem[CY2]{CY2} S.Y. Cheng and S.T. Yau; 
{\it The real Monge-AmpÂre equation and affine flat structures.  Proceedings of the 1980 Beijing Symposium on Differential Geometry and Differential Equations}, {\bf 1,2,3}  (Beijing, 1980) pp. 339--370, Science Press, Beijing, 1982. 

\bibitem[DH1]{DH1}
P. Daskalopoulos and  R. Hamilton; {\it The Free Boundary for the
N-dimensional  Porous Medium Equation},
Internat. Math. Res. Notices {\bf 17}, (1997)  pp. 817-831.


\bibitem[DH2]{DH2}
P. Daskalopoulos and  R. Hamilton;
{\it The Free Boundary on the Gauss Curvature Flow
with Flat Sides},  J. Reine Angenw. Math.,
no 510, (1999) pp. 187-227.



\bibitem[DL1]{DL1}
P. Daskalopoulos and  K. Lee;  {\it Worn stones with flat Sides: all time regularity of the interface},  Invent. Math. {\bf 156} (2004), no. 3, pp. 445--493. 

\bibitem[DL2]{DL2}
P. Daskalopoulos and  K. Lee;  {\it H\"older Regularity of Solutions to  Degenerate Elliptic and Parabolic Equations},  J. Funct. Anal.
(2002), no. 5, pp. 633--653.

\bibitem[DL3]{DL3}
P. Daskalopoulos and  K. Lee;
{\it Free-boundary regularity on the focusing problem for the Gauss curvature flow with flat sides}, 
Math. Z. {\bf  237 } (2001),  no. 4, pp. 847--874.
  
\bibitem[DL4]{DL4}
P. Daskalopoulos and  K. Lee;  {\it A parabolic approach to eigenvalue problems for Fully Degenerate Monge Amp\'ere
equations}, in preparation. 


\bibitem[K]{Ko} H. Koch;
{\it Non-Euclidean Singular Integrals and the Porous Medium Equation}, 
 Habilitation thesis, University of Heidelberg, (1999).

\bibitem[P]{P} Pengfei Guan, 
{\it $C^2$ A Priori Estimates for Degenerate Monge-Ampere Equations, }
Duke Mathematical Journal, 86, (1997), pp. 323--346.

\bibitem[GT]{GT} D. Gilbarg and N.S. Trudinger;
{\it Elliptic Partial Differential Equations of Second Order},
 Classics in Mathematics. Springer-Verlag, Berlin, (2001).


\bibitem[GTW]{GTW}  P. Guan,  N.S.  Trudinger,  X-J. Wang;
{\it On the Dirichlet problem for degenerate Monge-Amp\'ere equations}  
Acta Math. 182 (1999), no. 1, pp. 87--104. 

\bibitem[G]{G} C.E. Guti\'errez,;  {\it The Monge Amp\'ere equation}, Progress in Nonlinear Differential Equations and their Applications, {\bf 44}, Birkh\"auser, Inc., Boston, MA, (2001). 

\bibitem[Li]{Li} P.-L. Lions; 
{\it Two remarks on Monge-Amp\'ere equations.}
  Ann. Mat. Pura Appl. (4)  {\bf 142 } (1985), pp. 263--275. 
  


\bibitem[S] {S} O. Savin; {\it The obstacle problem for Monge Amp\'ere equation.  Calc. Var. Partial Differential Equations} {\bf   22}   (2005),  no. 3, pp. 303--320. 

\end{thebibliography}
\end{document}